\crefname{hypothesis}{Hypothesis}{Hypotheses}
\title{An Inexact Feasible  Interior Point Method for Linear Optimization with High Adaptability to Quantum Computers \thanks{Submitted to the editors DATE.
\funding{This work is supported by Defense Advanced Research Projects Agency as a part of the project W911NF2010022: {\em The Quantum
Computing Revolution and Optimization: Challenges and Opportunities}.}
}
}
\author{Mohammadhossein Mohammadisiahroudi\thanks{Quantum Computing and Optimization Lab, Industrial and System Engineering Department, Lehigh University, Bethlehem, PA, USA \email{(mom219@Lehigh.edu, fakhimi@lehigh.edu, zew220@lehigh.edu, terlaky@lehigh.edu)}. }
\and Ramin Fakhimi\footnotemark[2]
\and Zeguan Wu\footnotemark[2]
\and Tam\'as Terlaky\footnotemark[2]}
\begin{document}

\maketitle
\begin{abstract}
The use of quantum computing to accelerate complex optimization problems is a burgeoning research field.
This paper applies Quantum Linear System Algorithms (QLSAs) to Newton systems within Interior Point Methods (IPMs) to take advantage of quantum speedup in solving Linear Optimization (LO) problems.
Due to their inexact nature, QLSAs can be applied only to inexact variants of IPMs.
Existing IPMs with inexact Newton directions are infeasible methods due to the inexact nature of their computations.
This paper proposes an Inexact-Feasible IPM (IF-IPM) for LO problems, using a novel linear system to generate inexact but feasible steps.
We show that this method has $\Ocal(\sqrt{n}L)$ iteration complexity, analogous to the best exact IPMs, where $n$ is the number of variables and $L$ is the binary length of the input data.
Moreover, we examine how QLSAs can efficiently solve the proposed system in an iterative refinement (IR) scheme to find the exact solution without excessive calls to QLSAs.
We show that the proposed IR-IF-IPM can also be helpful in mitigating the impact of the condition number when a classical iterative method, such as a Conjugate Gradient method, or a quantum solver is used at iterations of IPMs.
After applying the proposed IF-IPM to the self-dual embedding formulation, we implement it and present illustrative numerical results. 
\end{abstract}

\begin{keywords}
Quantum Interior Point Method, Inexact Interior Point Method, Linear Optimization,  Quantum Linear System Algorithm.
\end{keywords}

\begin{MSCcodes}
90C51, 90C05, 68Q12, 81P68
\end{MSCcodes}

\section{Introduction}
Recently, major investments have been going into building efficient quantum computers and solving crucial real-world problems.
Starting with Deutsch’s method~\cite{Deutsch1985_Quantum}, quantum computing shows exponential speed-up compared to conventional computers in solving some challenging mathematical problems, such as the integer factorization problem~\cite{Shor1994_algorithms} and unstructured search problem~\cite{Grover1996_fast}.
Due to the wide range of applications of mathematical optimization problems and their intrinsic challenges, many researchers have attempted to develop quantum optimization algorithms, such as the Quantum Approximation Optimization Algorithm (QAOA) for quadratic unconstrained binary optimization
\cite{farhi2014quantum}, quantum subroutines for the simplex method \cite{nannicini2022fast}, Quantum Multiplicative Weight Update Method (QMWUM) for semidefinite optimization (SDO) \cite{augustino2023solving}, and Quantum Interior Point Methods (QIPMs) for linear optimization (LO) problems~\cite{augustino2021quantum,Casares2020_quantum,kerenidisParkas2020_quantum,mohammadisiahroudi2022efficient}. 

QIPMs are structurally analogous to classical Interior Point Methods (IPMs) that use Quantum Linear System Algorithms (QLSAs) to solve the Newton system at each iteration.
Inexact IPMs benefit from the inexact solutions provided by QLSAs.
Mohammadisiahroudi et al.~\cite{mohammadisiahroudi2022efficient} proposed an Inexact Infeasible IPM (II-IPM) to cope with the inexactness of the solution of the Newton system.
Motivated by the efficient use of QLSA in IPMs, we develop an Inexact Feasible IPM (IF-IPM) using a novel system.
The proposed IF-IPM starts from a feasible interior point, and the iterates remain in the interior of the feasible region even with an inexact solution to the proposed system. 
In the quantum version of the proposed IF-IPM, we efficiently use QLSA to accelerate the solution of LO problems.
First, we define the LO problem.
\begin{definition}[Linear Optimization Problem: Standard Form]\label{def: LO}
For vectors $b\in \mathbb{R}^m$, $c\in \mathbb{R}^n$, and matrix $A\in \mathbb{R}^{m\times n}$ with $\rank(A)=m$, we define the primal-dual pair of LO problems as:

\begin{center}
\begin{minipage}{.4\linewidth}
\renewcommand{\theequation}{\text{P}}
\begin{equation}
 \begin{aligned}
    \min\  c^T&x, \\
    {\rm s.t. }\;\;
    Ax &= b, \\
    x &\geq 0,
    \end{aligned}
 \label{P}
\end{equation}
\end{minipage}%
\begin{minipage}{.4\linewidth}
\renewcommand{\theequation}{\text{D}}
\begin{equation}
 \begin{aligned}
    \max \  b^Ty,\ \ & \\
    {\rm s.t. }\;\;
    A^Ty +&s = c,\\
    &s \geq 0,
    \end{aligned}
 \label{D}
\end{equation}
\end{minipage}
\end{center}

\noindent
where $x\in \Rmbb^n$ is the vector of primal variables, and $y\in\Rmbb^m$, $s\in\Rmbb^n$ are vectors of dual variables.
Problem $(P)$ is called the primal problem and $(D)$ is called the dual problem. 
\end{definition}
As we can see in the definition, a common assumption is that $A$ has full row rank. LO problems can also be presented in another form, known as the canonical form.
\begin{definition}[Linear Optimization Problem: Canonical Form]\label{def: LO: conocical}

\vspace{-1em}
\centering
\begin{minipage}{.4\linewidth}
\renewcommand{\theequation}{\text{$\rm P^\prime$}}
\begin{equation}
 \begin{aligned}
    \min\  c'^T&x, \\
    {\rm s.t. }\;\;
    A'x &\geq b', \\
    x &\geq 0,
    \end{aligned}
 \label{Pprime}
\end{equation}
\end{minipage}%
\begin{minipage}{.4\linewidth}
\renewcommand{\theequation}{\text{$\rm D^\prime$}}
\begin{equation}
 \begin{aligned}
    \max \  b'^Ty,\ \ & \\
    {\rm s.t. }\;\;
    A'^Ty &\leq c',\\
    &y \geq 0.
    \end{aligned}
 \label{Dprime}
\end{equation}
\end{minipage}
\end{definition}
The standard and canonical forms are equivalent, and one can derive both forms for any LO problem. 
By finding basic variables for primal problem \eqref{P}, we can derive the canonical form from the standard form. 
In this case, the canonical form has $n'=n-m$ variables and $m'= m$ constraints. 
Observe that matrix $A'$ does not necessarily have full row rank, and possibly one has $m'>n'$.
An LO problem in canonical form can be transformed to standard form just by adding slack variables. We are going to use both forms in this paper. 
However, the default is the standard form, and the reader is notified when the canonical form is used. 
Using the standard form of LO problems, the set of feasible primal-dual solutions is defined as
\begin{equation*}
\mathcal{PD}=\left\{(x,y,s)\in\mathbb{R}^{n} \times \mathbb{R}^m\times\mathbb{R}^n |\ Ax=b,\ A^Ty+s=c, \ (x,s)\geq0\right\}.
\end{equation*}
Then, the set of all feasible interior solutions is
\begin{equation*}
\mathcal{PD}^0=\left\{(x,y,s)\in\mathcal{PD}\ |\ (x,s)>0\right\}.
\end{equation*}
By the Strong Duality theorem, all optimal solutions, if there exist any, belong to the
set $\mathcal{PD}^*$ defined as
\begin{equation*}
\mathcal{PD}^*=\left\{(x,y,s)\in\mathcal{PD} \ | \ x^Ts=0\right\}.
\end{equation*}
Let $\zeta \geq 0$, then the set of $\zeta$-optimal solutions can be defined as
\begin{equation*}
\mathcal{PD}_{\zeta} = \left\{(x,y,s)\in\mathcal{PD} \ \Big|\ \frac{x^Ts}{n}\le \zeta \right\}.
\end{equation*}
Dantzig's Simplex method was the first efficient algorithm to solve LO problems~\cite{dantzig2016linear}.
Klee and Minty~\cite{Klee1972_good} showed that Simplex methods have an exponential worst-case complexity. 
Khachiyan~\cite{Khachiyan1979_polynomial} proposed the Ellipsoid method for solving LO problems with integer input data and presented the first polynomial time algorithm for LO.
Nonetheless, the Ellipsoid method was practically less efficient than simplex methods.
Karmarkar~\cite{Karmarkar1984_New} developed an Interior Point Method (IPM) for solving LO problems with polynomial time complexity.
Following his work, many theoretically and practically efficient IPMs were developed, see e.g.,~\cite{ Roos2005_Interior, Terlaky2013_Interior,Wright1997_Primal}.

A feasible IPM converges to an optimal solution by starting from an interior point and following the so-called central path~\cite{Roos2005_Interior}.
Most of the efficient IPMs are primal-dual methods, meaning that they attempt to satisfy the optimality condition while maintaining both primal and dual feasibility.
To develop our IF-IPM, we use the primal-dual path-following feasible IPM paradigm. 
Assuming that $\mathcal{PD}^0\not = \emptyset$, the central path is defined as
\begin{equation*}
    \mathcal{CP}=\Big\{(x,y,s)\in\mathcal{PD}^0\ \big| \ x_is_i=\mu\  \text{ for }\ i\in\{1,\dots,n\}; \text{ for }\ \mu>0\Big\}.
\end{equation*}
For any $\theta\in[0,1)$, a neighborhood of the central path can be defined as 
\begin{align*}
    \mathcal{N}(\theta)=\Big\{(x,y,s)\in\mathcal{PD}^0\ \big| \|XSe-\mu e\|_2\leq \theta \mu \Big\},
\end{align*}
where $e\in\Rmbb^n$ is the all one vector, and $X$ and $S$ are diagonal matrices of $x$ and $s$, respectively. 
Throughout this paper, we use $\|M\|$ as the 2-norm of matrix $M$, and $\|M\|_F$
as the Frobenius norm of $M$. 
We also use $\tilde{\Ocal}$, which suppresses the polylogarithmic factors in the ``Big-O'' notation.
Subscripts of $\tilde{\Ocal}$ indicate the parameters/quantities occurring in the suppressed polylogarithmic factors.

IPMs can be categorized into two groups: Feasible IPMs and Infeasible IPMs. 
Feasible IPMs (F-IPM) require an initial feasible interior point as a starting point. 
They frequently employ a self-dual embedding (SDE) formulation of the LO problem, where a feasible interior solution can be easily constructed~\cite{Roos2005_Interior}.
Instead, Infeasible IPMs (I-IPMs) start with an infeasible but strictly positive solution.
Theoretical analysis shows that the best time complexity of F-IPMs for LO problems is $\Ocal(\sqrt{n}L)$ where $L$ is the binary length of the input data.
On the other hand, the best time complexity of I-IPMs for LO problems is $\Ocal(nL)$. 
Despite the theoretical advantage of F-IPMs over I-IPMs, both feasible and infeasible IPMs can solve LO problems efficiently in practice~\cite{Wright1997_Primal}.

Recent studies have considered the convergence of IPMs with inexact search directions because of the inherent inexactness of limited, finite precision arithmetic in classical computers. 
First, Mizuno and his colleagues did a series of research on the convergence of II-IPMs~\cite{ MizunoJarre1999_Global, Freund1999_Convergence}. 
Later, Baryamureeba and Steihaung~\cite{Baryamureeba2006_convergence} proved the convergence of a variant of the I-IPM of~\cite{Kojima1993_primal} with an inexact Newton step.
Korzak~\cite{Korzak2000_Convergence} also showed that his proposed II-IPM has a polynomial time complexity.

Several authors studied the use of Preconditioned Conjugate Gradient method (PCGM) in II-IPMs~\cite{Gondzio2009_Convergence, Monteiro2003_Convergence}.
AL-Jeiroudi and Gondzio~\cite{Gondzio2009_Convergence} used the I-IPM of~\cite{Wright1997_Primal} while solving the Augmented system (AS) by a PCGM.
Monteiro and O'Neal ~\cite{Monteiro2003_Convergence} applied a PCGM to the Normal Equation System (NES).
Bellavia~\cite{Bellavia1998_Inexact} studied the convergence of the II-IPM for general convex optimization problems.
Zhou and Toh~\cite{Zhou2004_Polynomiality} developed an II-IPM for the Semidefinite Optimization (SDO) problems.
The best bound for the number of iterations of II-IPMs for LO problems is $\Ocal(n^2L)$.

All proposed inexact versions of IPMs are also infeasible since the inexact solutions to conventional formulations of Newton systems, such as NES and AS, lead to infeasibility. 
Gondzio \cite{gondzio2013convergence} showed that if Newton systems arising in IPMs can be solved inexactly such that feasibility is maintained, IPMs can leverage the best iteration complexity $\Ocal(\sqrt{n}\log(\frac{1}{\epsilon}))$ for quadratic optimization.
To exploit this favorable complexity of feasible IPMs, we introduce a new form of the Newton system for finding an inexact but feasible step and develop an IF-IPM. 
We prove the polynomial-time convergence of the proposed IF-IPM and show a polynomial speedup w.r.t the dimension of the problem, using QLSA to solve the novel system, compared to previous classical and quantum IPMs.
As the proposed IF-QIPM can handle the error and noise of contemporary quantum computers, and the novel system has a better condition number bound, the proposed approach shows a high level of adaptability to quantum computing, which is evidenced by the complexity advantage of the proposed IR-IF-QIPM compared to previous quantum and classical IPMs.
We also explore the efficiency of the proposed algorithm using classical iterative solvers like CGM.

This paper is structured as follows. 
In Section~\ref{sec: IF-IPM}, a novel system is proposed to produce an inexact but feasible Newton step along with developing IF-IPM. 
The characteristics of the novel system are analyzed and compared to other forms of the Newton system in Section~\ref{sec: anal system}. 
Section~\ref{sec: IF-QIPM} explores how to use a QLSA to solve the novel system in order to develop an IF-QIPM.  
In Section~\ref{sec: IF-IPM CGM}, we present the classical counterpart of the proposed IF-IPM using CGMs.
An iterative refinement scheme is designed in Section~\ref{sec: IR} to mitigate the impact of increasing condition number and precision on the total complexity of both IF-QIPM, and also for IF-IPM with CGM.
We adapt the proposed IF-IPM to the SDE formulation of LO problems in Section~\ref{sec: self-dual}. 
Computational experiments are presented in Section~\ref{sec: numerical}, and conclusions are provided in Section~\ref{sec: conclusion}.

\section{Inexact Feasible IPM} \label{sec: IF-IPM}
F-IPMs have the best computational complexity, which can be further enhanced by solving the Newton system with QLSAs at each iteration.
In order to investigate this opportunity, we propose a novel IF-IPM.
At each step of IPMs, there are three choices of linear systems to calculate the Newton step: Augmented system (AS), Normal Equation System (NES), and Full Newton System (FNS).
Solving any of these three systems inexactly leads to residuals in the primal and/or dual feasibility equations.
In this paper, we develop an IF-IPM to avoid the infeasibility caused by residuals.
By constructing a new system that offers a primal-dual feasible step based on a basis of orthogonal subspaces, we avoid the additional cost associated with infeasible IPMs.
With this structure, we utilize short-step feasible IPMs with inexact Newton steps.
%
\subsection{Orthogonal Subspaces System}
For a feasible interior solution $(x,y,s)\in \mathcal{PD}^0$, the Newton system is defined as
\begin{equation*}\label{eq:newton system}\tag{FNS}
\begin{aligned}
    A\Delta x=0,\  A^T \Delta y +\Delta s=0,\ X\Delta s+S\Delta x=\beta \mu e-Xs,
\end{aligned} 
\end{equation*}
where $\beta\in [0,1]$ is the reduction parameter, $\mu=\frac{x^Ts}{n}$, $X= \text{diag}(x)$, and $S= \text{diag}(s)$.

As we can see in \eqref{eq:newton system}, to calculate a feasible step, $\Delta x$ should be in the null space of $A$ and $\Delta s$ should be in the row space of $A$. To maintain feasibility, another way of reformulating the Newton system is to express $\Delta x$ and $\Delta s$ as linear combinations of bases for the null space and the range space of $A$, respectively. In this section, we show how this reformulation can help to maintain feasibility even with errors in the solution of the system.

Let $a_{i}$ be the $i$\textsuperscript{th} column of the matrix $A$.
We define index set $B \subseteq \{1,\dots,n\}$ as the index set of $m$ linearly independent columns of $A$, and $A_B=[a_i]_{i\in B}$.
Since $A$ has full row rank, $m$ linearly independent columns of $A$ do exist.
Thus, matrix $A_B$ is non-singular, and  $A_B^{-1}$ as the inverse of $A_B$ exists.
For ease of exposition, we may assume w.l.g. that the matrix $A_B$ is formed by the first $m$ columns of matrix $A$.
By pivoting on matrix $A=\begin{bmatrix}A_B & A_N\end{bmatrix}$, we can construct matrix $\begin{bmatrix}I& A_B^{-1}A_N\end{bmatrix}\in \mathbb{R}^{m\times n}$. 

We also construct matrices $V \in \mathbb{R}^{n\times (n-m)}$ and $W \in \mathbb{R}^{n\times m}$ as follows
\begin{equation*}
    V=\begin{bmatrix}
A_B^{-1}A_N\\
-I
\end{bmatrix}, \qquad W= A^T.
\end{equation*}
Calculating $V$ requires $\Ocal(mn^2)$ arithmetic operations. 
We can avoid this computational cost if the LO is defined in canonical form. 
In practice, most of the constraints are inequalities, and their slack variables can be used in basis $A_B$, which reduces this preprocessing cost.
In this paper, we neglect the preprocessing cost, since one can avoid preprocessing by using the following reformulation.
\begin{equation*} 
    \begin{aligned}
    \min\  c^Tx,\ {\rm s.t. }\;\; Ax +s'= b,\ -Ax +s''= -b, \  x,s',s'' \geq 0.
    \end{aligned}
\end{equation*}
In this formulation, $s'$ and $s''$ form a basis, and matrix $V$ can be constructed cheaply. 
This formulation has more variables and constraints, but it is negligible in big-O notation. 
This formulation has no interior solution, which is not problematic since we finally apply the proposed framework to the SDE model. 
Vector $w_{j}$ is the $j$\textsuperscript{th} column of matrix $W$ (or the $j$\textsuperscript{th} row of matrix $A$), and vector $v_{i}$ denotes the $i$\textsuperscript{th} column of matrix $V$. 
%

\begin{lemma}\label{lemma: orthogonality}
Vectors $w_{j}$ form a basis for the row space of $A$, and vectors $v_{i}$ form a basis for the null space of $A$. 
Consequently, for any $j\in\{1,\dots,m\} $ and any $i\in\{1,\dots,n-m\}$, we have $w_{j}^Tv_{i}=0$.
\end{lemma}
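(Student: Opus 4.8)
The plan is to verify the three claims — that the columns $w_j$ span the row space of $A$, that the columns $v_i$ span the null space of $A$, and that the two families are mutually orthogonal — essentially by direct computation, exploiting the block structure $A = \begin{bmatrix} A_B & A_N \end{bmatrix}$ with $A_B$ nonsingular.

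First I would handle the row space. Since $W = A^T$, the columns $w_j$ of $W$ are exactly the rows of $A$, so they trivially span $\operatorname{row}(A)$; moreover $\operatorname{rank}(A) = m$ means these $m$ vectors are linearly independent and hence form a basis. That part is immediate.

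Next I would check that each $v_i$ lies in the null space of $A$. Writing $V = \begin{bmatrix} A_B^{-1} A_N \\ -I \end{bmatrix}$, a block multiplication gives
\begin{equation*}
  A V = \begin{bmatrix} A_B & A_N \end{bmatrix} \begin{bmatrix} A_B^{-1} A_N \\ -I \end{bmatrix} = A_B A_B^{-1} A_N - A_N = A_N - A_N = 0,
\end{equation*}
so every column $v_i$ satisfies $A v_i = 0$. To see that the $v_i$ form a basis of $\operatorname{null}(A)$, note first that the presence of the $-I$ block in the bottom $n-m$ rows forces the $n-m$ columns of $V$ to be linearly independent (a nontrivial combination would have a nonzero entry somewhere in the last $n-m$ coordinates). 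Since $\operatorname{rank}(A) = m$, the rank–nullity theorem gives $\dim \operatorname{null}(A) = n - m$, so $n-m$ independent vectors lying in the null space must span it. This establishes the basis claim for $V$.

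Finally, the orthogonality $w_j^T v_i = 0$ for all $j$ and $i$ follows with no extra work: $w_j \in \operatorname{row}(A)$ and $v_i \in \operatorname{null}(A)$, and the row space and null space of any matrix are orthogonal complements in $\mathbb{R}^n$. Alternatively, one can see it in one line from $W^T V = A V = 0$, whose $(j,i)$ entry is precisely $w_j^T v_i$. There is no real obstacle here; the only point that deserves a sentence of care is justifying that the $v_i$ are independent and that dimension counting then upgrades "contained in the null space" to "basis of the null space," and symmetrically that $\operatorname{rank}(A)=m$ is what makes the $w_j$ a basis rather than merely a spanning set.
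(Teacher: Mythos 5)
Your proof is correct and follows essentially the same route as the paper's: the block computation $AV = A_N - A_N = 0$, linear independence of the columns of $V$ via the $-I$ block, and orthogonality read off from $W^TV = AV = 0$. If anything, you are slightly more explicit than the paper about the rank--nullity dimension count that upgrades ``contained in the null space'' to ``basis of the null space,'' which is a welcome bit of care.
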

%

\begin{proof}
Since $A$ has full row rank or equivalently $A^T$ has full column rank, rows of $A$ (vectors $w_j$) form a basis for the range space of $A^T$ or row space of $A$. 
On the other hand, the matrix $V$ has full column rank because the vectors $v_i$ are linearly independent. 
Also, we have
$$W^TV=AV=\begin{bmatrix}
A_B&
A_N
\end{bmatrix}\begin{bmatrix}
A_B^{-1}A_N\\
-I
\end{bmatrix}=A_N-A_N=0.$$
We can conclude that the vectors $v_{i}$ form a basis for the null space of $A$ and $w_{j}^Tv_{i}=0$ for any $j\in\{1,\dots,m\} $ and any $i\in\{1,\dots,n-m\}$.
\end{proof}

%
Based on Lemma~\ref{lemma: orthogonality}, using $\lambda^T= (\lambda_1,\dots,\lambda_{n-m})$, we  reformulate~\eqref{eq:newton system} as 
\begin{subequations}\label{eq:orthogonal system}
\begin{alignat}{2}
    \Delta x&=\sum_{i=1}^{n-m}\lambda_i v_{i}=V\lambda\label{eq:orthogonal system delta x}\\
    \Delta s&=-\sum_{j=1}^{m} \Delta y_j w_{j}=-A^T \Delta y\label{eq:orthogonal system delta s}\\
    X\Delta s+S\Delta x&=\beta\mu e-Xs.\label{eq:orthogonal system xs}
\end{alignat} 
\end{subequations}
Substituting $\Delta x$ defined by equation~\eqref{eq:orthogonal system delta x}, and $\Delta s$ defined by equation~\eqref{eq:orthogonal system delta s} in equation~\eqref{eq:orthogonal system xs} results in
\begin{equation}\label{eq: OSS} \tag{OSS}
\begin{aligned}
    -X A^T\Delta y+SV\lambda &=\beta\mu e-Xs,
\end{aligned} 
\end{equation}
where vectors $\lambda$ and $\Delta y$ are unknown. 
One can rewrite \eqref{eq: OSS} as $M z=\sigma$ where
\begin{equation*}
    M=\begin{bmatrix}
    -XA^T& SV
\end{bmatrix},\qquad z=\begin{pmatrix}
\Delta y\\
\lambda
\end{pmatrix}, \qquad  \sigma=\beta\mu e-Xs.
\end{equation*}
We call this new system the ``Orthogonal Subspaces System" (OSS) which has $n$ equations, $n-m$ variables $\lambda_j$, and $m$ variables $\Delta y_i$. 
After solving~\eqref{eq: OSS}, $\Delta x$ and $\Delta s$ are calculated by \eqref{eq:orthogonal system delta x} and \eqref{eq:orthogonal system delta s}, respectively.
Based on Lemma~\ref{lemma: orthogonality}, we have $\Delta x^T \Delta s =0$.

\begin{lemma}\label{lemma: orthogonal to newton}
The linear systems~\eqref{eq:newton system} and~\eqref{eq: OSS} are equivalent.
\end{lemma}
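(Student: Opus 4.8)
The plan is to establish an explicit correspondence between solutions of the two systems that keeps the $\Delta y$ component, and to check that it sends solutions to solutions in both directions and is invertible. The key facts are already available: by Lemma~\ref{lemma: orthogonality} the columns of $V$ form a basis of the null space of $A$ and $V$ has full column rank, while $A$ has full row rank by assumption, and $AV=0$ was computed in the proof of Lemma~\ref{lemma: orthogonality}.

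First I would take any solution $(\Delta x,\Delta y,\Delta s)$ of~\eqref{eq:newton system}. The equation $A\Delta x=0$ places $\Delta x$ in the null space of $A$, so by Lemma~\ref{lemma: orthogonality} there is a unique $\lambda\in\mathbb{R}^{n-m}$ with $\Delta x=V\lambda$; the equation $A^T\Delta y+\Delta s=0$ gives $\Delta s=-A^T\Delta y$. Substituting these into $X\Delta s+S\Delta x=\beta\mu e-Xs$ yields exactly $-XA^T\Delta y+SV\lambda=\beta\mu e-Xs$, i.e.~\eqref{eq: OSS}, so $(\Delta y,\lambda)$ solves~\eqref{eq: OSS}. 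Conversely, given a solution $(\Delta y,\lambda)$ of~\eqref{eq: OSS}, set $\Delta x:=V\lambda$ and $\Delta s:=-A^T\Delta y$. Then $A\Delta x=AV\lambda=0$, the relation $A^T\Delta y+\Delta s=0$ holds by construction, and the third equation of~\eqref{eq:newton system} is precisely~\eqref{eq: OSS} after the substitution; hence $(\Delta x,\Delta y,\Delta s)$ solves~\eqref{eq:newton system}.

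Finally I would verify that these two maps are mutually inverse, so that the correspondence is a bijection between the solution sets. Passing from~\eqref{eq:newton system} to~\eqref{eq: OSS} and back returns the same $\Delta x=V\lambda$ and $\Delta s=-A^T\Delta y$ and leaves $\Delta y$ untouched; in the other direction, $\lambda$ is uniquely recovered from $\Delta x$ because $V$ has full column rank, and $\Delta y$ is uniquely recovered from $\Delta s$ because $A$ has full row rank. Consequently the two systems have the same solution set up to this identification; in particular, one is solvable if and only if the other is, which is the intended meaning of equivalence.

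The only real obstacle here is making precise in what sense the systems are ``equivalent,'' since they are written in different unknowns ($(\Delta x,\Delta y,\Delta s)$ versus $(\Delta y,\lambda)$); once the correspondence above is fixed, the verification is routine linear algebra, and the rank conditions from Lemma~\ref{lemma: orthogonality} are exactly what guarantee it is well defined and invertible.
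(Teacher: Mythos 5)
Your proof is correct and takes essentially the same route the paper intends: the paper simply remarks that the lemma ``can be verified by following the steps of deriving the \eqref{eq: OSS},'' and your argument is exactly that derivation made explicit as a bijection between solution sets, using Lemma~\ref{lemma: orthogonality} to justify the substitutions $\Delta x=V\lambda$ and $\Delta s=-A^T\Delta y$ and the rank conditions to invert them. Nothing is missing.
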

%
The validity of Lemma~\ref{lemma: orthogonal to newton} can be verified by following the steps of deriving the~\eqref{eq: OSS}.
System~\eqref{eq:newton system} has a unique solution~\cite{Roos2005_Interior},
so an immediate consequence of Lemma~\ref{lemma: orthogonal to newton} is the following corollary. 
%

\begin{corollary}\label{lemma: unique solution}
If $(x, s, y)\in \mathcal{PD}^0$, then the system~\eqref{eq: OSS} has a unique solution.
\end{corollary}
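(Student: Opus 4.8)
The plan is to chain together the two facts already available in the excerpt. First, by a classical result in IPM theory (cited as~\cite{Roos2005_Interior}), whenever $(x,y,s)\in\mathcal{PD}^0$ the Full Newton System~\eqref{eq:newton system} has a unique solution $(\Delta x,\Delta y,\Delta s)$; the key structural reason is that the coefficient matrix of~\eqref{eq:newton system} is nonsingular precisely because $A$ has full row rank and $X,S\succ 0$ on the interior. Second, Lemma~\ref{lemma: orthogonal to newton} asserts that~\eqref{eq:newton system} and~\eqref{eq: OSS} are equivalent systems. Combining these, a solution of~\eqref{eq: OSS} exists and, conversely, any solution $(\Delta y,\lambda)$ of~\eqref{eq: OSS} produces via~\eqref{eq:orthogonal system delta x}--\eqref{eq:orthogonal system delta s} a solution of~\eqref{eq:newton system}, which is unique; hence~\eqref{eq: OSS} has at most one solution. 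Together these give existence and uniqueness.

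The one point that deserves care — and which I expect to be the only real content — is arguing that uniqueness transfers back cleanly, i.e.\ that the map $(\Delta y,\lambda)\mapsto(\Delta x,\Delta s)$ given by $\Delta x=V\lambda$, $\Delta s=-A^T\Delta y$ is injective. For $\Delta s$ this is immediate since $A^T$ has full column rank (equivalently $W=A^T$ has independent columns), so $\Delta y$ is determined by $\Delta s$. For $\Delta x$, Lemma~\ref{lemma: orthogonality} tells us the columns $v_i$ of $V$ are linearly independent, so $V\lambda=V\lambda'$ forces $\lambda=\lambda'$. Thus distinct $(\Delta y,\lambda)$ yield distinct $(\Delta x,\Delta s)$, and since~\eqref{eq:newton system} has a unique such pair,~\eqref{eq: OSS} cannot have two solutions.

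Concretely I would write: suppose $z=(\Delta y;\lambda)$ and $z'=(\Delta y';\lambda')$ both solve~\eqref{eq: OSS}. Define $(\Delta x,\Delta s)$ and $(\Delta x',\Delta s')$ by~\eqref{eq:orthogonal system delta x}--\eqref{eq:orthogonal system delta s}. By construction $A\Delta x=AV\lambda=0$ and $A^T\Delta y+\Delta s=0$ (and likewise for the primed quantities), while~\eqref{eq: OSS} is exactly the third block of~\eqref{eq:newton system}; so both triples solve~\eqref{eq:newton system}. Uniqueness of the FNS solution gives $\Delta x=\Delta x'$ and $\Delta s=\Delta s'$. Full column rank of $A^T$ then yields $\Delta y=\Delta y'$, and linear independence of the $v_i$ (Lemma~\ref{lemma: orthogonality}) yields $\lambda=\lambda'$, so $z=z'$. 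Existence is the forward direction of the same equivalence: taking the unique FNS solution, decompose $\Delta x$ in the basis $\{v_i\}$ of $\mathrm{null}(A)$ — possible since $A\Delta x=0$ — to get $\lambda$, and read off $\Delta y$ from $\Delta s=-A^T\Delta y$; the resulting $z$ satisfies~\eqref{eq: OSS}. There is no genuine obstacle here beyond bookkeeping; the substance is entirely carried by Lemma~\ref{lemma: orthogonality}, Lemma~\ref{lemma: orthogonal to newton}, and the cited uniqueness for~\eqref{eq:newton system}.
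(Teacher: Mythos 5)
Your argument is correct and follows exactly the route the paper intends: uniqueness of the solution of~\eqref{eq:newton system} from~\cite{Roos2005_Interior} combined with the equivalence of~\eqref{eq:newton system} and~\eqref{eq: OSS} from Lemma~\ref{lemma: orthogonal to newton}. The paper states the corollary as an immediate consequence without spelling out the injectivity of the map $(\Delta y,\lambda)\mapsto(\Delta x,\Delta s)$; your additional bookkeeping via the full column rank of $A^T$ and the linear independence of the columns of $V$ (Lemma~\ref{lemma: orthogonality}) is exactly the detail being suppressed, and it is handled correctly.
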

%

Let $(\widetilde{\lambda},\widetilde{\Delta y})$ be an inexact solution of the system~\eqref{eq: OSS}. 
Then, we calculate approximate values $\widetilde{\Delta x}$ and $\widetilde{\Delta s}$ by using~\eqref{eq:orthogonal system delta x} and \eqref{eq:orthogonal system delta s}. 
The approximate solution $(\widetilde{\Delta x},\widetilde{\Delta s},\widetilde{\Delta y})$ satisfies
\begin{equation}\label{eq:inexact orthogonal system}
\begin{aligned}
    \widetilde{\Delta x}&=\sum_{j=1}^{n-m}\widetilde{\lambda_j} v_{j}=V\widetilde{\lambda}\\
    \widetilde{\Delta s}&=-\sum_{i=1}^{m} \widetilde{\Delta y_i} w_{i}=- W \widetilde{\Delta y},\\
    X\widetilde{\Delta s}+S\widetilde{\Delta x}&=\beta\mu e-Xs+r,
\end{aligned} 
\end{equation}
where $r$ is the residual in solving the \eqref{eq: OSS} inexactly. Let $(\lambda, \Delta y)$ represent the exact solution of~\eqref{eq: OSS}, then
$$r=SV(\widetilde{\lambda}-\lambda)-XA^T (\widetilde{\Delta y}-\Delta y).$$
It is important to emphasize that regardless of the error of the solution, we have $\widetilde{\Delta x}\in \text{Null}(A)$ and $\widetilde{\Delta s}\in \text{Row}(A)$. 
Thus, for any step length $\alpha\in (0,1]$, we have 
\begin{equation}\label{eq:feasibility of inexact solution}
\begin{aligned}
    A(x+\alpha \widetilde{\Delta x})&=b,\\
    A^T (y+\alpha \widetilde{\Delta y})+(s+\alpha \widetilde{\Delta s})&=c.
\end{aligned} 
\end{equation}
It implies the inexact Newton step calculated by solving \eqref{eq: OSS}, with appropriate step length, remains in the feasible region. 
This feature of the OSS enables us to develop an IF-IPM in the following section. 
%
%
\subsection{IF-IPM}
To develop a polynomially convergent IF-IPM, we enforce the following bound for the residual of inexact solution,
\begin{equation}\label{eq:residual bound}
    \|r^k\|\leq\eta \mu^k,
\end{equation}
where $\eta$ is an enforcing parameter with $0\leq\eta< 1$.
Let $\epsilon^k$ be the target error of the solution at iteration $k$, such that
$$
\Big\|\begin{pmatrix}\widetilde{\lambda}^k-\lambda^k, & \widetilde{\Delta y}^k-\Delta y^k\end{pmatrix}\Big\|_2\leq \epsilon^k.
$$
Then, we have 
$$\|r^k\|=\|\sigma^k- M^k \ztilde^k\|\leq \|M^k\|\|z^k-\tilde{z}^k\|\leq \|M^k\|\epsilon^k.$$
Thus, to satisfy~\eqref{eq:residual bound}, we need
$\epsilon^k\leq \eta \frac{\mu^k}{\|M^k\|}.$

Algorithm~\ref{alg: IF-IPM} is a short-step IF-IPM to solve LO problems using the general scheme of short-step feasible IPMs \cite{Roos2005_Interior}, in which iterates move in a small neighborhood of the central path while the complementarity gap is reduced by a fixed fraction.
%
\begin{algorithm}[] 
\caption{Short-step IF-IPM } 
\label{alg: IF-IPM}
\begin{algorithmic}[1]
 \STATE Choose $\zeta >0$, $\eta=0.1$, $\theta=0.2$ and $\beta=(1-\frac{0.11}{\sqrt{n}})$. 
 \STATE $k \gets 0$
 \STATE Choose initial feasible interior solution $(x^0, y^0, s^0)\in \mathcal{N}(\theta)$ 
\WHILE {$(x^k,y^k,s^k)\notin \mathcal{PD}_\zeta$}

\STATE $\mu^k \gets \frac{\big(x^k\big)^Ts^k}{n}$\label{alg-step:mu_k}
\STATE $\epsilon^k \gets \eta\frac{\mu^k}{\|M^k\|_2}$\label{alg-step:epsilon_k}
\STATE $(\lambda^k,\Delta y^k) \gets$ \textbf{solve}  (\ref{eq: OSS}) with error bound $\epsilon^k$  \label{alg-step:QLSA}
\STATE $\Delta x^k= V\lambda^k$ and $\Delta s^k=- A^T\Delta y^k$ \label{alg-step:x and s}
\STATE $(x^{k+1},y^{k+1},s^{k+1}) \gets (x^k,y^k,s^k)+(\Delta x^k,\Delta y^k,\Delta s^k)$\label{alg-step:update solution}
\STATE $k \gets k+1$
\ENDWHILE
\RETURN{$(x^k,y^k,s^k)$}
\end{algorithmic}
\end{algorithm}
%
In the next section, we prove the polynomial complexity of IF-IPM.
We also show that the proposed IF-IPM can attain the best iteration complexity $\Ocal(\sqrt{n}L)$ even with an inexact solution of the OSS system.

\subsection{Polynomial Convergence of IF-IPM} \label{sec: Convergence of IF-QIPM}
%
To prove the polynomial convergence of IF-IPM, in Theorem~\ref{theorem: iteration bound}, we show that $\mu^k$, which is a measure of the optimality gap, decreases linearly. 
To do so, Lemma~\ref{lemma: remaining in the neighbor} proves that the IF-IPM remains in the $\Ncal(\theta)$ neighborhood of the central path with a full step at each iteration. 
The main step in Theorem~\ref{theorem: iteration bound} is to show that the IF-IPM finds a $\zeta$-optimal solution after a polynomial number of iterations. 
Finally, we discuss the complexity of IF-IPM to find an exact solution. 
The first step is to demonstrate the correctness of Lemma~\ref{lemma: orthogonality and mu}.

\begin{lemma}\label{lemma: orthogonality and mu}
Let step $\Big(\widetilde{\Delta x}^k,\widetilde{\Delta y}^k, \widetilde{\Delta s}^k\Big)$ be obtained by~\eqref{eq: OSS} at the $k$\textsuperscript{th} iteration of the IF-IPM. 
Then
\begin{subequations}\label{eq: lemma of basics}
\begin{alignat}{2}
    \Big(x^k+\widetilde{\Delta x}{}^k\Big)^T \Big(s^k+\widetilde{\Delta s}{}^k\Big) &\leq \biggfl \beta+\frac{\eta}{\sqrt{n}} \biggfr \big(x^k\big)^Ts^k, \label{eq: compu}\\
    \Big(x^k+\widetilde{\Delta x}{}^k\Big)^T \Big(s^k+\widetilde{\Delta s}{}^k\Big) &\geq \biggfl \beta-\frac{\eta}{\sqrt{n}} \biggfr \big(x^k\big)^Ts^k. \label{eq: compl}
\end{alignat} 
\end{subequations}
\end{lemma}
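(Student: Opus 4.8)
The statement is a two-sided estimate on the duality gap after one full inexact step, so the natural approach is to expand the inner product $\big(x^k+\widetilde{\Delta x}^k\big)^T\big(s^k+\widetilde{\Delta s}^k\big)$ term by term and control each piece using the structural properties of the OSS step recorded in \eqref{eq:inexact orthogonal system}. Dropping the superscript $k$ for readability, I would first write
\[
\big(x+\widetilde{\Delta x}\big)^T\big(s+\widetilde{\Delta s}\big)
= x^Ts + \big(x^T\widetilde{\Delta s}+s^T\widetilde{\Delta x}\big) + \widetilde{\Delta x}^T\widetilde{\Delta s}.
\]
The crucial observation, and the one that makes the OSS construction worthwhile, is that the cross term $\widetilde{\Delta x}^T\widetilde{\Delta s}$ vanishes \emph{regardless} of the solve accuracy: by \eqref{eq:inexact orthogonal system} we have $\widetilde{\Delta x}=V\widetilde{\lambda}\in\mathrm{Null}(A)$ and $\widetilde{\Delta s}=-W\widetilde{\Delta y}=-A^T\widetilde{\Delta y}\in\mathrm{Row}(A)$, and Lemma~\ref{lemma: orthogonality} gives $W^TV=0$, hence $\widetilde{\Delta x}^T\widetilde{\Delta s}=-\widetilde{\lambda}^TV^TA^T\widetilde{\Delta y}=0$.

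Next I would handle the middle term by summing the third (perturbed) equation of \eqref{eq:inexact orthogonal system}, i.e. $X\widetilde{\Delta s}+S\widetilde{\Delta x}=\beta\mu e-Xs+r$, against $e^T$ from the left. Since $e^TX\widetilde{\Delta s}=x^T\widetilde{\Delta s}$, $e^TS\widetilde{\Delta x}=s^T\widetilde{\Delta x}$, $e^TXs=x^Ts$, and $\beta\mu\, e^Te=\beta\mu n=\beta x^Ts$ (using $\mu=x^Ts/n$), this yields
\[
x^T\widetilde{\Delta s}+s^T\widetilde{\Delta x}=(\beta-1)x^Ts+e^Tr.
\]
Substituting the two computations back into the expansion collapses everything to $\big(x+\widetilde{\Delta x}\big)^T\big(s+\widetilde{\Delta s}\big)=\beta x^Ts+e^Tr$, so the entire problem reduces to bounding the scalar $e^Tr$.

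Finally I would estimate $|e^Tr|$ by Cauchy--Schwarz: $|e^Tr|\le\|e\|_2\|r\|_2=\sqrt{n}\,\|r\|$, and then invoke the enforced residual bound \eqref{eq:residual bound}, $\|r\|\le\eta\mu$, together with $\mu=x^Ts/n$, to get $|e^Tr|\le\sqrt{n}\,\eta\mu=\tfrac{\eta}{\sqrt{n}}\,x^Ts$. Combining $\big(x+\widetilde{\Delta x}\big)^T\big(s+\widetilde{\Delta s}\big)=\beta x^Ts+e^Tr$ with $-\tfrac{\eta}{\sqrt{n}}x^Ts\le e^Tr\le\tfrac{\eta}{\sqrt{n}}x^Ts$ gives exactly \eqref{eq: compu} and \eqref{eq: compl}. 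The argument is essentially a bookkeeping computation; the only point that requires care — and the reason the lemma holds at all in the inexact setting — is the orthogonality $\widetilde{\Delta x}^T\widetilde{\Delta s}=0$, which is why the residual enters only through the benign one-dimensional quantity $e^Tr$ rather than contaminating the feasibility relations, so I do not expect a genuine obstacle beyond stating these steps cleanly.
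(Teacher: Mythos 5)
Your proposal is correct and follows essentially the same route as the paper's proof: expand the inner product, kill the cross term $\widetilde{\Delta x}^T\widetilde{\Delta s}$ via the orthogonality of $\mathrm{Null}(A)$ and $\mathrm{Row}(A)$, extract the linear term from the perturbed complementarity equation, and bound the residual contribution by $\sqrt{n}\,\eta\mu^k=\tfrac{\eta}{\sqrt{n}}(x^k)^Ts^k$ (the paper passes through $\|r\|_1\le\sqrt{n}\|r\|_2$ where you apply Cauchy--Schwarz to $e^Tr$ directly, which is the same estimate). No gaps.
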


\begin{proof}
To prove~\eqref{eq: compu}, we have
\begin{subequations}
\begin{alignat}{2}
  \Big(x^k+\widetilde{\Delta x}^k \Big)^T \Big(s^k+\widetilde{\Delta s}^k\Big) 
    &= \big(x^k\big)^T s^k + \big(x^k\big)^T \widetilde{\Delta s}^k + \big(s^k\big)^T \widetilde{\Delta x}^k +\big(\widetilde{\Delta x}^k\big)^T \widetilde{\Delta s}^k, \\
    &\leq \big(x^k\big)^T s^k + n\beta\mu^k -\big(x^k\big)^T s^k+\|r^k\|_1 + 0, \label{eq: l4c}\\
    &\leq n\beta\mu^k+\sqrt{n}\eta  \mu^k, \label{eq: l4d}\\
    &=  \biggfl\beta+\frac{\eta}{\sqrt{n}}\biggfr \big(x^k\big)^Ts^k\label{eq: l4e}.
\end{alignat}
\end{subequations}
Based on Lemma~\ref{lemma: orthogonal to newton}, we can use the last equation of~\eqref{eq:newton system} in line~\eqref{eq: l4c}. 
Inequality~\eqref{eq: l4d} follows from the residual bound~\eqref{eq:residual bound}, and~\eqref{eq: l4e} follows from the definition of $\mu^k$. 
Similarly, we can show that
\begin{equation*}
    \begin{aligned}
    (x^k+\widetilde{\Delta x}^k)^T(s^k+\widetilde{\Delta s}^k)&\geq \big(x^k\big)^T s^k + n\beta\mu^k -\big(x^k\big)^T s^k-\|r^k\|_1\\
    & \geq \biggfl\beta-\frac{\eta}{\sqrt{n}}\biggfr \big(x^k\big)^Ts^k.
\end{aligned}
\end{equation*}
The proof is complete.
\end{proof}

Lemma~\ref{lemma: remaining in the neighbor} proves that the iterates of the IF-IPM remain in the neighborhood of the central path. 
It follows from Lemma~\ref{lemma: orthogonality and mu}.

\begin{lemma} \label{lemma: remaining in the neighbor}
Let $\big(x^k,s^k,y^k\big)\in \mathcal{N}(\theta)$, then 
$\big(x^{k+1},s^{k+1},y^{k+1}\big)\in \mathcal{N}(\theta)$ for all $k \in \mathbb{N}$.
\end{lemma}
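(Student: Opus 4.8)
The plan is to prove the one-step implication $(x^k,y^k,s^k)\in\mathcal{N}(\theta)\Rightarrow(x^{k+1},y^{k+1},s^{k+1})\in\mathcal{N}(\theta)$; the claim for all $k\in\mathbb{N}$ then follows by induction, the base case being the choice $(x^0,y^0,s^0)\in\mathcal{N}(\theta)$ in Algorithm~\ref{alg: IF-IPM}. Fix $k$, drop the superscript, and write $(\widetilde{\Delta x},\widetilde{\Delta y},\widetilde{\Delta s})$ for the inexact step and $(x^+,y^+,s^+)$ for the update, with $\mu^+=(x^+)^Ts^+/n$. I would introduce the scaled directions $p=X^{-1/2}S^{1/2}\widetilde{\Delta x}$ and $q=X^{1/2}S^{-1/2}\widetilde{\Delta s}$, so that $p_iq_i=\widetilde{\Delta x}_i\widetilde{\Delta s}_i$, $p+q=(XS)^{-1/2}(\beta\mu e-Xs+r)$, and crucially $p^Tq=\widetilde{\Delta x}^T\widetilde{\Delta s}=0$ by Lemma~\ref{lemma: orthogonality}.

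Feasibility of $(x^+,y^+,s^+)$ is immediate from \eqref{eq:feasibility of inexact solution} with $\alpha=1$, so the real work is the proximity bound and strict positivity. For proximity, reading the last line of \eqref{eq:inexact orthogonal system} componentwise gives $x_i^+s_i^+=\beta\mu+r_i+p_iq_i$; summing and using $p^Tq=0$ yields $\mu^+=\beta\mu+e^Tr/n$, hence
\[
  X^+S^+e-\mu^+e=\Big(r-\tfrac{e^Tr}{n}e\Big)+PQe,\qquad P=\diag(p),\ Q=\diag(q).
\]
I would then estimate $\|X^+S^+e-\mu^+e\|_2\le\|r\|_2+\|PQe\|_2$ (centering $r$ only shrinks its norm), bound $\|r\|_2\le\eta\mu$ via the enforced residual bound \eqref{eq:residual bound}, and bound $\|PQe\|_2\le\tfrac{1}{2\sqrt{2}}\|p+q\|_2^2$, which is valid precisely because $p^Tq=0$. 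To control $\|p+q\|_2^2=\sum_i(\beta\mu-x_is_i+r_i)^2/(x_is_i)$ I would use $x_is_i\ge(1-\theta)\mu$ on $\mathcal{N}(\theta)$ and the identity $\sum_i(\mu-x_is_i)=0$, which gives $\sum_i(\beta\mu-x_is_i)^2=\|XSe-\mu e\|_2^2+n(1-\beta)^2\mu^2$, then add the residual term to obtain $\|p+q\|_2^2\le\frac{\mu}{1-\theta}\big(\sqrt{\theta^2+n(1-\beta)^2}+\eta\big)^2$. Substituting the Algorithm~\ref{alg: IF-IPM} parameters $\theta=0.2$, $\eta=0.1$, $\beta=1-0.11/\sqrt{n}$ (so $n(1-\beta)^2=0.0121$) makes this at most $\approx 0.135\,\mu$, whence $\|X^+S^+e-\mu^+e\|_2\lesssim 0.148\,\mu$, while \eqref{eq: compl} of Lemma~\ref{lemma: orthogonality and mu} divided by $n$ gives $\mu^+\ge(\beta-\eta/\sqrt{n})\mu\ge 0.79\,\mu$ for $n\ge 1$; thus $\|X^+S^+e-\mu^+e\|_2\le\theta\mu^+$, with a small positive margin.

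For strict positivity I would rerun the same computation along the segment $x(t)=x+t\widetilde{\Delta x}$, $s(t)=s+t\widetilde{\Delta s}$, $t\in[0,1]$, getting $X(t)S(t)e-\mu(t)e=(1-t)(XSe-\mu e)+t(r-\tfrac{e^Tr}{n}e)+t^2PQe$ and $\mu(t)=(1-t(1-\beta))\mu+te^Tr/n\ge(1-t(1-\beta)-t\eta/\sqrt{n})\mu>0$. Then $|x_i(t)s_i(t)-\mu(t)|\le(1-t)\theta\mu+t\eta\mu+t^2\|PQe\|_2$, which with the chosen parameters stays strictly below $\mu(t)$ for every $t\in[0,1]$ (a single quadratic inequality in $t$, worst at $t=1$, already settled by the estimates above). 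Hence $x_i(t)s_i(t)>0$ for all $i$ and all $t\in[0,1]$, so by continuity from $(x(0),s(0))=(x,s)>0$ we get $x(t),s(t)>0$ throughout, in particular $(x^+,s^+)>0$. Together with feasibility and the proximity bound this gives $(x^+,y^+,s^+)\in\mathcal{N}(\theta)$.

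The main obstacle is the proximity estimate: it hinges entirely on $p^Tq=0$ (Lemma~\ref{lemma: orthogonality}), which both kills the cross term in $\mu^+$ and licenses the bound $\|PQe\|_2\le\tfrac{1}{2\sqrt{2}}\|p+q\|_2^2$, and on making the combined second-order-plus-residual term $\|PQe\|_2+\|r\|_2$ small enough that the resulting constant ($\approx 0.148$) stays under $\theta\mu^+/\mu$ ($\ge 0.158$). The parameters $\eta,\theta,\beta$ in Algorithm~\ref{alg: IF-IPM} are calibrated exactly for this margin, so a looser residual tolerance or a wider neighborhood would break the inequality. The step from ``$x_i(t)s_i(t)>0$'' to ``$x(t),s(t)>0$'' is routine but must be carried out, since the proximity bound alone does not exclude a simultaneous sign flip of a primal coordinate and its dual partner.
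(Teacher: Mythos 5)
Your proof is correct and follows essentially the same route as the paper's: feasibility from \eqref{eq:feasibility of inexact solution}, the second-order term controlled via $\|PQe\|\le 2^{-3/2}\|p+q\|^2$ (the paper's appeal to Lemma 5.3 of \cite{Wright1997_Primal}, valid here because $p^Tq=0$), the neighborhood bound $x_is_i\ge(1-\theta)\mu$, and a continuity argument along the segment for strict positivity. The only difference is cosmetic: you exploit the exact identity $\mu^{+}=\beta\mu+e^Tr/n$ together with the centered residual, whereas the paper bounds $\sqrt{n}\,|\beta\mu^k-\mu^{k+1}|\le\eta\mu^k$ via Lemma~\ref{lemma: orthogonality and mu}; both yield the required inequality $\|X^{+}S^{+}e-\mu^{+}e\|\le\theta\mu^{+}$ for the same parameter choice.
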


\begin{proof}
It is enough to show that 

\begin{subequations}
\begin{alignat}{2}
Ax^{k+1}&=b,\label{eq: l51a}\\
A^Ty^{k+1}+s^{k+1}&=c, \label{eq: l51b}\\ 
\big(x^{k+1}, \ s^{k+1} \big)&>0,\label{eq: l51c}\\
\big\|X^{k+1}S^{k+1}e-\mu^{k+1} e \big\|_2&\leq \theta \mu^{k+1}, \qquad  \forall i\in\{1,\dots,n\}.\label{eq: l51d}
\end{alignat}
\end{subequations}
We can derive equalities~\eqref{eq: l51a} and~\eqref{eq: l51b} from equation (\ref{eq:feasibility of inexact solution}). To prove~\eqref{eq: l51d}, first we show that 
$\big\|\Delta X^k \Delta S^k e\big\|\leq \frac{\theta^2 +n(1-\beta)^2+\eta^2}{2^{\frac{3}{2}}(1-\theta)}\mu^k$. 
Let $D= \big(X^k \big)^{\frac{1}{2}} \big(S^k \big)^{-\frac{1}{2}}$, then we have
\begin{subequations}
\begin{alignat}{2}
    \big\|\Delta X^k \Delta S^k e\big\| &=\big\|\big(D^{-1}\Delta X^k\big) \big(D\Delta S^k\big)e\big\|\label{eq: l52a}\\
    &\leq2^{-\frac{3}{2}} \big\|D^{-1}\Delta x^k+D\Delta s^k \big\|^2\label{eq: l52b}\\
    &=2^{-\frac{3}{2}} \big\|(X^kS^k)^{-\frac{1}{2}}(S^k\Delta x^k+X^k\Delta s^k)\big\|^2\label{eq: l52c}\\
    &=2^{-\frac{3}{2}} \big\|(X^kS^k)^{-\frac{1}{2}}(\beta \mu^ke-X^kS^ke+r^k) \big\|^2\label{eq: l52d}\\
    &=\sum_{i=1}^{n}\frac{(\beta \mu^k-x_i^ks_i^k+r_i^k)^2}{2^{\frac{3}{2}}x_i^ks_i^k}\label{eq: l52e}\\
    &\leq\frac{\|\beta \mu^ke-X^kS^ke+r^k\|^2}{2^\frac{3}{2} \min_{i}x_i^ks_i^k}\label{eq: l52f}\\
    &\leq\frac{\|\beta \mu^ke-X^kS^ke\|^2+\|r^k\|^2}{2^\frac{3}{2}(1-\theta)\mu^k}\label{eq: l52g}\\
    &\leq \frac{\|(X^kS^ke- \mu^ke)+(1-\beta)\mu^ke\|^2+(\eta\mu^k)^2}{2^\frac{3}{2} (1-\theta)\mu^k}\label{eq: l52h}\\
    &\resizebox{0.7\hsize}{!}{$\leq\frac{\|(X^kS^ke- \mu^ke)\|^2 +2(1-\beta)\mu^ke^T(X^kS^ke- \mu^ke)+n((1-\beta)\mu^k)^2+(\eta\mu^k)^2}{2^\frac{3}{2}(1-\theta)\mu^k}$}\label{eq: l52i}\\
    &\leq \frac{(\theta\mu^k)^2 +n((1-\beta)\mu^k)^2+(\eta\mu^k)^2}{2^\frac{3}{2} (1-\theta)\mu^k}\label{eq: l52j}\\
    &\leq\frac{\theta^2 +n(1-\beta)^2+\eta^2}{2^{\frac{3}{2}}(1-\theta)}\mu^k.\label{eq: l52k}
\end{alignat}
\end{subequations}

Equation~\eqref{eq: l52b} follows from Lemma 5.3 of~\cite{Wright1997_Primal},~\eqref{eq: l52e} from equation~\eqref{eq:inexact orthogonal system},~\eqref{eq: l52g} from $\min_i x_i^ks_i^k\geq (1-\theta) \mu^k$,~\eqref{eq: l52h} from the residual bound~\eqref{eq:residual bound},  and~\eqref{eq: l52j} from the definition of the neighborhood. 
We now prove inequality~\eqref{eq: l51d} as follows
\begin{subequations}
\begin{alignat}{2}
    \|X^{k+1}S^{k+1}e-\mu^{k+1} e\|_2 &=\sqrt{\sum_{i=1}^{n}\big( (x^{k}_i+\Delta x^k_i) (s^{k}_i+\Delta s^k_i)-\mu^{k+1}\big)^2\label{eq: l53a}}\\
    &= \sqrt{\sum_{i=1}^{n} \big( \beta \mu^k +\Delta x^k_i\Delta s^k_i+r_i^k-\mu^{k+1} \big)^2 \label{eq: l53b}}\\
    &\leq \big\|\Delta X^k\Delta S^ke \big\| + \sqrt{n} \big|\beta\mu^k-\mu^{k+1} \big|+  \big\|r^k \big\|\label{eq: l53c}\\
    &\leq \frac{\theta^2 +n(1-\beta)^2+\eta^2}{2^{\frac{3}{2}}(1-\theta)}\mu^k+ \sqrt{n} \big|\beta\mu^k-\mu^{k+1} \big|+\eta \mu^k\label{eq: l53d}\\
    &\leq \frac{\theta^2 +n(1-\beta)^2+\eta^2}{2^{\frac{3}{2}}(1-\theta)}\mu^k+ 2\eta \mu^k\\
    &\leq \bigg( \frac{\theta^2 +n(1-\beta)^2+\eta^2}{2^{\frac{3}{2}}(1-\theta)}+ 2\eta\bigg) \frac{\mu^{k+1}}{\beta-\frac{\eta}{\sqrt{n}}}\label{eq: l53f}\\
    &\leq\theta \mu^{k+1}.\label{eq: l53g}
\end{alignat}
\end{subequations}
Equation~\eqref{eq: l53b} follows from system~\eqref{eq:inexact orthogonal system},~\eqref{eq: l53c} form the triangular inequality, and~\eqref{eq: l53d} form Lemma~\ref{lemma: orthogonality and mu}. 
One can easily verify that inequality~\eqref{eq: l53g} holds for $(\eta, \theta, \beta)=(0.1,0.2,1-\frac{0.11}{\sqrt{n}})$.
For $0\leq\alpha\leq 1$, let $x_i^k(\alpha)=x_i^k+\alpha(\Delta x_i^k)$ and $s_i^k(\alpha)=s_i^k+\alpha(\Delta s_i^k)$. 
We have $(x_i^k(0), s_i^k(0))>0$ for all $i\in\{1,\dots,n\}$. 
Based on the previous step and Lemma~\ref{lemma: orthogonality and mu}, we have 
$$x_i^k(\alpha)s_i^k(\alpha)\geq (1-\theta)\mu^k(\alpha)\geq(1-\theta)\left(\beta-\frac{\eta}{\sqrt{n}}\right)\mu^k>0.$$
We have $(x^{k+1}_i, s^{k+1}_i)>0$ because we can not have $x_i^k(\alpha)=0$ and $s_i^k(\alpha)=0$ for any $i\in\{1,\dots,n\}$ and $\alpha\in[0,1]$. 
Thus, inequality \eqref{eq: l51c} is proved, and the proof is complete.
\end{proof}

Based on Lemma~\ref{lemma: remaining in the neighbor}, IF-IPM remains in the neighborhood of the central path, and it converges to the optimal solution if $\mu^k$ converges to zero. In Theorem~\ref{theorem: iteration bound}, we prove that the algorithm reaches the $\zeta$-optimal solution after a polynomial time.

\begin{theorem}\label{theorem: iteration bound}
The sequence $\mu^k$ converges to zero linearly, and we have $\mu^k\leq \zeta$ after
$\Ocal(\sqrt{n}\log(\frac{{\mu}_0}{\zeta}))$ 
iterations.
\end{theorem}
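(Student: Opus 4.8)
The plan is to establish a one-step reduction of the form $\mu^{k+1}\le\gamma\,\mu^k$ for some constant $\gamma<1$ that is of size $1-\Theta(1/\sqrt{n})$, and then iterate. First I would invoke Lemma~\ref{lemma: orthogonality and mu}: since a full step is taken (Algorithm~\ref{alg: IF-IPM}, line~\ref{alg-step:update solution}), the upper bound~\eqref{eq: compu} gives
\begin{equation*}
  \mu^{k+1}=\frac{(x^{k+1})^Ts^{k+1}}{n}
  =\frac{(x^k+\widetilde{\Delta x}^k)^T(s^k+\widetilde{\Delta s}^k)}{n}
  \le\Bigl(\beta+\frac{\eta}{\sqrt{n}}\Bigr)\mu^k.
\end{equation*}
With the algorithm's parameters $\beta=1-\tfrac{0.11}{\sqrt{n}}$ and $\eta=0.1$, the factor is $\beta+\eta/\sqrt{n}=1-\tfrac{0.01}{\sqrt{n}}<1$, so $\mu^k$ decreases linearly at rate $1-\Theta(1/\sqrt n)$. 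One should also note that Lemma~\ref{lemma: remaining in the neighbor} guarantees $(x^k,y^k,s^k)\in\mathcal N(\theta)$ for every $k$, so the step is always well defined and the bound applies at every iteration; this is the structural fact that makes the recursion legitimate.

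Next I would unroll the recursion: from $\mu^k\le(1-\tfrac{0.01}{\sqrt n})^k\mu^0$ it follows that $\mu^k\le\zeta$ as soon as $(1-\tfrac{0.01}{\sqrt n})^k\le\zeta/\mu^0$. Taking logarithms and using $\log(1-t)\le-t$ for $t\in(0,1)$, it suffices to have
\begin{equation*}
  k\ge\frac{\sqrt n}{0.01}\,\log\!\Bigl(\frac{\mu^0}{\zeta}\Bigr),
\end{equation*}
which is $\Ocal\bigl(\sqrt n\log(\mu^0/\zeta)\bigr)$ iterations. To finish the "converges to zero linearly" part of the statement, I would just observe that the per-iteration contraction factor is bounded away from $1$ uniformly in $k$ (it depends only on $n$, not on $k$), which is exactly linear convergence.

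The only genuine obstacle is making sure the contraction factor is strictly below $1$ with room to spare — i.e., that the residual-induced term $\eta/\sqrt n$ does not swamp the reduction $1-\beta=0.11/\sqrt n$ built into the centering parameter. This is where the choice $\eta=0.1<0.11$ is used, and it is the same delicate balancing of $(\eta,\theta,\beta)$ that already had to be verified for inequality~\eqref{eq: l53g} in Lemma~\ref{lemma: remaining in the neighbor}; here it is simpler because only the scalar inequality $\beta+\eta/\sqrt n<1$ is needed. Everything else is the routine geometric-series bookkeeping sketched above. (If one additionally wants an exact optimal solution rather than a $\zeta$-optimal one, one sets $\zeta=2^{-\Ocal(L)}$ so that $\log(\mu^0/\zeta)=\Ocal(L)$, recovering the advertised $\Ocal(\sqrt n L)$ iteration bound, and then applies a standard rounding/purification procedure — but that is outside the statement of this theorem.)
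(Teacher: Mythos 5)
Your proposal is correct and follows essentially the same route as the paper: both apply the upper bound of Lemma~\ref{lemma: orthogonality and mu} with $(\beta,\eta)=(1-\tfrac{0.11}{\sqrt n},0.1)$ to get the contraction factor $1-\tfrac{0.01}{\sqrt n}$, unroll the recursion, and take logarithms to obtain the $\Ocal(\sqrt n\log(\mu^0/\zeta))$ bound. Your explicit appeal to Lemma~\ref{lemma: remaining in the neighbor} to justify that the contraction applies at every iteration is a detail the paper leaves implicit, but it is the right observation and does not change the argument.
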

%

\begin{proof}
By Lemma~\ref{lemma: orthogonality and mu}, we have
\begin{equation*}
    \mu^{k+1}\leq\left(\beta+\frac{\eta}{\sqrt{n}}\right)\mu^k=  \left(1-\frac{0.01}{\sqrt{n}}\right)\mu^{k}\leq\left(1-\frac{0.01}{\sqrt{n}}\right)^k\mu^{0}.
\end{equation*}
Since $\mu^k$ is bounded below by zero, and it is monotonically decreasing, it converges linearly to zero.
Since the IF-IPM stops when $\mu^{k}\leq \zeta$, then we have
\begin{align*}
    \left(1-\frac{0.01}{\sqrt{n}}\right)^k&\leq \frac{\zeta}{\mu^{0}} \Rightarrow
    \frac{\sqrt{n}}{0.01}\log \left(\frac{\mu^{0}}{\zeta}\right)\leq k.
\end{align*}
Thus, IF-IPM has $\Ocal(\sqrt{n}\log(\frac{{\mu}_0}{\zeta}))$ iteration complexity.
\end{proof}

%
As the proof shows, the IF-IPM has polynomial complexity for any values of the parameters satisfying conditions~\eqref{eq: param con1} and~\eqref{eq: param con2}:
\begin{align}
    \left(\beta+\frac{\eta}{\sqrt{n}}\right)&\leq\left(1-\frac{0.01}{\sqrt{n}}\right),\label{eq: param con1}\\
    \left(\beta-\frac{\eta}{\sqrt{n}}\right)&\geq0,\label{eq: param con2}\\
    \left(\frac{\theta^2 -n(1-\beta)^2+\eta^2}{2^{3/2}(1-\theta)}+ 2\eta\right) &\leq \theta\left(\beta-\frac{\eta}{\sqrt{n}}\right).\label{eq: param con3}
\end{align}
It is not hard to check that $\theta=0.2$ and $\eta=0.1$ satisfy these conditions.
Let  $L$ be the binary length of input data~\cite{Wright1997_Primal} defined as 
$$L=mn+m+n+\sum_{i,j}\lceil\log(|a_{ij}|+1)\rceil+\sum_{i}\lceil\log(|c_{i}|+1)\rceil+\sum_{j}\lceil\log(|b_{j}|+1)\rceil.$$

An exact solution can be calculated by rounding~\cite{Wright1997_Primal} if  $\mu^k\leq2^{\Ocal(L)}.$
Thus, the upper bound for the number of iterations of our IF-IPM to find an exact optimal solution is $\Ocal(\sqrt{n}L)$ (for more details, see Chapter 3 of~\cite{Wright1997_Primal}). In the next section, we analyze the OSS more and compare it to other Newton systems.

\section{Analyzing the Orthogonal Subspaces System}\label{sec: anal system} 
In this section, first, we analyze the condition number of the matrix of  the~\eqref{eq: OSS}. Then, the new system will be compared to other systems.

\subsection{The Condition Number of \texorpdfstring{\pmb{$M$}}{M}}
By the definition of the neighborhood of the central path, for each pair of primal and dual variables $(x_i^k, s_i^k)$, the following relationship holds
\begin{equation*}
    |x_i^k s_i^k-\mu^k|\leq \|X^k S^k e - \mu^k e\|_2\leq \theta \mu^k\quad \Rightarrow \quad  (1-\theta)\mu^k\leq x_i^ks_i^k\leq (1+\theta)\mu^k.
\end{equation*}
So we can rewrite $X^k S^k$ as
$X^kS^k = \mu^kI + \theta \mu^k \mathcal{L}^k,$ where $\mathcal{L}^k$ is a diagonal matrix with both $I-\mathcal{L}^k$ and $I+\mathcal{L}^k$ positive semi-definite. 
Recall that $AV=0$,
then
\begin{align*}
    (M^k)^T M^k 
    &= \begin{bmatrix}
    A(X^k)^2A^T & -\theta \mu^k A\mathcal{L}^kV\\-\theta \mu^k V^T\mathcal{L}^kA^T & V^T(S^k)^2V
    \end{bmatrix}\\
    &= \begin{bmatrix}
    A &0\\
    0 & V^T
    \end{bmatrix}\begin{bmatrix}
    (X^k)^2 & -\theta\mu^k \mathcal{L}^k\\
    -\theta\mu^k \mathcal{L}^k & (S^k)^2
    \end{bmatrix}\begin{bmatrix}
    A^T &0 \\
    0& V
    \end{bmatrix}.
\end{align*}
It worth noting that the decomposition above is not unique since $AV=0$. There might be a better decomposition choice that might give a tighter bound for the condition number. In this work, for its simplicity, we use this choice.
With the submultiplicativity of the spectral norm, we can easily have the following lemma.
\begin{lemma}\label{lemma: eigen of matrix product}
For any full row rank matrix $Q\in \Rmbb^{m\times n}$ and any symmetric positive definite matrix $\Psi\in \Rmbb^{n\times n}$, their condition number satisfies
$$
\kappa(Q\Psi Q^T) = \Ocal\left(\kappa(\Psi)\right).$$
\end{lemma}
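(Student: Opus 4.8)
The plan is to bound the largest and smallest eigenvalues of $Q\Psi Q^T$ separately in terms of the eigenvalues of $\Psi$, using submultiplicativity of the spectral norm together with the full-row-rank hypothesis on $Q$. Since $\Psi$ is symmetric positive definite and $Q$ has full row rank, the matrix $Q\Psi Q^T$ is symmetric positive definite as well (for any $u\neq 0$ we have $Q^Tu\neq 0$, hence $u^TQ\Psi Q^Tu = (Q^Tu)^T\Psi(Q^Tu)>0$), so its condition number is $\kappa(Q\Psi Q^T)=\lambda_{\max}(Q\Psi Q^T)/\lambda_{\min}(Q\Psi Q^T)$, and both quantities are strictly positive.

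For the upper eigenvalue, I would simply write $\lambda_{\max}(Q\Psi Q^T)=\|Q\Psi Q^T\|\leq \|Q\|\,\|\Psi\|\,\|Q^T\| = \|Q\|^2\lambda_{\max}(\Psi)$. For the lower eigenvalue, the cleanest route is via the minimum singular value of $Q$: writing $\sigma_{\min}(Q)>0$ for the smallest singular value of $Q$ (positive because $Q$ has full row rank), one has for every $u$ that $u^TQ\Psi Q^Tu \geq \lambda_{\min}(\Psi)\,\|Q^Tu\|^2 \geq \lambda_{\min}(\Psi)\,\sigma_{\min}(Q)^2\,\|u\|^2$, so $\lambda_{\min}(Q\Psi Q^T)\geq \sigma_{\min}(Q)^2\lambda_{\min}(\Psi)$. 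Dividing the two bounds gives
\begin{equation*}
\kappa(Q\Psi Q^T)\;\leq\;\frac{\|Q\|^2}{\sigma_{\min}(Q)^2}\cdot\frac{\lambda_{\max}(\Psi)}{\lambda_{\min}(\Psi)}\;=\;\kappa(Q)^2\,\kappa(\Psi),
\end{equation*}
and since $Q$ is a fixed data-dependent matrix, $\kappa(Q)^2$ is an absolute constant with respect to the IPM iteration, which yields $\kappa(Q\Psi Q^T)=\Ocal(\kappa(\Psi))$ as claimed.

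The only subtle point — the ``main obstacle'' such as it is — is being careful about what is held fixed in the $\Ocal(\cdot)$: the statement is only meaningful because $Q$ (playing the role of $A$ or $V^T$) does not change across iterations, whereas $\Psi$ (playing the role of the diagonal block built from $X^k,S^k$) does. So I would state explicitly that the suppressed constant depends on $\kappa(Q)$ and otherwise the argument is just the two one-line spectral-norm estimates above; no genuinely hard computation is involved. If one prefers to avoid singular values, the lower bound can instead be obtained by noting $\lambda_{\min}(Q\Psi Q^T)\geq \lambda_{\min}(\Psi)\lambda_{\min}(QQ^T)$ via the same Rayleigh-quotient inequality, and $\lambda_{\min}(QQ^T)>0$ precisely because $Q$ has full row rank; this gives $\kappa(Q\Psi Q^T)\leq \kappa(QQ^T)\kappa(\Psi)$, which is the same conclusion.
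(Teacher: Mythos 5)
Your proof is correct and is essentially the argument the paper intends: the paper states only that the lemma follows ``with the submultiplicativity of the spectral norm'' and omits the details, and your two one-line bounds $\lambda_{\max}(Q\Psi Q^T)\leq\|Q\|^{2}\lambda_{\max}(\Psi)$ and $\lambda_{\min}(Q\Psi Q^T)\geq\sigma_{\min}(Q)^{2}\lambda_{\min}(\Psi)$ supply exactly those details, yielding the hidden constant $\kappa(Q)^{2}$ that the paper implicitly carries (it reappears as the factor $\kappa_Q$ in the subsequent bound on $\kappa(M^k)$). Your closing remark about what is held fixed in the $\Ocal(\cdot)$ is also the correct reading of the statement.
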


To apply Lemma \ref{lemma: eigen of matrix product} to $(M^k)^T M^k$, we need to show that the middle matrix in the decomposition is symmetric positive definite. Clearly, the matrix is symmetric. We only need to show that all of its eigenvalues are positive. 
Take the following notation,
$$U^k = \begin{bmatrix}
    (X^k)^2 & -\theta\mu^k \mathcal{L}^k\\
    -\theta\mu^k \mathcal{L}^k & (S^k)^2
    \end{bmatrix}.$$
Notice that the four blocks of $U^k$ are all square and diagonal, so $U^k$ is square and symmetric. 
For the sake of simplicity, in the remainder of this section, we omit the superscript $k$.
Let equate the characteristic polynomial of $U^k$ to zero. We can get all the eigenvalues of $U^k$ as
\begin{equation*}
    \frac{1}{2}\Bigg((x_i^2 + s_i^2) \pm \sqrt{(x_i^2 + s_i^2)^2 - 4 x_i^2s_i^2 + 4\theta^2\mu^2\ell_i^2}   \Bigg)
\end{equation*}
for $i=1,\dots,n$, where $\ell_i$ is the $i^{\rm th}$ diagonal element of $\mathcal{L}$.
If the smallest eigenvalue is positive, then $U^k$ is symmetric positive definite. The smallest eigenvalue denoted as $\iota_{\min}$, can be bounded as follows
\begin{align*}
    \iota_{\min} &= \min_i \frac{1}{2}\Bigg((x_i^2 + s_i^2) - \sqrt{(x_i^2 + s_i^2)^2 - 4 x_i^2s_i^2 + 4\theta^2\mu^2\ell_i^2}   \Bigg)\\
        &=\min_i \frac{(x_i^2 + s_i^2)}{2}\Bigg(1 - \sqrt{1 + \frac{-4 x_i^2s_i^2 + 4\theta^2\mu^2\ell_i^2}{(x_i^2 + s_i^2)^2}}   \Bigg)\\
        &=\min_i \frac{(x_i^2 + s_i^2)}{2}\Bigg(1 - \sqrt{1 + \frac{4(- x_is_i + \theta\mu\ell_i)( x_is_i + \theta\mu\ell_i)}{(x_i^2 + s_i^2)^2}}   \Bigg).
\end{align*}
Since $U^k$ is a real symmetric matrix, its eigenvalues are real; therefore, we can use $(x_i^2 + s_i^2)^2 - 4 x_i^2s_i^2 + 4\theta^2\mu^2\ell_i^2 \geq 0$ to show
\begin{equation*}
\begin{aligned}
    \frac{4 x_i^2s_i^2 - 4\theta^2\mu^2\ell_i^2}{(x_i^2 + s_i^2)^2} \leq 1 \Rightarrow
    \frac{4 (x_i s_i  -\theta\mu\ell_i)(x_i s_i  +\theta\mu\ell_i)}{(x_i^2 + s_i^2)^2} \leq 1. 
\end{aligned}
\end{equation*}
Recall the definition of $\mathcal{L}^k$ in this section, it follows that

\begin{align*}
       1\geq \frac{4 (x_i s_i  -\theta\mu\ell_i)(x_i s_i  +\theta\mu\ell_i)}{(x_i^2 + s_i^2)^2} = \frac{4 (\mu + \theta \mu \ell_i  -\theta\mu\ell_i)(\mu + \theta \mu \ell_i +\theta\mu\ell_i)}{(x_i^2 + s_i^2)^2}
       = \frac{4\mu^2(1 + 2\theta\ell_i)}{(x_i^2 + s_i^2)^2}.
\end{align*}
It follows that
\begin{align*}
    \iota_{\min} &= \min_i \frac{(x_i^2 + s_i^2)}{2}\Bigg(1 - \sqrt{1 + \frac{4\mu^2(1 + 2\theta\ell_i)}{(x_i^2 + s_i^2)^2}}   \Bigg)\\
    &\geq\min_i \frac{(x_i^2 + s_i^2)}{2}\Bigg(1 - \left(1 - \frac{1}{2}\frac{4\mu^2(1 + 2\theta\ell_i)}{(x_i^2 + s_i^2)^2}\right)   \Bigg)\\
       &=\min_i  \frac{\mu^2(1 + 2\theta\ell_i)}{(x_i^2 + s_i^2)},
\end{align*}
where the inequality holds because $\sqrt{1-t}\leq 1-\frac{1}{2}t$ for any $t\leq 1$.
When $\theta\in [0,\frac{1}{4}]$ and $\|x\|,\|s\|\leq \omega$, it follows that
$\iota_{\min}\geq \frac{\mu^2}{4\omega^2}>0.$
Analogously, we have
\begin{align*}
    \iota_{\max} &= \max_i \frac{1}{2}\Bigg((x_i^2 + s_i^2) + \sqrt{(x_i^2 + s_i^2)^2 - 4 x_i^2s_i^2 + 4\theta^2\mu^2e_i^2}   \Bigg)\\
    &\leq \max_i \frac{1}{2}\Bigg((x_i^2 + s_i^2) + \sqrt{(x_i^2 + s_i^2)^2}   \Bigg)\leq 2\omega^2.\label{eq: eigmax}
\end{align*}
So the condition number of $U^k$ is bounded by $\kappa(U^k) \leq \frac{8\omega^4}{(\mu^k)^2}$
and the condition number of $M^k$ satisfies
\begin{align}
    \kappa(M^k)=\Ocal\left( \frac{\omega^2}{\mu^k} \kappa_Q\right),\label{eq: cond_oss dependence}
\end{align}
where $\kappa_Q$ is the condition number of constant matrix $Q=\begin{bmatrix}
    A &0\\
    0 & V^T
    \end{bmatrix}$.

\subsection{Comparing Different Systems}
To compute the Newton step, one can solve the Full Newton System (FNS), whose coefficient matrix is
\begin{equation}\label{eq:full newton system}
\begin{bmatrix}
0&A&0\\
A^T&0&I\\
0&S^k&X^k
\end{bmatrix}.
\end{equation}
The FNS can be simplified to the Augmented System (AS), which has the coefficient matrix
\begin{equation}\label{eq:augmented system}
\begin{bmatrix}
0&A\\
A^T&-(X^k)^{-1}S^k
\end{bmatrix}.
\end{equation}
We can simplify the AS to get the Normal Equation System (NES) with coefficient matrix
\begin{equation}\label{eq:normal equation}
    AX^k (S^k)^{-1}A^T.
\end{equation}
Many of the implementations of IPMs use the NES since it has a small positive definite matrix and can be solved efficiently by Cholesky factorization~\cite{Wright1997_Primal}. 
Table~\ref{tab: systems} compares the properties of the different systems.
Although the NES is smaller than other systems, the NES is typically much denser.
The coefficient matrix of the NES is dense if matrix A has dense columns. 
We can use the Sherman-Morrison-Woodbury formula \cite{horn2012matrix} to solve the NES with sparse matrix efficiency if matrix $A$ has only a few dense columns \cite{lowrank}. 
The OSS has better sparsity than the NES since the sparsity  of its coefficient matrix $\begin{bmatrix}
    -XA^T& SV
\end{bmatrix}$ is determined by the sparsity of $A$ and $V$.
By sparse $A$, and appropriate choice of the basis, the OSS can be much sparser than the NES.
If we solve FNS, AS, or NES inexactly, the potential infeasibility will increase the complexity of IPMs. 
Thus, the~\ref{eq: OSS} is more adaptable with inexact solvers such as QLSAs and the classical iterative method. 
Another reason is that the condition number of OSS has the square root of the rate of the growth than other systems. 
Most of the inexact solvers, both classical and quantum, are sensitive to the condition number. 
Despite its high adaptability for inexact solvers,  the proposed OSS is larger than the NES but smaller than the FNS and AS, and it is nonsingular but not positive-definite. 
Thus, we can not solve it by Cholesky factorization. We can use LU factorization instead.
In Section~\ref{alg: IF-QIPM}, we discuss how we can use QLSA efficiently in IF-QIPMs and how much the OSS is more adaptable to QLSAs than the other systems. 
%
%
\begin{table}[]
    \centering
    \resizebox{\columnwidth}{!}{
\begin{tabular}{ |c|c|c|c|c| } 
 \hline
 System & Size of system & Symmetric & Positive Definite&  Rate of the Condition Number Growth\\ 
 \hline
FNS  & $2n+m$& \xmark& \xmark&  $\Ocal\big(\frac{1}{\mu^2}\big)$\\
AS & $n+m$& \cmark& \xmark&  $\Ocal\big(\frac{1}{\mu^2}\big)$\\
NES  & $m$ & \cmark& \cmark& $\Ocal\big(\frac{1}{\mu^2}\big)$\\
\ref{eq: OSS}  & $n$ & \xmark& \xmark&  $\Ocal\big(\frac{1}{\mu}\big)$\\
\hline
\end{tabular}
}
\caption{Characteristics of the Coefficient Matrices of Different Newton Systems}
    \label{tab: systems}
\end{table}

\section{IF-QIPM with QLSAs} \label{sec: IF-QIPM}
%
We employ a similar approach to~\cite{mohammadisiahroudi2022efficient} to couple the QLSA with the proposed IF-IPM. 
The HHL algorithm proposed by~\cite{Harrow2009_quantum} was the first QLSA for solving a quantum linear system with $p$-by-$p$ Hermitian matrix in $\tilde{\Ocal}_{p}\big(\frac{d^2\kappa^2}{\epsilon}\big)$ time complexity.
Here, $\epsilon$ is the target error, $\kappa$ is the condition number of the coefficient matrix, and $d$ is the maximum number of nonzero entries in every row or column. 
After the HHL method, several QLSAs were proposed with better time complexity than the HHL method. 
%
%
Wossnig et al.~\cite{QLSA} proposed a QLSA algorithm independent of sparsity with $\tilde{\Ocal}_{p}\Big(\|M\|_F\frac{\kappa}{\epsilon}\Big)$ complexity. 
Childs et al.~\cite{Childs} developed a  QLSA with exponentially better dependence on error with $\tilde{\Ocal}_{p,\kappa,\frac{1}{\epsilon}}(d\kappa)$ complexity. In another direction, QLSAs using Block Encoding have $\Ocal_{p,\frac{1}{\epsilon}}(\|M\|_F\kappa)$ complexity~\cite{Blockencoding}. To encode the linear system in a quantum setting and solve it by QLSA, we need a procedure discussed in~\cite{mohammadisiahroudi2022efficient}. To solve the~\ref{eq: OSS}, we build system $M'^k z'^k=\sigma'^k$ where
\begin{equation} \label{eq: transformed system}
      M'^k = \frac{1}{\|M^k\|}\bbmatrix 0 & M^k\\ {M^k}^T & 0\ebmatrix,\ z'^k = \bpmatrix 0 \\ z^k\epmatrix\text{, and }
    \sigma'^k = \frac{1}{\|M^k\|}\bpmatrix \sigma^k \\ 0\epmatrix.  
\end{equation}
The new system can be implemented in a quantum setting and solved by QLSA since $M'^k$ is a Hermitian matrix and $\|M'^k\|=1$. To extract a classical solution, we use the Quantum Tomography Algorithm (QTA) by~\cite{van2021tomography}. Theorem~\ref{theorem: QLSA solution} shows how we can adapt QLSA by~\cite{Blockencoding} to solve the OSS system.

\begin{theorem} \label{theorem: QLSA solution}
Given the linear system (\ref{eq: OSS}), QLSA and QTA provide the solution $(\widetilde{\lambda^k},\widetilde{\Delta y^k})$ with residual $r^k$, where $\|r^k\|\leq \eta \mu^k$, in at most $$\tilde{\Ocal}_{n,\kappa_Q,\omega,\frac{1}{\mu^k}}\left(n\frac{\kappa_Q^2\|Q\|\omega^5}{(\mu^k)^2}\right)$$
time complexity.
\end{theorem}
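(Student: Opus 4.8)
The plan is to combine the complexity of the block-encoding QLSA from~\cite{Blockencoding}, the tomography cost from~\cite{van2021tomography}, and the condition-number bound~\eqref{eq: cond_oss dependence} established in Section~\ref{sec: anal system}, tracking how the required precision $\epsilon^k$ feeds into these costs. First I would recall that to enforce the residual bound $\|r^k\|\le\eta\mu^k$ it suffices, as shown just before Algorithm~\ref{alg: IF-IPM}, to solve~\eqref{eq: OSS} with solution error $\epsilon^k\le\eta\mu^k/\|M^k\|$. I would then pass to the Hermitianized, normalized system~\eqref{eq: transformed system}, which has $\|M'^k\|=1$ and condition number $\kappa(M'^k)=\kappa(M^k)$; the QLSA of~\cite{Blockencoding} produces a state $\epsilon$-close to the normalized solution in $\tilde{\Ocal}_{n,1/\epsilon}(\|M'^k\|\kappa(M'^k)) = \tilde{\Ocal}(\kappa(M^k))$ calls to the block-encoding oracle.

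Next I would account for tomography: extracting a classical vector with $\ell_2$-error $\epsilon$ from the output state via the QTA of~\cite{van2021tomography} costs an extra factor of roughly $n/\epsilon$ (the dimension of the solution over the target accuracy), so that the overall cost of one OSS solve is of order $n\kappa(M^k)/\epsilon^k$ up to polylog factors, with the error measured relative to $\|z^k\|$; care is needed here because tomography returns a unit vector and one must rescale, introducing a factor $\|z^k\|$ — but $\|z^k\|$ is polynomially bounded in the relevant quantities and hence absorbed into the $\tilde\Ocal$. Then I would substitute the two key bounds: $\kappa(M^k)=\Ocal\!\big(\tfrac{\omega^2}{\mu^k}\kappa_Q\big)$ from~\eqref{eq: cond_oss dependence}, and $\epsilon^k = \eta\mu^k/\|M^k\|$, using $\|M^k\|=\Ocal(\|Q\|\,\omega^2/\mu^k)$-type bound together with $\|Q\|$ — actually I would more carefully bound $\|M^k\|$ in terms of $\|Q\|$ and $\omega$ and $\mu^k$, and note that the dependence on $\mu^k$ in $\|M^k\|$ partially cancels or compounds with the other factors. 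Collecting the powers: one factor $n$ from tomography dimension, $\kappa_Q^2$ from $\kappa(M^k)$ and from the block-encoding normalization together with a $\|Q\|$, and the $\omega$ and $\tfrac{1}{\mu^k}$ powers accumulate to $\omega^5/(\mu^k)^2$, yielding the stated bound $\tilde{\Ocal}_{n,\kappa_Q,\omega,1/\mu^k}\!\big(n\kappa_Q^2\|Q\|\omega^5/(\mu^k)^2\big)$.

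The main obstacle I anticipate is the bookkeeping of the precision and norm factors: the QLSA and QTA accuracies are stated relative to normalized states, so one must convert between the normalized error and the absolute error $\epsilon^k$ on $z^k$, which brings in $\|M^k\|$ (from the normalization in~\eqref{eq: transformed system}) and $\|z^k\|$ (from the tomography rescaling). Getting every power of $\omega$, $\mu^k$, $\kappa_Q$, and $\|Q\|$ right — in particular verifying that the combined exponent of $\omega$ is exactly $5$ and of $1/\mu^k$ is exactly $2$ — requires careful but routine estimates using the neighborhood bound $(1-\theta)\mu\le x_is_i\le(1+\theta)\mu$ and $\|x\|,\|s\|\le\omega$. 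The structural steps (Hermitianization, invoking the cited QLSA/QTA complexities, plugging in $\kappa(M^k)$) are straightforward; the delicate part is ensuring no hidden polynomial factor in $n$ or $\mu^k$ is dropped when several $\tilde\Ocal$ estimates are composed.
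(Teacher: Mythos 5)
Your overall route is the same as the paper's: reduce the residual requirement to a solution accuracy $\epsilon^k\le\eta\mu^k/\|M^k\|$, Hermitianize and normalize via~\eqref{eq: transformed system}, invoke the block-encoding QLSA and the QTA of~\cite{van2021tomography}, and substitute $\kappa(M^k)=\Ocal(\omega^2\kappa_Q/\mu^k)$ from~\eqref{eq: cond_oss dependence}. However, the quantitative core of the argument has a genuine gap: the exponents in the theorem are never actually derived from your stated cost model, and your cost model does not produce them. You price one OSS solve at $n\kappa(M^k)/\epsilon^k$. With $\epsilon^k=\eta\mu^k/\|M^k\|$ and the correct bound $\|M^k\|=\Ocal(\omega\|Q\|)$, this evaluates to $n\kappa_Q\|Q\|\omega^3/(\mu^k)^2$, which is short of the claimed $n\kappa_Q^2\|Q\|\omega^5/(\mu^k)^2$ by a factor of $\kappa_Q\omega^2$. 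The paper's accounting is different in exactly this respect: the tomography is not a bare $n/\epsilon$ overhead on a single $\tilde\Ocal(\kappa)$ state preparation; rather, the QTA requires $\Ocal\bigl(n\kappa_{M^k}\|\sigma^k\|/\mu^k\bigr)$ repetitions (the condition number appears again in the query count, and $\|\sigma^k\|/\mu^k\le 1+\theta-\beta=\Ocal(1)$ by the neighborhood bound), and each repetition costs a full QLSA run of $\tilde\Ocal(\kappa_{M^k}\|M^k\|)$. Multiplying these gives $\kappa_{M^k}^2\|M^k\|\cdot n=n\kappa_Q^2\|Q\|\omega^5/(\mu^k)^2$. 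Your closing sentence that the powers ``accumulate to $\omega^5/(\mu^k)^2$'' is asserted rather than computed, and it is inconsistent with the formula you wrote two sentences earlier.

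A second, smaller gap is the norm of $M^k$. You hedge between ``$\|M^k\|=\Ocal(\|Q\|\omega^2/\mu^k)$-type'' and a promise to bound it more carefully; the former is not what the paper uses and would further scramble the exponents. The correct statement, which the paper uses, is $\|M^k\|=\Ocal(\omega\|Q\|)$, following from $M^k=\begin{bmatrix}-X^kA^T & S^kV\end{bmatrix}$ together with $\|x^k\|,\|s^k\|\le\omega$ and submultiplicativity with the constant matrix $Q$. To repair the proof you need (i) this norm bound pinned down, and (ii) the QTA cost stated as a number of QLSA repetitions carrying its own factor of $\kappa_{M^k}\|\sigma^k\|/\mu^k$, so that the condition number enters squared. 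With those two ingredients the exponents $\kappa_Q^2$, $\omega^5$, and $(\mu^k)^{-2}$ follow by direct multiplication, as in the paper.
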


\begin{proof}
We can derive the transformed system~\eqref{eq: transformed system} from~\eqref{eq: OSS}. To have $\|r^k\|\leq \eta \mu^k$, the error of the linear system $\epsilon_{LS}$ must be less than $\frac{\eta \mu^k}{\|M^k\|}$. Since scaling affects the error of QLSA, we need to find an appropriate bound for QLSA, and QTA~\cite{mohammadisiahroudi2022efficient}. Based on the analysis done in the second section of \cite{mohammadisiahroudi2022efficient},  the complexity of QLSA of \cite{Blockencoding} is $\Ocal(\kappa_{M^k}\|M^k\|_F\text{polylog}(\frac{n\kappa_{M^k}\|\sigma^k\|}{\mu^k}))$, and the complexity of QTA of \cite{van2021tomography} is $\Ocal(\frac{n\kappa_{M^k}\|\sigma^k\|}{\mu^k})$.
Further, based on the definition of the neighborhood of the central path, we have
\begin{align*}
    \frac{\|\sigma^k\|}{\mu^k}&=\frac{\|\beta\mu^k e-X^ks^k\|}{\mu^k}\leq\frac{\|\beta \mu^k e-\mu^k e\|+\|\mu^k e-X^ks^k\|}{\mu^k}\leq (1+\theta-\beta).
\end{align*}
As we can see, the error bound is fixed for all iterations and shows high adaptability of this system for QLSA and QTA. 
Since $\kappa_{M^k}=\Ocal(\frac{\omega^2\kappa_Q}{\mu^k})$ and $\|M^k\|_F=\Ocal(\sqrt{n}\omega\|Q\|)$, the QLSA by~\cite{Blockencoding} can find such a solution with $\tilde{\Ocal}_{n,\kappa_Q,\omega,\frac{1}{\mu^k}}(\frac{\sqrt{n}\kappa_Q\|Q\|\omega^3}{\mu^k})$
time complexity. The time complexity of QTA by~\cite{van2021tomography} is $\Ocal(\frac{n\kappa_Q\omega^2}{\mu^k})$. So, the total complexity is 
$$\tilde{\Ocal}_{n,\kappa_Q,\omega,\frac{1}{\mu^k}}\left(n^{1.5}\frac{\kappa_Q^2\|Q\|\omega^5}{(\mu^k)^2}\right).$$
The proof is complete.
\end{proof}

There are few studies investigating Quantum Interior Point Methods (QIPMs) for LO problems. First, ~\cite{kerenidisParkas2020_quantum} used Block Encoding and QRAM for finding a $\zeta$-optimal solution with $\tilde{\Ocal}(\frac{n^{2}}{\epsilon^{2}}\bar{\kappa}^3 \log(\frac{1}{\zeta}))$ complexity, where $\bar{\kappa}$ is an upper bound on the condition number of the Newton systems. Casares and Martin-Delgado~\cite{Casares2020_quantum} used QLSA and developed a Predictor-correcter QIPM with $\tilde{\Ocal}(L\sqrt{n}(n+m)\|\bar{M}\|\frac{\bar{\kappa}^2}{\epsilon^{-2}})$ complexity. Both papers used exact IPMs, which are not valid when a QLSA is used to solve the Newton systems. Augustino et al. \cite{augustino2021quantum} proposed two type of convergent QIPMs which addressed the issues of previous QIPMs for SDO. However, in all the proposed QIPMs, $\epsilon$ and $\bar{\kappa}$ increases exponentially, leading to exponential time complexities. To address this problem,~\cite{mohammadisiahroudi2022efficient} developed an II-QIPM using QLSA efficiently by using an iterative refinement method. The complexity of their IR-II-QIPM is  \begin{equation*}
\tilde{\Ocal}_{n,\phi,\kappa_{\Ahat}}\left(n^4L\phi\kappa_{\Ahat}^4\right),
\end{equation*}
where $\hat{A}$ and $\hat{b}$ are preprocessed $A$ and $b$, and $\phi=\omega^{19}(\|\Ahat\|+\|\bhat\|)$. To improve this time complexity, we can use Algorithm~\ref{alg: IF-QIPM}, which is a short-step IF-QIPM for solving LO problems using QLSA and QTA to solve system~\eqref{eq: OSS}. Theorem~\ref{theorem: IFQIPM iter} and Corollary~\ref{cor: IFQIPM time} show the iteration and total time complexities of the proposed IF-QIPM, respectively. 
%

\begin{algorithm}
\caption{IF-QIPM using QLSA} \label{alg: IF-QIPM}
\begin{algorithmic}[1]
 \STATE Choose $\zeta >0$, $\eta=0.1$, $\theta=0.3$ and $\beta=(1-\frac{0.11}{\sqrt{n}})$. 
 \STATE $k \gets 0$
 \STATE Choose initial feasible interior solution $(x^0, y^0, s^0)\in \mathcal{N}(\theta)$ 
\WHILE {$(x^k,y^k,s^k)\notin \mathcal{PD}_\zeta$}
\STATE $\mu^k \gets \frac{\big(x^k\big)^Ts^k}{n}$
\STATE $(M^k,\sigma^k) \gets$\textbf{build} (\ref{eq: OSS}) 
\STATE $(\lambda^k,\Delta y^k) \gets$ \textbf{solve} the (\ref{eq: OSS}) using QLSA and QTA 
\STATE $\Delta x^k= V\lambda^k$ and $\Delta s^k=- A^T\Delta y^k$ 
\STATE $(x^{k+1},y^{k+1},s^{k+1}) \gets (x^k,y^k,s^k)+(\Delta x^k,\Delta y^k,\Delta s^k)$
\STATE $k \gets k+1$
\ENDWHILE
\RETURN{$(x^k,y^k,s^k)$}
\end{algorithmic}
\end{algorithm}

\begin{theorem}\label{theorem: IFQIPM iter}
The IF-QIPM presented in Algorithm~\ref{alg: IF-QIPM} produces a $\zeta$-optimal solution after $\Ocal(\sqrt{n}\log(\frac{\mu^0}{\zeta}))$ iterations.
\end{theorem}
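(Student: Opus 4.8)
The plan is to reduce Theorem~\ref{theorem: IFQIPM iter} to the already-established iteration analysis of the short-step IF-IPM (Algorithm~\ref{alg: IF-IPM}), observing that Algorithm~\ref{alg: IF-QIPM} is structurally the same method, merely with the linear solve in step~\ref{alg-step:QLSA} implemented by QLSA+QTA. The key point is that Theorem~\ref{theorem: QLSA solution} guarantees that the quantum subroutine returns an inexact solution $(\widetilde{\lambda}^k,\widetilde{\Delta y}^k)$ whose residual satisfies the bound $\|r^k\|\leq\eta\mu^k$ used in~\eqref{eq:residual bound}. Hence the hypotheses of Lemma~\ref{lemma: orthogonality and mu} and Lemma~\ref{lemma: remaining in the neighbor} are met at every iteration, and the convergence argument of Theorem~\ref{theorem: iteration bound} carries over verbatim.

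Concretely, first I would note that the parameter choices in Algorithm~\ref{alg: IF-QIPM}, namely $\eta=0.1$, $\theta=0.3$, and $\beta=1-\tfrac{0.11}{\sqrt{n}}$, satisfy the sufficient conditions~\eqref{eq: param con1}, \eqref{eq: param con2}, and~\eqref{eq: param con3}; this is the only place one must redo a (routine) numerical check, since $\theta$ differs from the value $0.2$ used in Algorithm~\ref{alg: IF-IPM}. Second, I would invoke Theorem~\ref{theorem: QLSA solution} to assert that step~7 of Algorithm~\ref{alg: IF-QIPM} produces a direction whose residual obeys $\|r^k\|\leq\eta\mu^k$, and recall from the orthogonal-subspaces construction that $\widetilde{\Delta x}^k\in\mathrm{Null}(A)$ and $\widetilde{\Delta s}^k\in\mathrm{Row}(A)$, so that feasibility~\eqref{eq:feasibility of inexact solution} holds for the full step. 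Third, applying Lemma~\ref{lemma: remaining in the neighbor} inductively shows $(x^k,y^k,s^k)\in\mathcal{N}(\theta)$ for all $k$, and applying the inequality~\eqref{eq: compu} of Lemma~\ref{lemma: orthogonality and mu} gives
\begin{equation*}
\mu^{k+1}\leq\Bigl(\beta+\tfrac{\eta}{\sqrt{n}}\Bigr)\mu^k=\Bigl(1-\tfrac{0.01}{\sqrt{n}}\Bigr)\mu^k\leq\Bigl(1-\tfrac{0.01}{\sqrt{n}}\Bigr)^k\mu^0.
\end{equation*}
Fourth, solving $(1-\tfrac{0.01}{\sqrt{n}})^k\leq\zeta/\mu^0$ for $k$ yields the bound $k\leq\tfrac{\sqrt{n}}{0.01}\log(\mu^0/\zeta)=\Ocal(\sqrt{n}\log(\mu^0/\zeta))$, which is exactly the claimed iteration count.

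There is essentially no new analytic obstacle here; the theorem is a corollary of the earlier development. The one item that genuinely requires care — and which I would present as the substantive content of the proof — is verifying that the residual bound delivered by the quantum subroutine is the \emph{right-hand side} $\eta\mu^k$ rather than some weaker or iteration-dependent bound. This is precisely what Theorem~\ref{theorem: QLSA solution} provides, and it hinges on the fact (also noted there) that the relative error target $\epsilon_{LS}\leq\eta\mu^k/\|M^k\|$ translates into a fixed, iteration-independent precision requirement for QLSA/QTA because $\|\sigma^k\|/\mu^k\leq 1+\theta-\beta$ is uniformly bounded along the central-path neighborhood. Once that is in hand, the iteration-complexity statement follows immediately, so the proof can be written in a few lines by citing Lemma~\ref{lemma: remaining in the neighbor}, Lemma~\ref{lemma: orthogonality and mu}, and Theorem~\ref{theorem: QLSA solution} and repeating the short computation of Theorem~\ref{theorem: iteration bound}.
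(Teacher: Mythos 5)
Your proposal is correct and takes essentially the same route as the paper, whose entire proof is the single remark that the argument is analogous to Theorem~\ref{theorem: iteration bound}; you simply make explicit what the paper leaves implicit, namely that Theorem~\ref{theorem: QLSA solution} supplies the residual bound $\|r^k\|\le\eta\mu^k$ so that Lemmas~\ref{lemma: orthogonality and mu} and~\ref{lemma: remaining in the neighbor} and the contraction computation carry over unchanged. Your added step of re-verifying the parameter conditions for $\theta=0.3$ (rather than $0.2$) is a sensible piece of diligence that the paper omits.
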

%
%
The proof of Theorem~\ref{theorem: IFQIPM iter} is analogous to Theorem~\ref{theorem: iteration bound}. 
\begin{corollary}\label{cor: IFQIPM time}
The detailed time complexity of the IF-QIPM presented in  Algorithm~\ref{alg: IF-QIPM} is
$$
\tilde{\Ocal}_{n,\kappa_Q,\omega,\frac{1}{\mu^k}}\left(\sqrt{n}\log \left(\frac{\mu^0}{\zeta}\right)\left(n^2+ n^{1.5}\frac{\kappa_Q^2\|Q\|\omega^5}{(\zeta)^2}\right)\right)
.
$$
\end{corollary}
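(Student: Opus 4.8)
The plan is to combine the iteration bound of Theorem~\ref{theorem: IFQIPM iter} with the per-iteration cost established in Theorem~\ref{theorem: QLSA solution}, together with the cost of the remaining linear-algebra steps in Algorithm~\ref{alg: IF-QIPM}. The total time complexity is just (number of iterations) $\times$ (work per iteration), so I would first recall that Theorem~\ref{theorem: IFQIPM iter} gives $\Ocal\big(\sqrt{n}\log(\mu^0/\zeta)\big)$ iterations, and then account for each line of the while-loop.

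\textbf{Per-iteration cost.} First I would observe that building $(M^k,\sigma^k)$ in step~6, forming $\Delta x^k=V\lambda^k$ and $\Delta s^k=-A^T\Delta y^k$ in step~8, and the update in step~9 are all classical matrix–vector products involving $A$ and $V$, which cost $\Ocal(n^2)$ arithmetic operations each (recall $A\in\Rmbb^{m\times n}$ with $m\le n$, and $V\in\Rmbb^{n\times(n-m)}$). This yields the $n^2$ term. Next, for step~7, I would invoke Theorem~\ref{theorem: QLSA solution}: solving \eqref{eq: OSS} with QLSA and QTA to the required residual bound $\|r^k\|\le\eta\mu^k$ costs $\tilde{\Ocal}_{n,\kappa_Q,\omega,\frac{1}{\mu^k}}\big(n\kappa_Q^2\|Q\|\omega^5/(\mu^k)^2\big)$. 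Since $\mu^k$ decreases monotonically (Theorem~\ref{theorem: iteration bound}/\ref{theorem: IFQIPM iter}) and the algorithm terminates once $\mu^k\le\zeta$, the worst-case value of $1/(\mu^k)^2$ over all iterations is $\Ocal(1/\zeta^2)$, so each call to the quantum subroutine costs at most $\tilde{\Ocal}_{n,\kappa_Q,\omega,\frac{1}{\mu^k}}\big(n\kappa_Q^2\|Q\|\omega^5/\zeta^2\big)$. Adding the classical and quantum per-iteration costs gives $n^2 + n\kappa_Q^2\|Q\|\omega^5/\zeta^2$.

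\textbf{Combining.} Multiplying the per-iteration bound by the iteration count $\Ocal\big(\sqrt{n}\log(\mu^0/\zeta)\big)$ yields
\[
\tilde{\Ocal}_{n,\kappa_Q,\omega,\frac{1}{\mu^k}}\left(\sqrt{n}\log\left(\frac{\mu^0}{\zeta}\right)\left(n^2+ n\frac{\kappa_Q^2\|Q\|\omega^5}{\zeta^2}\right)\right),
\]
which is the claimed bound. I would also note that the polylogarithmic factors from QLSA and QTA (depending on $n$, $\kappa_{M^k}$, $\|\sigma^k\|/\mu^k$) are absorbed into the $\tilde{\Ocal}$, and that $\|\sigma^k\|/\mu^k\le 1+\theta-\beta = \Ocal(1)$ as shown in the proof of Theorem~\ref{theorem: QLSA solution}, so these logarithms contribute nothing beyond the listed suppressed parameters.

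\textbf{Main obstacle.} The only real subtlety is justifying the replacement of $\mu^k$ by $\zeta$ in the worst-case per-iteration cost: one must argue that although $\mu^k/\mu^0$ could in principle be queried at every iteration, the dominant term is governed by the final iterations where $\mu^k$ is smallest, and that summing the geometrically decreasing costs $\sum_k 1/(\mu^k)^2$ is dominated (up to a constant factor) by its last term $1/\zeta^2$ — this uses the linear convergence rate $\mu^{k+1}\le(1-0.01/\sqrt n)\mu^k$ from Theorem~\ref{theorem: iteration bound}, so $\sum_k (\mu^k)^{-2}$ is a geometric series with ratio bounded away from $1$, hence $\Ocal(\zeta^{-2})$. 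Everything else is bookkeeping of matrix–vector product costs and an appeal to the two cited theorems.
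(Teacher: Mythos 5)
Your proposal is correct and follows essentially the same route as the paper, whose entire proof is the one-line observation that one substitutes $\mu^k\geq\zeta$ into the per-iteration cost of Theorem~\ref{theorem: QLSA solution} and multiplies by the $\Ocal(\sqrt{n}\log(\mu^0/\zeta))$ iteration count of Theorem~\ref{theorem: IFQIPM iter}; your explicit accounting of the $\Ocal(n^2)$ classical matrix--vector work is just the bookkeeping the paper leaves implicit. One small caveat: in your ``main obstacle'' paragraph the geometric ratio is $1+\Theta(1/\sqrt{n})$, \emph{not} bounded away from $1$, so $\sum_k(\mu^k)^{-2}=\Ocal(\sqrt{n}/\zeta^2)$ rather than $\Ocal(1/\zeta^2)$ --- but that refinement is unnecessary anyway, since the claimed bound already carries the full $\sqrt{n}\log(\mu^0/\zeta)$ factor on the quantum term and the simple worst-case-per-iteration argument suffices.
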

\begin{proof}
    To reach a $\zeta$-optimal solution, one needs $\mu\leq \zeta$ in Theorem~\ref{theorem: QLSA solution}. 
\end{proof}
The time complexity depends on $\frac{1}{\zeta}$, which leads to exponential time for finding an exact solution. 
Section~\ref{sec: IR} uses an iterative refinement method to address this issue.
\section{IF-IPM using CGM}\label{sec: IF-IPM CGM}

In the proposed IF-QIPM, we solve the OSS system with QLSA+QTA to compute the Newton step. 
Newton steps can also be calculated by classical Conjugate Gradient methods.
We show in this section that the IF-IPM using CGM can lead to similar complexity to the one of IF-QIPM. 
\subsection{Calculating Newton Step by CGM}
A basic approach for solving the OSS system is Gaussian elimination, or LU factorization,  with $\Ocal(n^3)$ arithmetic operations. 
To reduce the cost of solving the OSS system, the best iterative method is the GMRES algorithm, which also has $\Ocal(n^3)$ worst-case complexity \cite{saad2003iterative}. 
For problems in the form of $E^TE z = E^T \psi$, known as normal equations, one can use a version of CGMs with complexity $\Ocal(nd\kappa_{E}\log(1/\epsilon))$, where $\kappa_E$ is the condition number of matrix $E$ \cite{saad2003iterative}. 
For a linear system in general form with a non-PSD non-symmetric coefficient matrix, such as the OSS system, one can use the reformulation $M^TM z =M^T \sigma$ and use a CGM to solve it. 
Although CGMs for this reformulation have better worst-case complexity than GMRES for the original system, practically GMRES has better performance, especially for large sparse systems with large condition number \cite{saad2003iterative}.
At each iteration of an IF-IPM, Algorithm~\ref{alg: IF-IPM}, we need to solve $M z= \sigma$ such that $\|\sigma-M\tilde{z}\|\leq \eta \mu$, where
\begin{equation*}
    M=\begin{bmatrix}
    -XA^T& SV
\end{bmatrix},\qquad z=\begin{pmatrix}
\Delta y\\
\lambda
\end{pmatrix}, \qquad  \sigma=\beta\mu e-Xs.
\end{equation*}
Here, we use a CGM (Algorithm 8.5 of \cite{saad2003iterative}) as specified in Algorithm~\ref{alg: CGNE}.
\begin{algorithm}[ht]
	\caption{CGM} \label{alg: CGNE}
	\begin{algorithmic}[1]
		\REQUIRE $\big(M\in \mathbb{R}^{n\times n}, \sigma \in \mathbb{R}^{n}\big)$ 
		\STATE  $k \gets 0$
		\STATE $r^{0} \gets M^T\sigma - M^T(M z^0)$ and $p^{0} \gets r^{0}$ 
		\WHILE{$\|r^k\|>\epsilon$}
		\STATE $w^k\gets M p^k$
		\STATE $\alpha^k\gets \frac{\|r^k\|^2}{\|w^k\|^2}$
		\STATE $z^{k+1}\gets z^{k}+\alpha^k z^k$ 
            \STATE $r^{k+1}\gets M^T r^{k}-\alpha^k  M^T w^k$ 
		\STATE $\beta^k\gets \frac{\|r^{k+1}\|^2}{\|r^k\|^2}$
		\STATE $p^{k+1} \gets r^k+\beta^k p^k$
		
		\STATE $k \gets k+1$
		\ENDWHILE
	\end{algorithmic}
\end{algorithm}
As we can see, there is no matrix-matrix product in this CGM. The following theorem presents the complexity of calculating the Newton step.
\begin{theorem} \label{theorem: CGNE complexity}
The computational complexity of Algorithm~\ref{alg: CGNE}, to find a solution $\tilde{z}$ such that $\|\sigma-M \tilde{z}\|\leq \eta \mu$, is  $\Ocal(n^2\kappa_{M}\log(\frac{\|r^0\|}{\mu}))$.
\end{theorem}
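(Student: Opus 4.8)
The plan is to bound the number of CGM iterations and then multiply by the per-iteration cost. First I would recall the standard convergence guarantee for the conjugate gradient method applied to the normal equations $M^TMz = M^T\sigma$: since this is CG on a symmetric positive definite system with coefficient matrix $M^TM$, whose condition number is $\kappa(M^TM) = \kappa_M^2$, the error in the energy norm contracts by a factor $\left(\frac{\sqrt{\kappa(M^TM)}-1}{\sqrt{\kappa(M^TM)}+1}\right) = \left(\frac{\kappa_M-1}{\kappa_M+1}\right)$ per iteration. Equivalently, the residual $\|r^k\|$ (here $r^k$ is the residual of the normal equations, i.e.\ essentially $M^T(\sigma - Mz^k)$) satisfies $\|r^k\| \le 2\left(1 - \frac{2}{\kappa_M+1}\right)^k \|r^0\|$ up to constants depending on $\kappa_M$, so to drive it below a threshold proportional to $\mu$ one needs $k = \Ocal\!\left(\kappa_M \log\!\left(\frac{\|r^0\|}{\mu}\right)\right)$ iterations.

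Second, I would translate the stopping criterion. Algorithm~\ref{alg: CGNE} terminates when the normal-equation residual is small, but the theorem wants $\|\sigma - M\tilde z\| \le \eta\mu$ — the residual of the \emph{original} system. Since $\|M^T(\sigma - Mz)\| \ge \sigma_{\min}(M)\,\|\sigma - Mz\|$ when $\sigma - Mz$ lies in the row space of $M$ (which holds at every CG iterate because the iterates and residuals stay in the relevant Krylov subspaces), controlling $\|M^T(\sigma-Mz)\|$ to within $\Ocal(\sigma_{\min}(M)\,\eta\mu)$ suffices; this only changes the logarithmic argument by a factor involving $\kappa_M$ and $\|M\|$, which is absorbed into the polylog. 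So the iteration count remains $\Ocal\!\left(\kappa_M \log\!\left(\frac{\|r^0\|}{\mu}\right)\right)$.

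Third, I would count the per-iteration arithmetic cost. Inspecting Algorithm~\ref{alg: CGNE}, each iteration performs a constant number of matrix-vector products with $M$ and $M^T$ (namely $w^k = Mp^k$ and $M^Tw^k$, plus the initial $M^T\sigma$ and $M^T(Mz^0)$), together with $\Ocal(n)$-cost vector operations (inner products, axpys). Since $M \in \mathbb{R}^{n\times n}$ (it has $n$ rows and $n-m+m = n$ columns), a dense matrix-vector product costs $\Ocal(n^2)$; crucially, as the text emphasizes, there is no matrix-matrix product. Hence each CGM iteration costs $\Ocal(n^2)$, and multiplying by the iteration count gives the total complexity $\Ocal\!\left(n^2\kappa_M \log\!\left(\frac{\|r^0\|}{\mu}\right)\right)$, matching the claimed bound (with $\eta$ a fixed constant folded into the $\Ocal$).

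\textbf{Main obstacle.} The routine parts are the matrix-vector cost count and the classical CG convergence rate. The delicate point is the mismatch between the residual that CG naturally controls ($M^T(\sigma - Mz)$, measured in Algorithm~\ref{alg: CGNE}'s line for $r^{k+1}$) and the residual $\sigma - M\tilde z$ demanded by the theorem statement: one must argue that the extra $\kappa_M$ (or $\sigma_{\min}(M)^{-1}$, $\|M\|$) factors incurred in passing between them enter only inside the logarithm, not as a multiplicative prefactor, and that the CG iterates' residuals genuinely lie in $\mathrm{Row}(M)$ so the singular-value bound is tight rather than vacuous. I would also want to double-check that the initial residual normalization $\|r^0\|$ in the log is the one the theorem intends (the original-system residual at $z^0$, or the normal-equation residual), since this affects whether the stated $\log(\|r^0\|/\mu)$ is literally correct or correct only up to the usual polylog slack.
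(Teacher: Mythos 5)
Your proposal is correct and follows essentially the same route as the paper, whose entire proof is a one-line appeal to Theorem 6.29 of Saad's book (the standard CG convergence bound, applied to the normal equations $M^TMz=M^T\sigma$ with effective condition number $\kappa(M^TM)=\kappa_M^2$, hence $\Ocal(\kappa_M\log(\|r^0\|/\mu))$ iterations at $\Ocal(n^2)$ cost each). The ``main obstacle'' you flag dissolves more cleanly than your workaround suggests: for CG on the normal equations the energy norm of the error satisfies $\|z-z^*\|_{M^TM}=\|M(z-z^*)\|_2=\|\sigma-Mz\|_2$, so the standard contraction estimate directly controls the residual of the \emph{original} system, with no $\sigma_{\min}(M)^{-1}$ translation factor needed.
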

\begin{proof}
Similar to the proof of Theorem 6.29 of \cite{saad2003iterative}.
\end{proof}
\subsection{Total Complexity of IF-IPM using CGM}
Based on Theorem~\ref{theorem: iteration bound}, the iteration complexity is independent of how we calculate the Newton step. The iteration complexity of IF-IPM method as presented in Algorithm~\ref{alg: IF-IPM} is $\Ocal(\sqrt{n}\log(\frac{\mu^0}{\zeta}))$ when the residual in each iteration is bounded by $\eta \mu$. Furthermore, by this approach, we avoid matrix-matrix products. The following Theorem presents the total complexity of IF-IPM using CGM.
\begin{theorem}
Using IF-IPM as presented in Algorithm \ref{alg: IF-IPM} with CGM, a $\zeta$-optimal solution for an LO problem is attained using at most $\tilde{\Ocal}_{\mu^0/\zeta}(n^{2.5}\frac{\kappa_{Q}\omega^2}{\zeta})$ arithmetic operations.
\end{theorem}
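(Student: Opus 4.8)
The plan is to multiply the iteration count of Algorithm~\ref{alg: IF-IPM} by the per-iteration cost of running Algorithm~\ref{alg: CGNE}, and then to collapse the condition number of $M^k$ via the estimate~\eqref{eq: cond_oss dependence} together with the stopping condition $(x^k,y^k,s^k)\notin\mathcal{PD}_\zeta$. First I would recall from Theorem~\ref{theorem: iteration bound} that, as long as the residual bound $\|r^k\|\le\eta\mu^k$ is enforced at every iteration, the IF-IPM reaches a point with $\mu^k\le\zeta$ in $\Ocal(\sqrt{n}\log(\mu^0/\zeta))$ iterations, independently of how the Newton step is computed. It therefore remains only to bound the arithmetic cost of one step.

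Next I would invoke Theorem~\ref{theorem: CGNE complexity}: at iteration $k$ we must produce $\tilde z^k$ with $\|\sigma^k-M^k\tilde z^k\|\le\eta\mu^k$, and since $r^k = M^k\tilde z^k-\sigma^k$ is exactly the OSS residual appearing in~\eqref{eq:inexact orthogonal system}, Algorithm~\ref{alg: CGNE} delivers such a $\tilde z^k$ in $\Ocal\bigl(n^2\kappa_{M^k}\log(\|r^0\|/\mu^k)\bigr)$ operations; note that the relevant quantity is $\kappa_{M^k}$ and not $\kappa_{M^k}^2$ because the CGM is run on the normal equations $(M^k)^TM^k z=(M^k)^T\sigma^k$, whose condition number is $\kappa_{M^k}^2$, and each inner iteration performs only the $\Ocal(n^2)$ matrix–vector products with $M^k$ and $(M^k)^T$. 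Then I would apply~\eqref{eq: cond_oss dependence}: since Algorithm~\ref{alg: IF-IPM} uses $\theta=0.2\le\tfrac14$ and the iterates stay bounded, $\|x^k\|,\|s^k\|\le\omega$, we get $\kappa_{M^k}=\Ocal\bigl(\omega^2\kappa_Q/\mu^k\bigr)$; because the loop is still active, $(x^k,y^k,s^k)\notin\mathcal{PD}_\zeta$, hence $\mu^k>\zeta$ and therefore $\kappa_{M^k}=\Ocal\bigl(\omega^2\kappa_Q/\zeta\bigr)$. The factor $\log(\|r^0\|/\mu^k)$ is polylogarithmic in the problem data — using $\|r^0\|\le\|M^k\|\,\|\sigma^k\|$ with $\|M^k\|=\Ocal(\omega\|Q\|)$ and $\|\sigma^k\|\le\bigl((1-\beta)\sqrt{n}+\theta\bigr)\mu^k=\Ocal(\mu^k)$ — so it is absorbed into $\tilde{\Ocal}$.

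Putting the pieces together, one step costs $\tilde{\Ocal}\bigl(n^2\omega^2\kappa_Q/\zeta\bigr)$, and multiplying by the $\Ocal(\sqrt{n}\log(\mu^0/\zeta))$ iterations, with $\log(\mu^0/\zeta)$ folded into the subscript of $\tilde{\Ocal}_{\mu^0/\zeta}$, yields the total bound $\tilde{\Ocal}_{\mu^0/\zeta}\bigl(n^{2.5}\kappa_Q\omega^2/\zeta\bigr)$, as claimed. I would close by noting that solving via CGM controls the residual $\|\sigma^k-M^k\tilde z^k\|$ directly, which is precisely the quantity constrained by~\eqref{eq:residual bound}, so no extra factor of $\|M^k\|$ is incurred in converting a solution-error bound to a residual bound.

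The main obstacle is not the arithmetic but justifying the hypotheses of~\eqref{eq: cond_oss dependence} uniformly along the trajectory, i.e. the uniform bound $\|x^k\|,\|s^k\|\le\omega$; for a feasible path-following IPM staying in $\mathcal{N}(\theta)$ this follows from boundedness of the primal–dual feasible region, which is one of the reasons the self-dual embedding of Section~\ref{sec: self-dual} is ultimately used to guarantee a well-posed, bounded instance. A secondary point to check is that the discrepancy between the CGM stopping test, phrased in terms of $\|(M^k)^Tr\|$, and the required bound on $\|r\|$ only costs polylogarithmic factors in $\kappa_{M^k}$, which is already accounted for inside Theorem~\ref{theorem: CGNE complexity}.
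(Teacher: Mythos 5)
Your proposal follows essentially the same route as the paper: multiply the $\Ocal(\sqrt{n}\log(\mu^0/\zeta))$ iteration bound of Theorem~\ref{theorem: iteration bound} by the per-iteration CGM cost $\Ocal(n^2\kappa_{M^k}\log(\|r^0\|/\mu^k))$ from Theorem~\ref{theorem: CGNE complexity}, then use $\kappa_{M^k}=\Ocal(\kappa_Q\omega^2/\mu^k)$ together with $\mu^k>\zeta$ and the $\Ocal(1)$ bound on $\|\sigma^k\|/\mu^k$ to collapse the remaining factors. Your additional remarks (the residual-versus-normal-equations stopping test, the uniform bound $\|x^k\|,\|s^k\|\le\omega$) are reasonable refinements of details the paper treats implicitly, but they do not change the argument.
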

\begin{proof}
Based on Theorem~\ref{theorem: CGNE complexity}, the cost of calculating the Newton direction at each iteration is $\Ocal(n^2\kappa_{M}\log(\frac{\|r^0\|}{\mu}))$, if CGM starts with $z^0=0$, then $r^0=\sigma$. Based on the analysis in the proof of Theorem~\ref{theorem: QLSA solution}, we have $\kappa_{M}=\Ocal(\frac{\kappa_{Q}\omega^2}{\zeta})$, and $\log(\frac{\|r^0\|}{\mu})=\log(\frac{\|\sigma\|}{\mu})=\Ocal(1)$. Thus, the complexity of CGM is bounded by $\Ocal(n^2\frac{\kappa_{Q}\omega^2}{\zeta})$. The total complexity of the IF-IPM using CGM is
$$\Ocal\left(\sqrt{n}\log\left(\frac{\mu^0}{\zeta}\right)\left(n^2\frac{\kappa_{Q}\omega^2}{\zeta}\right)\right).$$
The proof is complete.
\end{proof}
Similar to IF-QIPM using QLSA+QTA, we have a linear dependence on inverse precision, leading to an exponential complexity for finding an exact solution. We use the iterative refinement (IR) scheme in the next section to address this issue.
\section{Iterative Refinement for IF-IPM}\label{sec: IR}
To get an exact optimal solution, the time complexity contains an exponential term $\zeta=2^{-\Ocal(L)}$. To address this problem, we can fix $\zeta=10^{-2}$ and improve the precision by iterative refinement in $\Ocal(L)$ iterations~\cite{mohammadisiahroudi2022efficient}. Let us consider an LO problem in standard form with data $(A,b,c)$. Let $\nabla>1$ be a scaling factor. For a feasible solution $(x,y,s)\in \Pcal\Dcal$, we define the refining problem, as
\begin{equation} \label{eq: refining problem}
    \begin{aligned}
    \min_{\xhat}\  \nabla s^T&\xhat, \\
    {\rm s.t. }\;\;
    A\xhat &= 0, \\
    \xhat &\geq -\nabla x,
    \end{aligned}
    \qquad \qquad 
    \begin{aligned}
    \max_{\yhat,\shat} \  -\nabla x^T\shat,\ \ & \\
    {\rm s.t. }\;\;
    A^T\yhat +&\shat = \nabla s,\\
    &\shat \geq 0.
    \end{aligned}
\end{equation}
By changing variables, one can easily reformulate this problem to a standard LO.

 \begin{lemma}\label{lem: refining}
If $(\xhat,\yhat,\shat)$ is a $\hat{\zeta}$-optimal solution for refining problem~\eqref{eq: refining problem}, then $(x^r, y^r, s^r)$ is a $\frac{\hat{\zeta}}{\nabla^2}$-optimal solution for LO problem $(A,b,c)$ where $x^r=x+\frac{1}{\nabla} \hat{x}$, $y^r=y+\frac{1}{\nabla} \hat{y}$, and $s^r=c-A^Ty^r$
 \end{lemma}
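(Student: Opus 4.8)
The plan is to check directly that the claimed point $(x^r,y^r,s^r)$ is feasible for $(A,b,c)$ and then to bound its normalized duality gap $\frac{(x^r)^Ts^r}{n}$ in terms of the gap of $(\hat x,\hat y,\hat s)$. First I would verify primal feasibility: since $A\hat x = 0$ we get $Ax^r = Ax + \frac{1}{\nabla}A\hat x = b$, and the sign constraint $\hat x \geq -\nabla x$ rescales to $\frac{1}{\nabla}\hat x \geq -x$, hence $x^r = x + \frac{1}{\nabla}\hat x \geq 0$. For dual feasibility, $s^r$ is \emph{defined} as $c - A^Ty^r$, so $A^Ty^r + s^r = c$ automatically; what needs checking is $s^r \geq 0$. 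Here I would use the dual constraint of the refining problem, $A^T\hat y + \hat s = \nabla s$, together with $A^Ty + s = c$: compute $A^Ty^r = A^Ty + \frac{1}{\nabla}A^T\hat y = (c - s) + \frac{1}{\nabla}(\nabla s - \hat s) = c - \frac{1}{\nabla}\hat s$, so $s^r = c - A^Ty^r = \frac{1}{\nabla}\hat s \geq 0$ since $\hat s \geq 0$. This is the one slightly non-obvious identity and I expect it to be the main (though still routine) obstacle — one has to notice that $s^r$ collapses to exactly $\frac{1}{\nabla}\hat s$.

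With $x^r = x + \frac{1}{\nabla}\hat x$ and $s^r = \frac{1}{\nabla}\hat s$ in hand, the duality gap computation is immediate:
\begin{equation*}
(x^r)^T s^r = \Bigl(x + \tfrac{1}{\nabla}\hat x\Bigr)^T \tfrac{1}{\nabla}\hat s = \tfrac{1}{\nabla} x^T\hat s + \tfrac{1}{\nabla^2}\hat x^T \hat s = \tfrac{1}{\nabla^2}\Bigl(\nabla x^T\hat s + \hat x^T\hat s\Bigr).
\end{equation*}
The quantity $\nabla x^T\hat s + \hat x^T\hat s = \nabla x^T\hat s - (-\nabla x^T\hat s) + \hat x^T \hat s$... more cleanly: the objective functions of the refining pair are $\nabla s^T\hat x$ (primal) and $-\nabla x^T\hat s$ (dual), so the refining duality gap is $\nabla s^T\hat x + \nabla x^T\hat s$. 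Using $A^T\hat y + \hat s = \nabla s$ and $A\hat x = 0$ gives $s^T\hat x \cdot \nabla = (\nabla s)^T\hat x = (A^T\hat y + \hat s)^T\hat x = \hat y^T A\hat x + \hat s^T\hat x = \hat s^T \hat x$, so the refining gap equals $\hat x^T\hat s + \nabla x^T\hat s = (x + \tfrac{1}{\nabla}\hat x)^T \hat s \cdot$... I would organize this so that the refining gap $g_{\mathrm{ref}} := \nabla s^T\hat x + \nabla x^T\hat s$ satisfies $g_{\mathrm{ref}} = \nabla^2 (x^r)^T s^r$, whence $\frac{(x^r)^T s^r}{n} = \frac{1}{\nabla^2}\cdot\frac{g_{\mathrm{ref}}}{n} \leq \frac{\hat\zeta}{\nabla^2}$ by the assumption that $(\hat x,\hat y,\hat s)$ is $\hat\zeta$-optimal for the refining problem.

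Finally I would remark that the refining problem, after the affine change of variables $\hat x \mapsto \hat x + \nabla x$ (to make the lower bound $\geq 0$), is a genuine LO problem in standard form with the same constraint matrix $A$, so the earlier machinery applies to it; this justifies the phrase ``By changing variables, one can easily reformulate this problem to a standard LO'' preceding the lemma and makes the notion of a $\hat\zeta$-optimal solution for~\eqref{eq: refining problem} well defined. No step requires more than elementary linear algebra; the only place to be careful is keeping the factors of $\nabla$ and $\nabla^2$ straight and recognizing the simplification $s^r = \frac{1}{\nabla}\hat s$.
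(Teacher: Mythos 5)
Your proposal is correct and follows essentially the same route as the paper: the paper's proof also declares feasibility ``straightforward to check'' and then evaluates $(x^r)^Ts^r=(x+\tfrac{1}{\nabla}\hat{x})^T(c-A^T(y+\tfrac{1}{\nabla}\hat{y}))=\tfrac{1}{\nabla^2}(\hat{x}+\nabla x)^T(\nabla s-A^T\hat{y})\leq\tfrac{\hat{\zeta}}{\nabla^2}$, which is exactly your computation once you substitute $\nabla s-A^T\hat{y}=\hat{s}$. Your explicit identity $s^r=\tfrac{1}{\nabla}\hat{s}$ and the spelled-out feasibility checks are just a more detailed rendering of the same argument.
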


 \begin{proof}
 It is straightforward to check that $(x^r, y^r, s^r)$ is a feasible solution for LO problem $(A,b,c)$. For the optimality gap, we have
 $$(x^r)^Ts^r=(x+\frac{1}{\nabla} \hat{x})^T(c-A^T(y+\frac{1}{\nabla} \hat{y}))=\frac{1}{\nabla^2}(\xhat+\nabla x)^T(\nabla s- A^T \yhat)\leq \frac{\hat{\zeta}}{\nabla^2}.$$
 The proof is complete.
 \end{proof}

Based on  Lemma~\ref{lem: refining}, we develop the IR-IF-IPM described in Algorithm~\ref{alg:iterative refinement}.

\begin{algorithm}[]
\caption{IR-IF-IPM / IR-IF-QIPM} \label{alg:iterative refinement}
\begin{algorithmic}[1]
\REQUIRE \big($A \in \mathbb{R}^{m\times n}, b\in \mathbb{R}^{m}, c\in \mathbb{R}^{n},\zeta<\hat{\zeta}<1$ \big)
\STATE  $k \gets 1$ and $\nabla_0\gets1$
\STATE $({x}^1, {y}^1, {s}^1) \gets$ \textbf{solve} $(A,b,c)$ using IF-IPM of Algorithm~\ref{alg: IF-IPM} or IF-QIPM of Algorithm~\ref{alg: IF-QIPM} with $\hat{\zeta}$ precision 
\WHILE{$(x^k,y^k,s^k)\notin \mathcal{PD}_{\zeta}$}
\STATE $\nabla^k \gets \frac{1}{\big(x^k\big)^Ts^k}$
\STATE $(\hat{x}^k, \hat{y}^k, \hat{s}^k) \gets$ \textbf{solve} $(A,0,\nabla^ks^{k})$ using IF-QIPM of Algorithm~\ref{alg: IF-IPM} or IF-QIPM of Algorithm~\ref{alg: IF-QIPM} with $\hat{\zeta}$ precision
\STATE $x^{k+1}\gets x^{k}+\frac{1}{\nabla^k} \hat{x}^k$ and $y^{k+1}\gets y^{k}+\frac{1}{\nabla^k}\hat{y}^k$
\ENDWHILE
\end{algorithmic}
\end{algorithm}
%

\begin{theorem}
The proposed IR-IF-IPM (IR-IF-QIPM) of Algorithm~\ref{alg:iterative refinement} produces a $\zeta$-optimal solution with at most $\Ocal(\frac{\log\log(\zeta)}{\log\log(\hat{\zeta})})$ inquiry to IF-IPM (IF-QIPM) with precision $\hat{\zeta}$. 
\end{theorem}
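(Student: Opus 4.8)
The plan is to track how the optimality gap contracts across successive refinement rounds and then count the rounds needed to reach $\zeta$. First I would set up the induction hypothesis: after round $k$ the current iterate $(x^k,y^k,s^k)$ is feasible and satisfies $(x^k)^Ts^k \le \hat{\zeta}$ (for $k=1$ this is exactly the output guarantee of the initial call to Algorithm~\ref{alg: IF-IPM}/\ref{alg: IF-QIPM} with precision $\hat{\zeta}$, noting $\mathcal{PD}_{\hat\zeta}$ means $\frac{x^Ts}{n}\le\hat\zeta$, so up to the harmless factor $n$ we may work with $x^Ts$ directly, or simply absorb $n$ into the constant). Then I would invoke Lemma~\ref{lem: refining}: the refining problem at round $k$ uses scaling factor $\nabla^k = 1/((x^k)^Ts^k)$, and solving it to precision $\hat\zeta$ yields $(\hat x^k,\hat y^k,\hat s^k)$ a $\hat\zeta$-optimal solution of $(A,0,\nabla^k s^k)$; by the lemma the updated point $(x^{k+1},y^{k+1},s^{k+1})$ is a $\hat\zeta/(\nabla^k)^2$-optimal solution of $(A,b,c)$.

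The key computation is then that $(x^{k+1})^Ts^{k+1} \le \hat\zeta/(\nabla^k)^2 = \hat\zeta\,\big((x^k)^Ts^k\big)^2$. Combined with the hypothesis $(x^k)^Ts^k\le\hat\zeta<1$, this gives the quadratic-type contraction $(x^{k+1})^Ts^{k+1} \le \hat\zeta \cdot \big((x^k)^Ts^k\big)^2 \le \big((x^k)^Ts^k\big)^2$, and more usefully, unrolling the recursion $g_{k+1}\le \hat\zeta\, g_k^2$ with $g_1\le\hat\zeta$: writing $g_k \le \hat\zeta^{\,a_k}$ with $a_1=1$ and $a_{k+1} = 1 + 2a_k$, one gets $a_k = 2^k-1$, hence $g_k \le \hat\zeta^{\,2^k-1}$. (I would present this cleanly as: $\log g_k \le (2^k-1)\log\hat\zeta$, using $\log\hat\zeta<0$.) To guarantee $g_k\le\zeta$ it suffices that $(2^k-1)\log\hat\zeta \le \log\zeta$, i.e. $2^k - 1 \ge \log\zeta/\log\hat\zeta$, so $k = \Ocal\!\big(\log(\log\zeta/\log\hat\zeta)\big)$ rounds suffice. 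Each round makes one call to the IF-IPM (IF-QIPM) at precision $\hat\zeta$ (plus the initial call), so the total number of calls is $\Ocal\!\big(\log(\log\zeta/\log\hat\zeta)\big)$.

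I would then reconcile this with the stated bound $\Ocal\!\big(\tfrac{\log\zeta}{\log\hat\zeta}\big)$: since for fixed $\hat\zeta\in(0,1)$ and $\zeta\to 0$ we have $\log(\log\zeta/\log\hat\zeta)\le \log\zeta/\log\hat\zeta$, the claimed bound certainly holds (it is in fact loose; I expect the authors intend the weaker but simpler linear-in-$\log$ count, which would follow if one only used the linear contraction $g_{k+1}\le\hat\zeta\,g_k$, valid because $\nabla^k\ge 1$ gives $(\nabla^k)^{-2}\le 1$ but more directly $g_{k+1}\le\hat\zeta g_k g_k \le \hat\zeta g_k$). The main obstacle — really the only subtle point — is making sure the refining problem is genuinely well-posed at each round: one must check $\nabla^k>1$ so that Lemma~\ref{lem: refining}'s hypothesis $\nabla>1$ holds, which is immediate since $(x^k)^Ts^k\le\hat\zeta<1$ forces $\nabla^k = 1/((x^k)^Ts^k)>1$; and one must confirm that the reformulated refining problem still admits a feasible interior starting point so that the inner IF-IPM applies — this is where I would lean on the remark in the excerpt that the framework is ultimately applied to the self-dual embedding model, which supplies such a start. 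Modulo these bookkeeping checks, the argument is the short recursion above.
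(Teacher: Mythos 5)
Your argument is correct and rests on exactly the same ingredients as the paper's proof: Lemma~\ref{lem: refining} applied with $\nabla^k = 1/((x^k)^Ts^k)$, giving the recursion $\nabla^{k+1}\geq (\nabla^k)^2/\hat{\zeta}$ (equivalently your $g_{k+1}\leq \hat{\zeta}\,g_k^2$), plus the bookkeeping that $\nabla^k>1$ and that the refining problem admits a warm start (which the paper handles separately in Theorem~\ref{theo: intial for IR}). The only difference is what each of you does with that recursion: the paper deliberately discards the square, using $(\nabla^{k-1})^2/\hat{\zeta}\geq \nabla^{k-1}/\hat{\zeta}$ to get the geometric bound $\nabla^k\geq \hat{\zeta}^{-k}$ and hence the stated $\Ocal(\log(\zeta)/\log(\hat{\zeta}))$ call count, whereas you unroll the quadratic contraction in full to get $g_k\leq \hat{\zeta}^{\,2^k-1}$ and the strictly stronger count $\Ocal\bigl(\log\bigl(\log(\zeta)/\log(\hat{\zeta})\bigr)\bigr)$, of which the theorem's bound is a loose corollary. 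Your version proves more for the same effort; the paper's weakening buys only a slightly simpler closed form, and since the overall complexity in the paper is driven by $\Ocal(L)$ refinement rounds, your sharper count would actually improve the $L$-dependence reported downstream (e.g., in Table~\ref{tab:QIPMs complexity}) to a logarithmic factor. The factor-of-$n$ discrepancy between $x^Ts\leq\zeta$ and $x^Ts/n\leq\zeta$ that you flag is real but harmless, as you say, and the paper glosses over it in the same way.
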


\begin{proof}
Based on Lemma~\ref{lem: refining}, we have
$$(x^{k+1})^Ts^{k+1}\leq \hat{\zeta}((x^{k})^Ts^{k})^2\leq \hat{\zeta}^{2^k-1},$$
because $(x^{1})^Ts^{1}\leq \hat{\zeta}$. Thus, we have $(x^{k+1})^Ts^{k+1}\leq \zeta$ for $k\geq \frac{\log\log(\zeta)}{\log\log(\hat{\zeta})}$.
\end{proof}

As Theorem 6.2 shows, the proposed IR-IF method enjoys quadratic convergence of the optimality gap.

\begin{corollary}
The total time complexity of finding an exact optimal solution for LO problems as in Definition~\ref{def: LO} with IR-IF-QIPM of Algorithm~\ref{alg:iterative refinement} with Algorithm~\ref{alg: IF-QIPM} as limited precision solver is $$\tilde{\Ocal}_{n,\kappa_Q,\omega,\mu^0,L}(n^{2.5}L\kappa_Q^2 \|Q\|\omega^5).$$ 
Furthermore, the total number of arithmetic operations for the IR-IF-IPM of Algorithm~\ref{alg:iterative refinement} with Algorithm~\ref{alg: IF-IPM} as limited precision solver is at most
$$\tilde{\Ocal}_{\mu^0,L}(n^{2.5}L\kappa_{Q}\omega^2).$$ 
\end{corollary}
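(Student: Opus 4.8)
The plan is to combine the two previously-established building blocks multiplicatively: the outer iterative-refinement loop (the preceding theorem) contributes an $\Ocal(L)$ factor once $\hat\zeta$ is fixed to a constant such as $10^{-2}$, and the inner limited-precision solver contributes its own per-call complexity from Corollary~\ref{cor: IFQIPM time} (for the quantum case) or from the final theorem of Section~\ref{sec: IF-IPM CGM} (for the classical case). Concretely, I would first invoke the previous theorem with $\zeta = 2^{-\Ocal(L)}$ (the rounding threshold from Section~\ref{sec: Convergence of IF-QIPM}, guaranteeing an exact solution) and $\hat\zeta$ a fixed constant, so that the number of calls to the inner solver is $\Ocal\!\left(\frac{\log \zeta}{\log \hat\zeta}\right) = \Ocal(L)$.

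Next I would bound the cost of a single inner call. For the IF-QIPM case, Corollary~\ref{cor: IFQIPM time} gives, with the target precision of the inner call equal to the constant $\hat\zeta$,
\[
\tilde{\Ocal}_{n,\kappa_Q,\omega,\frac{1}{\mu^k}}\!\left(\sqrt{n}\log\!\left(\frac{\mu^0}{\hat\zeta}\right)\left(n^2 + n\frac{\kappa_Q^2\|Q\|\omega^5}{\hat\zeta^{\,2}}\right)\right);
\]
since $\hat\zeta$ is a constant the $\hat\zeta^{-2}$ disappears into the big-$\Ocal$, the dominant term is $n^{2.5}\kappa_Q^2\|Q\|\omega^5$ (it dominates the $n^{2.5}$ term coming from building/handling the system), and $\log(\mu^0/\hat\zeta)$ is absorbed by the $\tilde{\Ocal}$ and the $\mu^0$ subscript. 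Multiplying by the $\Ocal(L)$ outer calls yields $\tilde{\Ocal}_{n,\kappa_Q,\omega,\mu^0,L}(n^{2.5}L\,\kappa_Q^2\|Q\|\omega^5)$. The classical CGM case is identical in structure: the final theorem of Section~\ref{sec: IF-IPM CGM} gives a per-call cost $\Ocal\!\left(\sqrt{n}\log(\mu^0/\hat\zeta)\, n^2 \kappa_Q\omega^2/\hat\zeta\right) = \tilde{\Ocal}(n^{2.5}\kappa_Q\omega^2)$ with $\hat\zeta$ constant, and multiplying by $\Ocal(L)$ gives $\tilde{\Ocal}_{\mu^0,L}(n^{2.5}L\,\kappa_Q\omega^2)$.

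One subtlety I would need to address carefully — and the point I expect to be the main obstacle — is that the inner solver at refinement step $k$ is applied not to the original data $(A,b,c)$ but to the refining problem $(A,0,\nabla^k s^k)$, whose parameters $\omega$ (a bound on $\|x\|,\|s\|$ along the central-path neighborhood) and $\kappa_Q$ (the condition number of $Q=\mathrm{diag}(A, V^T)$, more precisely $\begin{bmatrix} A & 0\\ 0 & V^T\end{bmatrix}$) must be shown to stay controlled. The matrix $A$ and hence $Q$ and $V$ are unchanged across refinements, so $\kappa_Q$ and $\|Q\|$ are genuinely constant across all $\Ocal(L)$ calls; the content is that after the change of variables putting the refining problem into standard form, the relevant $\omega$ for the refining problem remains polynomially bounded (uniformly in $k$) in terms of the original problem's data — this follows from the boundedness of the central-path neighborhood of the (bounded, via self-dual embedding) reformulated problem, as already used implicitly in Section~\ref{sec: self-dual} and in~\cite{mohammadisiahroudi2022efficient}. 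Once that uniform bound is in hand, the per-call complexities above hold with the same $\omega,\kappa_Q,\|Q\|$ for every $k$, and the multiplication by $\Ocal(L)$ is immediate. I would close by noting that, as in~\cite{mohammadisiahroudi2022efficient}, the arithmetic for updating $x^{k+1}, y^{k+1}$ and forming the refining data at each step is $\Ocal(mn) = \Ocal(n^2)$, which is dominated by the inner-solver cost and therefore does not affect the stated bounds.
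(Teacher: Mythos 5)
Your proposal is correct and follows essentially the argument the paper intends (the corollary is left without an explicit proof there): fix $\hat\zeta$ to a constant so the preceding theorem gives $\Ocal(L)$ calls to the limited-precision solver, bound each call by Corollary~\ref{cor: IFQIPM time} (resp.\ the CGM theorem of Section~\ref{sec: IF-IPM CGM}) with $\hat\zeta$ absorbed as a constant, and multiply. The subtlety you flag about the refining problems' parameters is exactly the point the paper handles via Theorem~\ref{theo: intial for IR} and the subsequent observation that the scaled data $\nabla^k\begin{bmatrix}-X^0A^T & S^0V\end{bmatrix}$ leaves $\kappa_Q$, $\|Q\|$ and the initial distance to the central path unchanged across refinement steps, so your treatment is consistent with theirs.
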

The refining problem as described in \eqref{eq: refining problem} is not in the standard form. However by the change of variables 
$$(\xhat',\yhat',\shat'):=(\xhat+\nabla x,\yhat,\shat),$$
the standard reformulation of the refining problem is defined as  
\begin{equation} \label{eq: revised refining problem}
    \begin{aligned}
    \min_{\xhat}\  \nabla s^T&\xhat'- (\nabla)^2 s^Tx, \\
    {\rm s.t. }\;\;
    A\xhat' &= \nabla b, \\
    \xhat' &\geq 0,
    \end{aligned}
    \qquad \qquad 
    \begin{aligned}
    \max_{\yhat',\shat'} \  &\nabla b^T\yhat'- (\nabla)^2 s^Tx,\ \  \\
    {\rm s.t. }\;\;
    &A^T\yhat' +\shat' = \nabla s,\\
    &\shat' \geq 0.
    \end{aligned}
\end{equation}
At each iteration of IR, we are applying a feasible IPM to solve the refining problem \eqref{eq: revised refining problem}. Thus, we need an initial feasible interior solution for the refining problem. We are assuming that we have an interior feasible solution, $(x^0,y^0,s^0)$, for the original problem. The following theorem shows that the solution $(\nabla^k(x^0-x^k),\nabla^k(y^0-y^k),\nabla^ks^0)$ is a valid initial solution for IF-IPM to solve the refining problem at each iteration of IR methods.
\begin{theorem}\label{theo: intial for IR}
    Given an interior feasible solution $(x^0,y^0,s^0)$ for the original problem, and $(x^k, y^k, s^k)$ is the solution generated at iteration $k$ of the IR method, then $(\nabla^k(x^0-x^k),\nabla^k(y^0-y^k),\nabla^ks^0)$ is an interior feasible solution for the refining problem \eqref{eq: refining problem}.
\end{theorem}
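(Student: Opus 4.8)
The plan is a direct substitution argument: I would plug the candidate point into every constraint of the refining problem associated with iteration $k$ and reduce each check to the feasibility of $(x^0,y^0,s^0)$ and of $(x^k,y^k,s^k)$ for the original pair $(A,b,c)$.

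First I would pin down the refining problem at iteration $k$: by construction it is \eqref{eq: refining problem} instantiated with $(x,y,s)=(x^k,y^k,s^k)$ and $\nabla=\nabla^k$, so its primal feasible set is $\{\hat x : A\hat x=0,\ \hat x\geq-\nabla^k x^k\}$ and its dual feasible set is $\{(\hat y,\hat s): A^T\hat y+\hat s=\nabla^k s^k,\ \hat s\geq 0\}$. I would also record that $(x^k,y^k,s^k)$ is feasible for $(A,b,c)$: this holds for $k=1$ because Algorithm~\ref{alg: IF-IPM} (resp. Algorithm~\ref{alg: IF-QIPM}) is a feasible method, and it is propagated to all $k$ by the update $x^{k+1}=x^k+\frac{1}{\nabla^k}\hat x^k$, $y^{k+1}=y^k+\frac{1}{\nabla^k}\hat y^k$ together with Lemma~\ref{lem: refining}. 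Hence $Ax^k=b$, $A^Ty^k+s^k=c$, $(x^k,s^k)>0$; similarly $Ax^0=b$, $A^Ty^0+s^0=c$, $(x^0,s^0)>0$; and $\nabla^k>0$.

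Next I would check the primal side for $\hat x:=\nabla^k(x^0-x^k)$: linearity gives $A\hat x=\nabla^k(Ax^0-Ax^k)=\nabla^k(b-b)=0$, and $\hat x+\nabla^k x^k=\nabla^k x^0>0$, i.e.\ $\hat x>-\nabla^k x^k$, which is the strict form of the bound constraint. Then I would check the dual side for $(\hat y,\hat s):=(\nabla^k(y^0-y^k),\nabla^k s^0)$: using $A^Ty^0+s^0=c$ and $A^Ty^k+s^k=c$ we obtain $A^T\hat y+\hat s=\nabla^k\bigl(A^Ty^0-A^Ty^k+s^0\bigr)=\nabla^k\bigl((c-s^0)-(c-s^k)+s^0\bigr)=\nabla^k s^k$, which matches the dual equality, while $\hat s=\nabla^k s^0>0$. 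These four facts exhibit $(\hat x,\hat y,\hat s)$ in the strict interior of the feasible region of \eqref{eq: refining problem}, which is exactly the claim.

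I do not expect a real obstacle: the statement is essentially a bookkeeping check. The only points deserving attention are (i) that the nonnegativity constraint of the refining problem is the shifted one $\hat x\geq-\nabla^k x^k$ rather than $\hat x\geq 0$, so the quantity one must certify positive is $\nabla^k x^0$, which the interiority of $x^0$ supplies; and (ii) that one should verify the strict inequalities, not merely the weak ones, so that the constructed point is genuinely an interior point usable as a warm start for the IF-IPM applied to the refining problem.
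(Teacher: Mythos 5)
Your proposal is correct and follows essentially the same route as the paper: a direct substitution check showing $A(\nabla^k(x^0-x^k))=0$, $A^T(\nabla^k(y^0-y^k))+\nabla^k s^0=\nabla^k s^k$, and strict positivity of $\nabla^k x^0$ and $\nabla^k s^0$. Your explicit justification that $(x^k,y^k,s^k)$ remains feasible for the original problem across IR iterations is a small bookkeeping point the paper leaves implicit, but the argument is the same.
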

\begin{proof}
    To check feasibility, we have
    \begin{align*}
        A (\nabla^k(x^0-x^k))&=\nabla^k( A x^0-A x^k)=\nabla^k(b-b)=0\\
        A^T (\nabla^k(y^0-y^k))+\nabla^k s^0&=\nabla^k( A^T (y^0-y^k)+s^0)=\nabla^k(c-A^Ty^k)=\nabla^ks^k.
    \end{align*}
    The solution $\nabla^k(x^0-x^k)$ is in the interior of the feasible region of the primal refining problem, since $x^0>0$ and $\nabla^k(x^0-x^k)>-\nabla^k x^k$. On the dual side, it is also strictly feasible since $\nabla^ks^0>0$. The proof is complete.
\end{proof}

Thus the initial solution for the standard reformulation \eqref{eq: revised refining problem} is $(\nabla^kx^0,\nabla^k(y^0-y^k),\nabla^ks^0)$. 
This solution has a similar distance to the central path as the initial solution for the original problem since the central path parameter $(\mu^0)^k=\frac{(\nabla^k)^2(x^0)^Ts^0}{n}=(\nabla^k)^2\mu^0$, and
$$\left\|\frac{(\nabla^k)^2(x^0)^Ts^0}{(\mu^0)^k}-e\right\|=\left\|\frac{(x^0)^Ts^0}{\mu^0}-e\right\|.$$
The coefficient matrix of the OSS system at the first iteration of IF-IPM (or IF-QIPM) at iteration $k$ of Algorithm~\ref{alg:iterative refinement} is
$$\begin{bmatrix}
    -\nabla^k X^0A^T& \nabla^k S^0V
\end{bmatrix}=\nabla^k \begin{bmatrix}
    -X^0 A^T& S^0 V
\end{bmatrix}.$$
Thus, the condition number of the first OSS system is the same for all iterations of the IR method. 
In the next section, we discuss how we apply IR-IF-IPM to the SDE formulation when we do not have an interior feasible solution for the original problem.

\section{IF-IPM for SDE Model} \label{sec: self-dual}
%
The proposed IF-IPM requires an initial feasible interior solution.
In practice, such a feasible interior solution is not available.
In this case, the SDE formulation \cite{Roos2005_Interior, Ye1994_iteration} can be used where an all-one vector $e$ is a feasible interior point. 
The canonical formulation is more appropriate for the direct use of the SDE formulation in the proposed IF-IPM. 
We use the conical formulation of the LO problem as in Definition~\ref{def: LO: conocical}. 
We can derive the self-dual model as
\begin{equation}\label{eq: self-dual}
\begin{aligned}
    &\min \quad (n'+m'+2)\gamma \\
     &\begin{matrix}
    \st & &A'x &-b'\tau  &+\bar{b}\gamma&\geq 0,\phantom{(n'+m'+1)}\\
    &-{A'}^Ty & &+c'\tau&+ \bar{c}\gamma&\geq 0,\phantom{(n'+m'+1)}\\
    &{b'}^T y& -{c'}^T x & & + \bar{o}\gamma&\geq 0,\phantom{(n'+m'+1)}\\
    &-\bar{b}^T y&-\bar{c}^T x&-\bar{o} \tau& &\geq  -(n'+m'+2),\\
    \end{matrix}\\
    &\qquad x\geq 0,\tau\geq 0,y\geq 0{,\; \rm  and }\; \gamma\geq 0,
\end{aligned}
\end{equation}
where $\bar{b}=b'-A'e+e$, $ \bar{c}={A'}^Te+e-c'$, and $\bar{o}=1+{c'}^Te-{b'}^Te$. 
One can verify that the dual problem of~\eqref{eq: self-dual} is itself~\cite{Roos2005_Interior}. 
We write problem~\eqref{eq: self-dual} in the standard format by introducing slack variables $(u,s,\phi,\rho)$ as
\begin{equation}\label{eq: self-dual standard}
\begin{aligned}
    &\min\quad (n'+m'+2)\gamma \\
    &\begin{matrix}
    \st & &A'x &-b'\tau  &+\bar{b}\gamma&-u&= 0,\phantom{(n'+m'+1)}\\
    &-{A'}^Ty & &+c'\tau&+ \bar{c}\gamma&-s&= 0,\phantom{(n'+m'+1)}\\
    &{b'}^T y& -{c'}^T x & & + \bar{o}\gamma&-\phi&= 0,\phantom{(n'+m'+1)}\\
    &-\bar{b}^T y&-\bar{c}^T x&-\bar{o} \tau& &-\rho&=  -(n'+m'+2),\\
    \end{matrix}\\
    &\qquad x\geq 0,\tau\geq 0,y\geq 0, \gamma \geq 0,s\geq 0,u\geq 0, \phi \geq 0, \rho \geq 0.
\end{aligned}
\end{equation}
One can verify that $(y^0,x^0,\tau^0,\gamma^0,u^0,s^0,\phi^0,\rho^0)=(e,e,1,1,e,e,1,1)$ is a feasible interior solution of problem~\eqref{eq: self-dual standard}.
Based on the Strong Duality Theorem \cite{dantzig2016linear,Roos2005_Interior}, any optimal solution satisfies $\gamma=0$ and
$$x^Ts+y^T u+\tau \phi +\rho \gamma=0.$$ 
%
 
%
\begin{theorem}[\cite{Roos2005_Interior}] \label{theorem: optimal of self-dual}
 The following statements hold for model~\eqref{eq: self-dual}.
 \begin{enumerate}
     \item Problem~\eqref{eq: self-dual} has a strictly complementary optimal solution $(y^*,x^*,\tau^*, \gamma^*,\allowbreak u^*, s^*,\phi^*,\rho^*)$ such that $\gamma^*=0$, $x^*+s^*>0$, $y^*+u^*>0$, $\tau^*+\phi^*>0$, and $\rho^*>0$.
     \item If $\tau^*>0$, $(\frac{x^*}{\tau^*},\frac{y^*}{\tau^*},\frac{s^*}{\tau^*},\frac{u^*}{\tau^*})$ is a strictly complementary optimal solution of the original LO problem.
     \item If $\tau^*=0$, ${c'}^Tx^*<0$, and ${b'}^Ty^*\leq0$, then the original dual problem is infeasible, and if the original primal problem is feasible, then it is unbounded.
     \item If $\tau^*=0$, ${c'}^Tx^*\geq0$, and ${b'}^Ty^*>0$, then the original primal problem is infeasible, and if the original dual problem is feasible, then it is unbounded.
     \item If $\tau^*=0$, ${c'}^Tx^*<0$, and ${b'}^Ty^*>0$, then both original primal and dual problems are infeasible.
 \end{enumerate}
\end{theorem}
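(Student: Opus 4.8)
The plan is to leverage the self-dual structure of the embedding \eqref{eq: self-dual standard}. Collecting the coefficients of $(y,x,\tau,\gamma)$ into
\[
\mathbb{M}=\begin{bmatrix} 0 & A' & -b' & \bar b\\ -{A'}^T & 0 & c' & \bar c\\ {b'}^T & -{c'}^T & 0 & \bar o\\ -\bar b^T & -\bar c^T & -\bar o & 0\end{bmatrix},
\]
one checks $\mathbb{M}^T=-\mathbb{M}$. Writing $\xi=(y,x,\tau,\gamma)\geq 0$, $\zeta=(u,s,\phi,\rho)\geq 0$ and $\hat q=(0,0,0,-(n'+m'+2))^T$, the model \eqref{eq: self-dual standard} is exactly $\min\{-\hat q^T\xi:\ \mathbb{M}\xi-\hat q=\zeta,\ \xi\geq 0\}$, and a one-line dualization using $\mathbb{M}^T=-\mathbb{M}$ shows that the LP dual of this problem is the problem itself. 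First I would record this self-duality and the interior feasible point $(e,e,1,1)$ exhibited in the excerpt, so that both the primal and its dual are strictly feasible.

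For statement~1, skew-symmetry gives $\xi^T\mathbb{M}\xi=0$, hence for every feasible point $\xi^T\zeta=-\xi^T\hat q=(n'+m'+2)\gamma\geq 0$; thus the objective is nonnegative on the feasible set and, being feasible and bounded below, the problem attains its optimum. Because the problem equals its own dual, strong duality collapses to $v^*=-v^*$, so $v^*=0$. Consequently $\gamma^*=0$ at every optimal $\xi^*$, and complementary slackness forces $x^*_is^*_i=0$, $y^*_ju^*_j=0$ for all $i,j$, and $\tau^*\phi^*=\gamma^*\rho^*=0$. The Goldman--Tucker theorem, applied to the self-dual pair, then produces a strictly complementary optimal solution, i.e. $\xi^*+\zeta^*>0$ componentwise; this reads $x^*+s^*>0$, $y^*+u^*>0$, $\tau^*+\phi^*>0$, and, since $\gamma^*=0$, $\rho^*>0$.

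For statements~2--5 I would simply read off the four constraint blocks at such an optimum, where $\gamma^*=0$. If $\tau^*>0$, dividing the first two blocks by $\tau^*$ yields $A'(x^*/\tau^*)\geq b'$ and ${A'}^T(y^*/\tau^*)\leq c'$ with the scaled vectors nonnegative, while $(x^*/\tau^*)^T(s^*/\tau^*)=0$ and the strict complementarity are preserved under the scaling, giving statement~2. If $\tau^*=0$, the first three blocks read $A'x^*\geq 0$, ${A'}^Ty^*\leq 0$, and (using $\tau^*+\phi^*>0$, hence $\phi^*>0$) ${b'}^Ty^*>{c'}^Tx^*$; then $x^*$ with ${c'}^Tx^*<0$ is a Farkas certificate that the dual is infeasible and a direction of unboundedness for a feasible primal, $y^*$ with ${b'}^Ty^*>0$ is a certificate that the primal is infeasible and a direction of unboundedness for a feasible dual, and $\phi^*>0$ forbids the pattern ${c'}^Tx^*\geq 0$ together with ${b'}^Ty^*\leq 0$; the three remaining sign patterns are precisely statements~3,~4, and~5.

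The main obstacle is the step $v^*=0$ (equivalently $\gamma^*=0$): it is where the construction pays off and it needs the skew-symmetry identity together with careful sign bookkeeping confirming that the objective and the right-hand side sit in exactly the positions that make the dual identical to the primal, so that strong duality forces $v^*=-v^*$. A secondary point of care is matching the Goldman--Tucker partition to the complementarity pairs $(x,s)$, $(y,u)$, $(\tau,\phi)$, $(\gamma,\rho)$, which amounts to identifying each of $u,s,\phi,\rho$ as the dual variable of the corresponding block of $(y,x,\tau,\gamma)$ under the self-dual identification.
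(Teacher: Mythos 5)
Your proof is correct and is essentially the standard argument for the self-dual embedding theorem: the paper itself gives no proof, citing \cite{Roos2005_Interior}, and your derivation (skew-symmetry of the coefficient matrix, self-duality forcing optimal value zero hence $\gamma^*=0$, Goldman--Tucker for strict complementarity, and the case analysis on $\tau^*$ using $\phi^*>0$ to exclude the fourth sign pattern) is exactly the proof found in that reference. The only cosmetic caveat is your reuse of the symbol $\zeta$ for the slack vector, which collides with the paper's optimality tolerance.
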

The feasible Newton system for this formulation is
\footnotesize
\begin{equation}\label{eq: newton system of self-dual}
\arraycolsep=1.3pt
\begin{aligned} 
    \begin{array}{rrrrrrrrl}
     &A'\Delta x^k &-b'\Delta \tau^k  &+\bar{b}\Delta\gamma^k&-\Delta u^k&&&&= 0,\\
    -{A'}^T\Delta y^k & &+c'\Delta \tau^k &+ \bar{c}\Delta\gamma^k&&-\Delta s^k&&&= 0,\\
    {b'}^T \Delta y^k& -{c'}^T \Delta x^k & & + \bar{o}\Delta \gamma^k&&&-\Delta \phi^k&&= 0,\\
    -\bar{b}^T \Delta y^k&-\bar{c}^T \Delta x^k&-\bar{o} \Delta \tau^k  &&&& &-\Delta\rho^k&= 0,\\
     & S^k \Delta x^k &&&&+ X^k\Delta s^k&&& = \beta \mu^k e- X^k s^k,\\
     U^k\Delta y^k&&&&+Y^k\Delta u^k&&&&=\beta \mu^k e -Y^k u^k, \\
     &&\phi^k\Delta\tau^k&&&& +\tau^k\Delta\phi&& = \beta\mu^k- \tau^k\phi^k, \\
     &&&\rho^k\Delta\gamma^k &&&& +\gamma^k\Delta\rho^k&=\beta\mu^k-\gamma^k\rho^k.
    \end{array}
\end{aligned}
\end{equation}
\normalsize
To derive the OSS system for Newton system~\eqref{eq: newton system of self-dual}, we define
\begin{equation}\label{eq: self-dual matrices}
\begin{aligned}
    \mathcal{A}&=\begin{bmatrix}
    I &\mathbf{0}&\mathbf{0} &\mathbf{0}&\mathbf{0} &-A'&b' &-\bar{b}\\
    \mathbf{0}&I&\mathbf{0}&\mathbf{0}&{A'}^T&\mathbf{0}&-c&-\bar{c}\\
    \mathbf{0}&\mathbf{0}& 1 &\mathbf{0}&-{b'}^T &{c'}^T &\mathbf{0}&-\bar{o}\\
    \mathbf{0}&\mathbf{0}&\mathbf{0}& 1 &\bar{b}^T&\bar{c}^T &\bar{o}^T &\mathbf{0} \\
    \end{bmatrix},\ \mathcal{R}=\begin{bmatrix}
    \beta \mu^k e-Y^ku^k\\
    \beta \mu^k e- X^ks^k \\
    \beta \mu^k-\tau^k\phi^k\\
    \beta \mu^k-\gamma^k\rho^k
    \end{bmatrix},\\
    \mathcal{D}&=\begin{bmatrix}
    Y^k&\mathbf{0}&\mathbf{0}&\mathbf{0}&U^k&\mathbf{0}&\mathbf{0}&\mathbf{0}\\
    \mathbf{0}&X^k&\mathbf{0}&\mathbf{0}&\mathbf{0}&S^k&\mathbf{0}&\mathbf{0}\\
    \mathbf{0}&\mathbf{0}&\tau^k&\mathbf{0}&\mathbf{0}&\mathbf{0}&\phi^k&\mathbf{0}\\
    \mathbf{0}&\mathbf{0}&\mathbf{0}&\gamma^k&\mathbf{0}&\mathbf{0}&\mathbf{0}&\rho^k
    \end{bmatrix},\\
    \Delta \mathcal{X}&=(\Delta u^k; \Delta s^k; \Delta \phi ^k; \Delta\rho^k; \Delta y^k; \Delta x^k; \Delta \tau^k; \Delta \gamma^k),
\end{aligned}
\end{equation}
where $\mathbf{0}$ is the all-zero matrix, and ``;" indicates that the corresponding column vectors are vertically concatenated. 
Then, the Newton system~\eqref{eq: newton system of self-dual} can be simplified as
\begin{equation}\label{eq: newton system of self-dual2}
\begin{aligned}
   \Delta \mathcal{X} \in \text{Null}(\mathcal{A}),\ \mathcal{D}\Delta \mathcal{X}=\mathcal{R}.
\end{aligned}
\end{equation}
The basis for the null space of $\mathcal{A}$, as $\mathcal{A}$ includes an identity matrix, is directly given by the columns of 
$$
    \mathcal{V}=
    \begin{bmatrix}
    \mathbf{0}  &-A'        &b'         &-\bar{b}\\
    {A'}^T      &\mathbf{0} &-c'        &-\bar{c}\\
    -{b'}^T     &{c'}^T     &\mathbf{0} &-\bar{o}\\
    \bar{b}^T   &\bar{c}^T  &\bar{o}^T  &\mathbf{0} \\
    -I          &\mathbf{0} &\mathbf{0} &\mathbf{0}\\
    \mathbf{0}  &-I         &\mathbf{0} &\mathbf{0}\\
    \mathbf{0}  &\mathbf{0} & -1        &\mathbf{0}\\
    \mathbf{0}  &\mathbf{0} &\mathbf{0} & -1 
    \end{bmatrix}.
$$
In this case, our method does not require Gaussian elimination to derive a basis for the null space of the coefficient matrix, which saves preprocessing cost. 
The OSS for this formulation is
\begin{equation}\label{eq: OSS of self-dual}
\mathcal{D}\mathcal{V}\mathcal{\lambda}=\mathcal{R},
\end{equation}
where $\mathcal{\lambda}\in\mathbb{R}^{n'+m'+2}$ and the size of the system will be $n'+m'+2$. 
We can calculate the Newton direction by $\Delta \mathcal{X}=\mathcal{V}\mathcal{\lambda}$. 
Considering $\tilde{\lambda}$ as an inexact solution of the system~\eqref{eq: OSS of self-dual}, the inexact solution $\widetilde{\Delta \mathcal{X}}$ is a feasible direction since $\widetilde{\Delta \mathcal{X}}\in \text{Null}(\mathcal{A})$. 
A convergent IF-IPM requires $\|r\|\leq \eta \mu$ where $r=\mathcal{D}\mathcal{V}\tilde{\lambda}-\mathcal{D}\mathcal{V}\lambda$. 
Thus, the error bound $\epsilon=\frac{\eta\mu}{\|\mathcal{D}\mathcal{V}\|}$ is needed.
%
\begin{lemma}\label{lemma: oss for self-dual}
Let $( u,  s, \phi ,\rho,  y, x, \tau, \gamma)\in \mathcal{PD}^0$, then the following statements hold.
\begin{enumerate}
    \item Systems~\eqref{eq: OSS of self-dual} and~\eqref{eq: newton system of self-dual} are equivalent.
    \item System~\eqref{eq: OSS of self-dual} has a unique solution.
    \item Any solution of system~\eqref{eq: OSS of self-dual} satisfies $$(\Delta x)^T\Delta s+(\Delta y)^T\Delta u+\Delta \tau \Delta \phi+\Delta \gamma \Delta \rho =0.$$
\end{enumerate}
\end{lemma}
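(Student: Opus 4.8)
<br>

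The proof of Lemma~\ref{lemma: oss for self-dual} follows essentially the same pattern used earlier in the paper for the standard-form OSS, adapted to the self-dual block structure defined in~\eqref{eq: self-dual matrices}. For part~(1), the plan is to mimic the derivation of~\eqref{eq: OSS} from~\eqref{eq:newton system}: observe that $\mathcal{A}$ contains an embedded identity in its first four block-columns, so its columns together with the columns of $\mathcal{V}$ span $\mathbb{R}^{2(n'+m'+2)}$, and $\mathcal{A}\mathcal{V}=0$ by direct block multiplication (each $\pm A'$, $\pm b'$, etc.\ cancels against the identity blocks in $\mathcal{V}$, exactly as $A_N-A_N=0$ in Lemma~\ref{lemma: orthogonality}). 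Hence the columns of $\mathcal{V}$ form a basis for $\text{Null}(\mathcal{A})$, so writing $\Delta\mathcal{X}=\mathcal{V}\lambda$ captures precisely the condition $\Delta\mathcal{X}\in\text{Null}(\mathcal{A})$; substituting into $\mathcal{D}\Delta\mathcal{X}=\mathcal{R}$ gives~\eqref{eq: OSS of self-dual}, and conversely any solution of~\eqref{eq: OSS of self-dual} yields a solution of~\eqref{eq: newton system of self-dual2}, which is just~\eqref{eq: newton system of self-dual} rewritten. This establishes the equivalence.

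For part~(2), I would invoke uniqueness of the Newton system for the self-dual embedding. Since $(u,s,\phi,\rho,y,x,\tau,\gamma)\in\mathcal{PD}^0$, all the diagonal entries of $\mathcal{D}$ (namely the entries of $Y^k,X^k,\tau^k,\gamma^k,U^k,S^k,\phi^k,\rho^k$) are strictly positive, so the full self-dual Newton system~\eqref{eq: newton system of self-dual} has a unique solution by the standard argument (the same cited fact from~\cite{Roos2005_Interior} invoked for Corollary~\ref{lemma: unique solution}); equivalently, $\mathcal{D}\mathcal{V}$ is a square $(n'+m'+2)\times(n'+m'+2)$ matrix whose nonsingularity follows because $\mathcal{V}$ has full column rank and $\mathcal{D}$ is nonsingular on the relevant subspace — more precisely, $\mathcal{D}\mathcal{V}\lambda=0$ forces $\mathcal{V}\lambda\in\text{Null}(\mathcal{D})=\{0\}$ since $\mathcal{D}$ is a positive diagonal matrix, hence $\lambda=0$. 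Then part~(1) transfers uniqueness to~\eqref{eq: OSS of self-dual}.

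For part~(3), the orthogonality claim, the key observation is that the quadratic form $(\Delta x)^T\Delta s+(\Delta y)^T\Delta u+\Delta\tau\,\Delta\phi+\Delta\gamma\,\Delta\rho$ is exactly the inner product $(\Delta\mathcal{X}')^T(\Delta\mathcal{X}'')$ of the ``primal'' half of $\Delta\mathcal{X}$ against the ``slack'' half, which can be written as $\Delta\mathcal{X}^T J\,\Delta\mathcal{X}$ for a fixed symmetric permutation-type matrix $J$ pairing $(x,s),(y,u),(\tau,\phi),(\gamma,\rho)$. Writing $\Delta\mathcal{X}=\mathcal{V}\lambda$, this equals $\lambda^T\mathcal{V}^TJ\mathcal{V}\lambda$, and the task reduces to checking $\mathcal{V}^TJ\mathcal{V}=0$ — equivalently that the skew-symmetry of $\mathcal{A}$'s ``off-diagonal'' part (the self-dual model is skew-symmetric: the dual of~\eqref{eq: self-dual} is itself) makes this bilinear form vanish on $\text{Null}(\mathcal{A})$. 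Concretely, since $\Delta\mathcal{X}\in\text{Null}(\mathcal{A})$ and the block $[\,-A'\ b'\ -\bar b;\ \dots\,]$ appearing in $\mathcal{V}$ is the negative transpose of the corresponding block of $\mathcal{A}$, the pairing telescopes to zero; I would verify this by a short block computation analogous to ``$\Delta x^T\Delta s=0$'' after Corollary~\ref{lemma: unique solution}, or alternatively by noting $\mathcal{A}\Delta\mathcal{X}=0$ and using the skew structure to conclude $(\Delta\mathcal{X})^TJ\Delta\mathcal{X}=0$ directly.

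The main obstacle is the bookkeeping in part~(3): one must set up the pairing matrix $J$ correctly against the ordering of $\Delta\mathcal{X}$ in~\eqref{eq: self-dual matrices} and confirm that $\mathcal{V}^TJ\mathcal{V}=0$ using the exact sign pattern of $\mathcal{V}$; the self-duality of the embedding is what guarantees this, but translating ``the dual of~\eqref{eq: self-dual} is itself'' into the statement $\mathcal{V}^TJ\mathcal{V}=0$ requires care with the signs of $\bar b,\bar c,\bar o$ and the $-I$ blocks. Parts~(1) and~(2) are routine adaptations of the standard-form arguments already given.
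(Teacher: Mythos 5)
The paper states this lemma without proof, leaving it as the direct analogue of Lemma~\ref{lemma: orthogonality}, Lemma~\ref{lemma: orthogonal to newton}, and Corollary~\ref{lemma: unique solution}; your reconstruction is the intended argument. Part~(1) is right: $\mathcal{A}=[\,I\;\;K\,]$ and $\mathcal{V}=\bigl[\begin{smallmatrix}K\\-I\end{smallmatrix}\bigr]$ give $\mathcal{A}\mathcal{V}=K-K=0$, and the dimension count plus the $-I$ block make the columns of $\mathcal{V}$ a basis of $\text{Null}(\mathcal{A})$. Part~(3) is also right, and you correctly identify the mechanism: with $\Delta\mathcal{X}=\mathcal{V}\lambda$ the first (slack) half is $K\lambda$ and the second half is $-\lambda$, so the pairing equals $-\lambda^TK^T\lambda=\lambda^TK\lambda=0$ because $K$ is skew-symmetric --- this is where the self-duality enters, exactly as you say, and the bookkeeping does close.

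The one genuine error is the ``equivalently'' argument in part~(2). The matrix $\mathcal{D}$ is not a square positive diagonal matrix: it is the $(n'+m'+2)\times 2(n'+m'+2)$ block matrix $[\,\mathcal{D}_1\;\;\mathcal{D}_2\,]$ with $\mathcal{D}_1,\mathcal{D}_2$ positive diagonal, so $\text{Null}(\mathcal{D})$ has dimension $n'+m'+2$ and the step ``$\mathcal{V}\lambda\in\text{Null}(\mathcal{D})=\{0\}$'' fails. What you actually need is $\text{Null}(\mathcal{D})\cap\text{Null}(\mathcal{A})=\{0\}$, which does not follow from positivity of $\mathcal{D}$ alone; it follows either by citing uniqueness of the self-dual Newton system (your primary argument, and the route the paper takes for Corollary~\ref{lemma: unique solution}), or directly by combining part~(3) with positivity: if $\mathcal{D}\mathcal{V}\lambda=0$ then each slack component equals a negative positive-diagonal multiple of its partner, so every term of the sum in part~(3) is nonpositive, and since the sum vanishes each term is zero, forcing $\lambda=0$ by the full column rank of $\mathcal{V}$. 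Keep the first argument or replace the second with this corrected version; as written the parenthetical would not survive refereeing.
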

%

The IF-IPM of Algorithm~\ref{alg: IF-IPM} (or IF-QIPM of Algorithm~\ref{alg: IF-QIPM}) can now be applied to the SDE formulation.
%
\begin{theorem}\label{theorem: convergence of IF-IPM for self-dual}
 For IF-IPM of Algorithm~\ref{alg: IF-IPM} (or IF-QIPM of Algorithm~\ref{alg: IF-QIPM}) applied to the SDE formulation, the following statements hold.
 \begin{enumerate}
     \item The sequence $\{\mu_k\}_{k\in \mathbb{N}}$ converges linearly to zero.
     \item For any $k\in \mathbb{N}$, $\mathcal{X}^k\in \mathcal{N}(\theta)$.
     \item After $\Ocal(\sqrt{n'}\log(\frac{1}{\zeta}))$ iterations, $\mathcal{X}^k\in \mathcal{PD}_{\zeta}$.
     \item The iteration complexity of finding an exact optimal solution is $\Ocal(\sqrt{n'}L)$.
 \end{enumerate}
\end{theorem}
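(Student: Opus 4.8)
The plan is to reduce Theorem~\ref{theorem: convergence of IF-IPM for self-dual} to the machinery already established for the standard-form IF-IPM in Section~\ref{sec: IF-IPM}, applied to the self-dual embedding~\eqref{eq: self-dual standard}. First I would observe that~\eqref{eq: self-dual standard} is itself an LO problem in standard form with $\bar{n}=2(n'+m'+2)$ variables and $\bar{m}=n'+m'+2$ equality constraints, whose constraint matrix is full row rank (because of the embedded identity block in $\mathcal{A}$). Hence Definition~\ref{def: LO} applies, and by Lemma~\ref{lemma: oss for self-dual} the OSS system~\eqref{eq: OSS of self-dual} is exactly the OSS system~\eqref{eq: OSS} instantiated for this problem, with coefficient matrix $\mathcal{D}\mathcal{V}$ playing the role of $M$. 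The all-one starting point $(e,e,1,1,e,e,1,1)$ of~\eqref{eq: self-dual standard} lies on the central path (each product $x_is_i$, $y_iu_i$, $\tau\phi$, $\gamma\rho$ equals $1$), so it is in $\mathcal{N}(\theta)$ for every $\theta\in[0,1)$; this supplies the required initial feasible interior point in Step~3 of Algorithm~\ref{alg: IF-IPM}.

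With these identifications in place, parts~1--3 follow directly from Lemma~\ref{lemma: remaining in the neighbor} and Theorem~\ref{theorem: iteration bound} applied verbatim to~\eqref{eq: self-dual standard}: Lemma~\ref{lemma: remaining in the neighbor} gives part~2 ($\mathcal{X}^k\in\mathcal{N}(\theta)$ for all $k$), Theorem~\ref{theorem: iteration bound} gives part~1 (linear convergence of $\mu^k$ to zero) and the iteration bound $\Ocal(\sqrt{\bar n}\log(\mu^0/\zeta))$ for part~3. Here $\mu^0=1$ because of the all-one start, so the bound is $\Ocal(\sqrt{\bar n}\log(1/\zeta))$, and since $\bar n=\Theta(n'+m')$ this is $\Ocal(\sqrt{n'+m'}\log(1/\zeta))$; when the canonical problem comes from a standard-form problem one has $m'=m$ and $n'=n-m$, so $\sqrt{n'+m'}=\Ocal(\sqrt{n'})$ up to constants, matching the stated $\Ocal(\sqrt{n'}\log(1/\zeta))$. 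For part~4, I would invoke the rounding argument cited after Theorem~\ref{theorem: iteration bound} (Chapter~3 of~\cite{Wright1997_Primal}): once $\mu^k\le 2^{-\Ocal(\bar L)}$, where $\bar L$ is the binary input length of the self-dual embedding, an exact optimal solution of~\eqref{eq: self-dual standard} can be recovered by rounding, and then Theorem~\ref{theorem: optimal of self-dual} converts it into an exact (strictly complementary) optimal solution or an infeasibility/unboundedness certificate for the original LO. Since $\bar L=\Ocal(L)$ (the embedding only adds the vectors $\bar b,\bar c,\bar o$, whose bit-length is polynomially bounded by that of $A',b',c'$), setting $\zeta=2^{-\Theta(L)}$ gives the $\Ocal(\sqrt{n'}L)$ iteration bound.

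The main obstacle I expect is not any of the convergence estimates—those are inherited wholesale—but the bookkeeping around the parameter conditions and the input-length accounting. Specifically, I must check that the fixed parameter choice $(\eta,\theta,\beta)=(0.1,0.3,1-0.11/\sqrt{\bar n})$ in Algorithm~\ref{alg: IF-QIPM} still satisfies the three sufficient conditions~\eqref{eq: param con1}--\eqref{eq: param con3} when $n$ is replaced by $\bar n$ (it does, since those conditions are monotone-friendly in the dimension and were verified for generic $n$), and that the $\sqrt{n}$ in $\beta$ is consistently the dimension of the \emph{embedded} problem. I also need to argue carefully that the rounding threshold $2^{-\Ocal(\bar L)}$ for the embedded problem translates back to a meaningful exactness guarantee for the original problem via each of the five cases of Theorem~\ref{theorem: optimal of self-dual}; this is where a clean statement of ``exact optimal solution'' for possibly infeasible or unbounded originals has to be pinned down, but it is standard for self-dual embeddings and adds no essential difficulty.
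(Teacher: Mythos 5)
Your proposal is correct and follows essentially the same route as the paper, which itself gives no separate proof but simply notes that the analysis of Sections~\ref{sec: IF-IPM} and~\ref{sec: IF-QIPM} carries over once the self-dual embedding is recognized as a standard-form LO with full-row-rank constraint matrix, the all-one vector as an exactly centered starting point, and~\eqref{eq: OSS of self-dual} as its OSS. Your extra bookkeeping (dimension $\bar n=2(n'+m'+2)$, $\mu^0=1$, $\bar L=\Ocal(L)$, and the translation back to the original problem via Theorem~\ref{theorem: optimal of self-dual}) is exactly the content the paper leaves implicit.
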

%
 A similar analysis as Section~\ref{sec: IF-QIPM} can be conducted here.
 Thus, the total time complexity of the IR-IF-QIPM applied to the SDE formulation is 
$$\tilde{\Ocal}_{n',\kappa_\mathcal{V},\omega,\mu^0,L}({n'}^{2.5}L\kappa_{\mathcal{V}}^2 \|\mathcal{V}\|\omega^5).$$ 
Furthermore, total arithmetic operations for IF-IPM with CGM is
$$\tilde{\Ocal}_{\mu^0,L}({n'}^{2.5}L\kappa_{\mathcal{V}}\omega^2).$$ 
 It is clear that matrix $\mathcal{V}$ is larger and denser than $Q$; however, $\mathcal{V}$ is only a constant factor larger and denser than $Q$.
 Therefore, the time complexity of solving the SDE formulation in $\tilde{\Ocal}$ notation is the same as that of the original problem with an initial interior feasible solution.

\section{Numerical Experiments}\label{sec: numerical}

The proposed IF-IPMs and IF-QIPMs are implemented in the Python programming language\footnote{\url{https://github.com/qcol-lu/qipm}}. 
Our implementation can be used with both classical and quantum linear system solvers. 
In this section, first, we investigate the performance of quantum devices and simulators to solve linear systems of equations.
To this aim, we report some experiments with IBM and Quantinuum quantum devices and simulators as they use different technologies.
In the next part, we present some illustrative numerical experiments for the proposed IR-IF-QIPM using a quantum simulator of QLSA, a quantum error simulator of QLSAs, and a classical CGM.

\subsection{QLSAs on different quantum devices and simulators}

QLSAs are among the quantum algorithms that require fault-tolerant quantum computers, see also Quantum Fourier Transform, and Shor's Algorithm. 
It is expected that current noisy intermediate-scale quantum (NISQ) devices are not capable of executing QLSAs circuits, as NISQ devices can handle circuits with only limited circuit depth and width. 
However, in this part, we want to explore these barriers and quantify what type of linear systems can be solved on contemporary quantum devices and simulators.

The QLSA used in the analysis of Section~\ref{sec: IF-QIPM} uses QRAM and block-encoding of matrices. 
However, to date, there is no physical implementation of QRAM, and implementing block-encoding efficiently is not possible on current NISQ devices.
Thus we use simple versions of HHL algorithm implemented using QISKIT.
Here, we report numerical experiments of different implementations using IBM and Quantinuum real machines and simulators.
The reason is that these two companies use different technologies.
IBM’s quantum computers are based on superconducting qubits, but Quantinuum uses trapped-ion qubits. 
Although superconducting qubits can perform computations faster, trapped-ion qubits have much longer coherence times and all-to-all connectivity.

\subsubsection{QLSA on IBM Machines and Simulators}
Table~\ref{tab:HHL-real-exp} demonstrates the performance of solving three random linear systems using a simple implementation of HHL algorithm\footnote{\url{https://github.com/Firepanda415/quantum_linear_solvers}}. Using IBMQ Machines and Simulator, we can solve linear systems with dimension $n\leq 8$ and $\kappa\leq 10$. By increasing dimension and condition number the circuit depth increases and cannot be executed on current quantum hardware. Additionally, the solutions provided to small linear systems have considerable error, thus impractical for QIPMs.
\begin{table}[]
\centering
\scriptsize
\resizebox{\textwidth}{!}{\begin{tabular}{|c|c|cc|cc|cc|c|} \hline 
 \multirow{2}{*}{Dimension} & \multirow{2}{*}{Sparsity} & \multicolumn{2}{c|}{Number of gates} & \multicolumn{2}{c|}{Circuit Depth} & \multicolumn{2}{c|}{Relative Error}  & \multirow{2}{*}{Machine} \\                        &                           & Simu.            & Device          & Simu.            & Device    & Simu.            & Device         &                                             \\  \hline 
2                   & 0                         & 90                   & 115               & 397          & 576        & 0.0394             & 0.239           & osaka                \\
4                   & 0.5                       & 425                  & 593               & 2089         & 2817       & 0.0155             & 0.465            & osaka                \\
8                  & 0.78                      & 2318                 & 4339              & 11257        & 19688      & 0.0464             & 0.401     & brisbane           \\ \hline
\end{tabular}}
\caption{Solving three random systems with HHL implementation on IBMQ devices and simulator.}\label{tab:HHL-real-exp}
\end{table}
\subsubsection{QLSA on Quantinuum Machines and Simulators}
We also examine Quantinuum machines and simulators, as trapped-ion qubits have a longer coherence time and the potential to execute longer circuits. Table~\ref{tab quantinuum} presents the results of solving three random linear systems using an implementation of the HHL algorithm\footnote{\url{https://github.com/QCOL-LU/QLSAs}}. As results show the Quantinuum machines have similar performance for solving linear systems of equations. This result further confirms that contemporary NISQ devices are capable of executing QLSAs only for very small linear systems.
\begin{table}[]
\centering
\scriptsize
\resizebox{\textwidth}{!}{\begin{tabular}{|c|c|c|c|c|c|c|}
\hline
 Dimension & Condition Number  & \# Qubits & Circuit Depth & Total Gates & Relative Error \\ \hline
 2 & 1.42 &  4 & 25 & 36  & 0.37 \\ 
 4 & 5.73 &  7 & 359 & 547  & 0.43 \\ 
 8 & 3.96 &  12 & 3538 & 5123 & 0.32 \\ \hline
\end{tabular}}
\caption{Solving three random systems with HHL implementation on Quantinuum H1-1E}
\label{tab quantinuum}
\end{table}
\subsubsection{QISKIT HHL Simulator}
This section provides numerical results for  the QISKIT AQUA quantum simulator. 
The numerical results are run on a workstation with Dual Intel Xeon{\textregistered} CPU E5-2630~@ 2.20 GHz (20 cores) and 64 GB of RAM. 
For the computational experiments, we have developed a Python \texttt{QIPM} package available for public use\footnote{\url{https://github.com/qcol-lu/qipm}}.

IBM has implemented a QLSA, which is similar to the HHL method, without block-encoding and QRAM.
As discussed in previous sections, with the current technology, the number of available qubits in gate-based quantum computers is limited.
One of the main issues with contemporary, NISQ era, quantum computers is that they are not scalable compared to classical computers. 
Currently, larger NISQ devices suffer more from increasing noise and error. 
On the other hand, quantum simulator algorithms are computationally expensive, thus the maximum number of Qubits in a quantum simulator is limited to less than 50.
The main advantage of using a quantum simulator is that we do not need to handle the noise of NISQ devices. 
Despite this, we still need to deal with a high error level due to insufficient qubits and the high cost of QLSA+QTA in order to find high-quality solutions.

We used two post-processing procedures to improve QLSA's performance.
\begin{enumerate}
    \item We first scale the linear system solution such that it satisfies the  equality~$ \|Mz\|=\|\sigma\|$. 
    \item We also check the sign of the linear system solution by comparing it with its negative.
\end{enumerate}
The condition number increases the solution time.
Notably, the dimension of the linear system increases the solution time as a step function for building the quantum circuit.
The dimension of the system must be a power of two.
The simulator expands the system size to the smallest possible power of two.
Table~\ref{tab: condition number and size of quantum circuit} presents the number of qubits needed in quantum circuits for achieving the same precision for a $4x4$ linear systems of equations.
The QISKIT simulator error oscillates between zero and the norm of an actual solution.  
In some cases, the IBM QISKIT simulator fails, and it reports a zero vector as a solution. 
While the simulator has a parameter for tuning precision, it frequently does not reach the predefined precision.
We could not find any meaningful relationship between the precision of the simulator, the condition number of the coefficient matrix, and the dimension of the system.

\begin{table}[]
\caption{Size of the circuit for linear systems with different condition numbers ($N=4$ and $\epsilon\leq10^{-2}$) }
\label{tab: condition number and size of quantum circuit}
\centering
\begin{tabular}{r|c c c c c c c c c c c }
\hline
$\kappa$ &  $2^1$ & $2^2$ & $2^3$ & $2^4$ &$2^5$ &$2^6$ &$2^7$ & $2^8$& $2^9$ & $2^{10}$ & $2^{11}$\\ \hline
   \#qubits &   6  & 6 & 7 & 8 & 9  & 10 & 11 & 12 & 12 & 13 & 15\\\hline
\end{tabular}
\end{table}
This simulator has a somewhat different configuration from the theoretical HHL algorithm. There is no direct parameter for tuning precision. One of the parameters that highly affects the precision is the time of the Hamiltonian simulation $t$. The best value is $t= \frac{\pi}{\lambda_{\max}}$ where the simulator can almost find solutions with a small error for the scaled system. Another value for this parameter leads to considerable error, affecting the convergence of QIPMs. Finding the maximum eigenvalue is not computationally expensive by using the Power Iteration method. In our implementation, we used $t= \frac{\pi}{\lambda_{\max}}$ where the maximum eigenvalue of the NES $\lambda_{\max}$ is calculated by the Power Iteration method. We also need to tune other parameters to increase the precision of HHL simulator and decrease the time of solution. Another parameter affecting the performance of the HHL simulator is the number of ancillae qubits. Figure~\ref{fig: ancila} depicts the time and error of solving randomly generated systems with different numbers of ancillae qubits. We can see that by increasing the number of ancillae qubits, the error decreases, but the time increases. For this case, the appropriate number of ancillae qubits is four, since after that, the error decreases slightly, but time increases rapidly.
\begin{figure}[]
    \centering
    \includegraphics[scale=0.35]{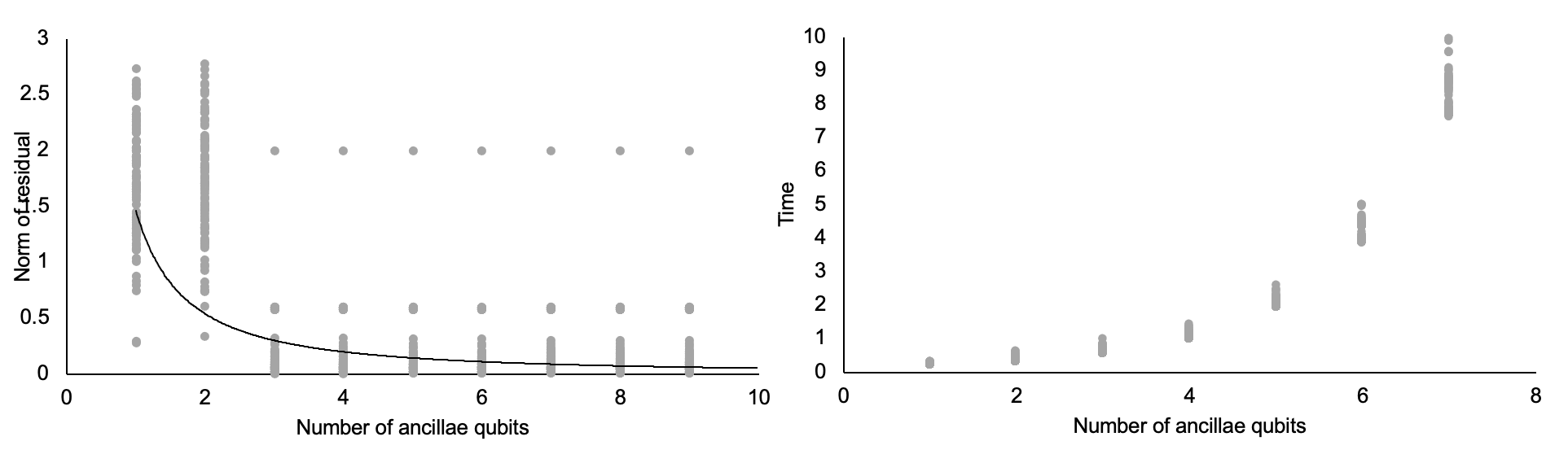}
    \caption{Effect of the number of ancillae qubits on the error and time of the HHL simulator}
    \label{fig: ancila}
\end{figure}
It is necessary to evaluate the performance of HHL simulator based on features of the linear system, such as the condition number, the norm of the coefficient matrix, and the RHS. As expected, Figure~\ref{fig: dimension} shows that the dimension of the system does not affect the error, but the time of solution is increasing like a step function since the HHL simulator builds a system where the dimension is a power of two.
\begin{figure}[]
    \centering
    \includegraphics[scale=0.4]{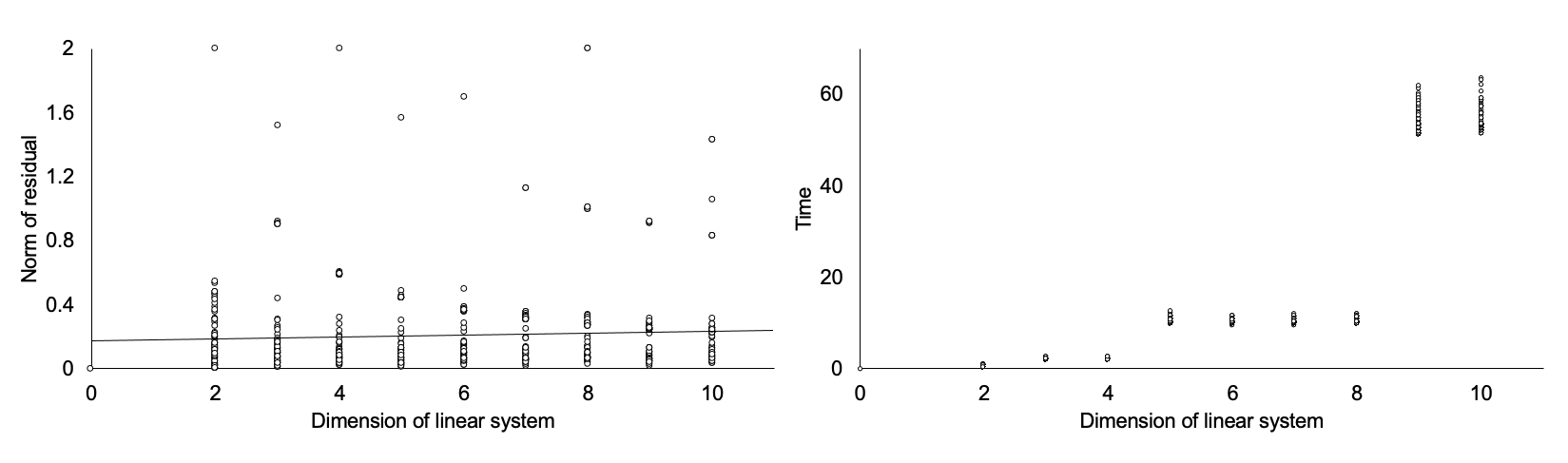}
    \caption{Effect of dimension on the time and the error of the HHL simulator}
    \label{fig: dimension}
\end{figure}
To confirm the theoretical result, Fig~\ref{fig: RHS} illustrates that the norm of the RHS affects the error due to scaling, but it does not affect the time of solution. In theoretical results, it affects the time complexity since we fixed the target error, and QLSAs need time proportional to the norm of the RHS. Contrary to the theoretical analysis of the HHL method, the HHL simulator is almost independent on the condition number of the coefficient matrix as depicted in Figure~\ref{fig: condition number}. The reason is that the number of iterations of the HHL method is $\Ocal(\kappa)$ to get a solution with high probability, and in this simulator, we do not need to replicate the computation since the probability can be calculated numerically.  Figure~\ref{fig: norm of matrix} indicates that the norm of the coefficient matrix does not affect the time and the error of the solution, and indicates that this simulator does not properly represent the theoretical complexity of QLSAs.
\begin{figure}[]
    \centering
    \includegraphics[scale=0.4]{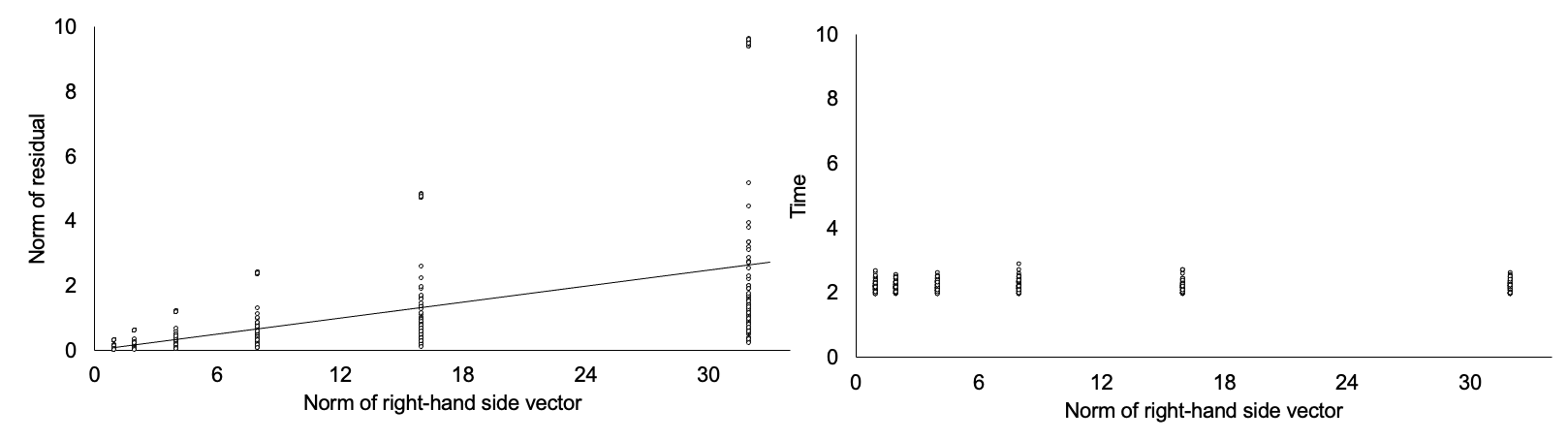}
    \caption{Effect of the norm of RHS on the time and the error of the HHL simulator}
    \label{fig: RHS}
\end{figure}
\begin{figure}[]
    \centering
    \includegraphics[scale=0.4]{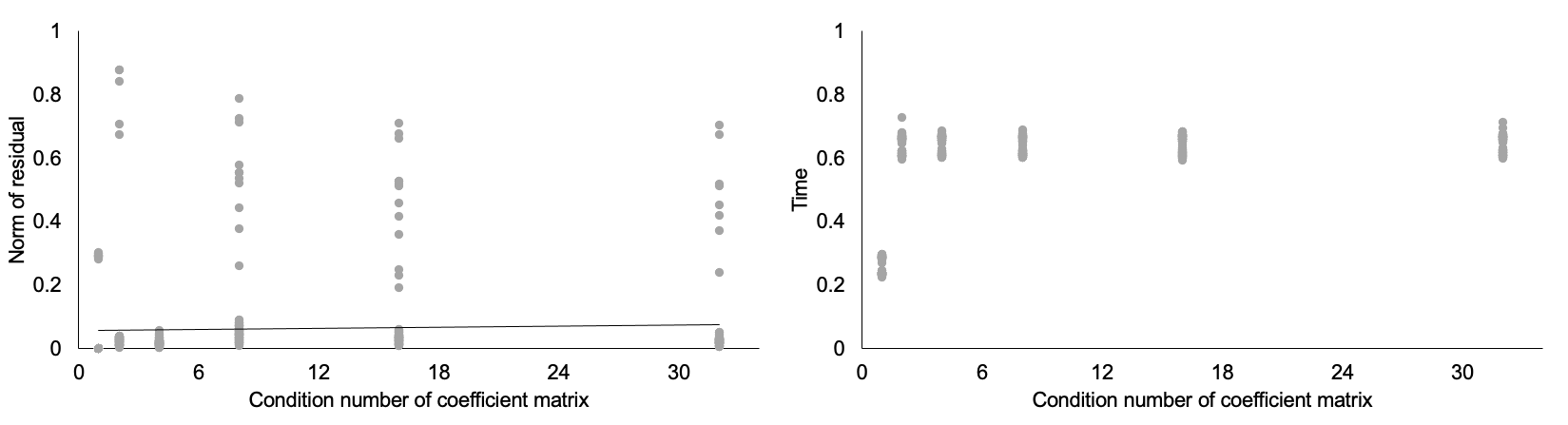}
    \caption{Effect of the condition number on the time and the error of the HHL simulator}
    \label{fig: condition number}
\end{figure}
\begin{figure}[]
    \centering
    \includegraphics[scale=0.38]{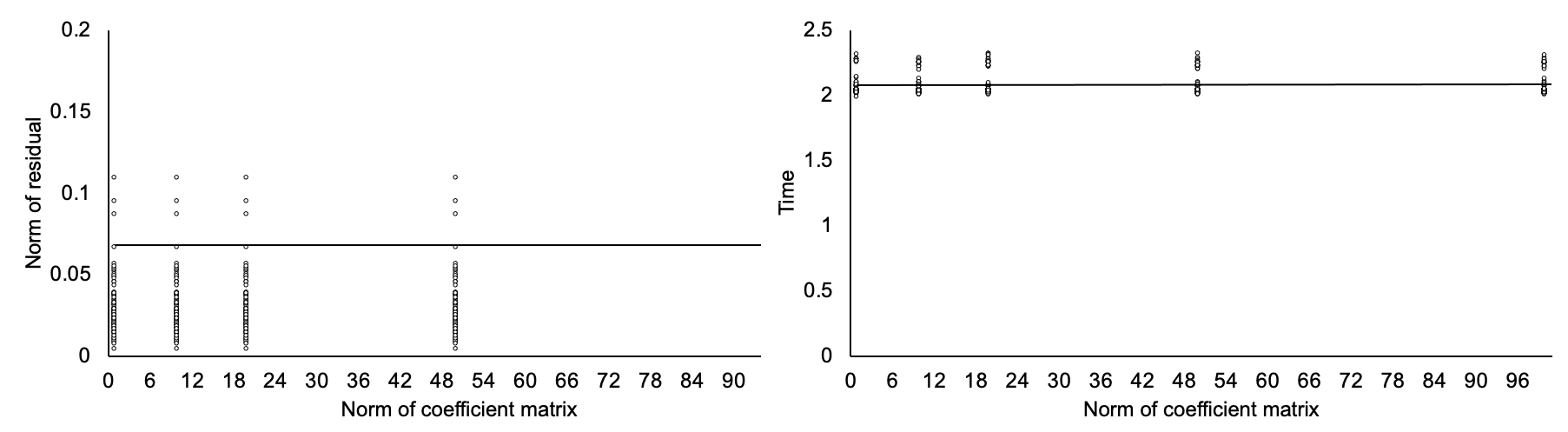}
    \caption{Effect of the norm of coefficient matrices on the time and the error of the HHL simulator}
    \label{fig: norm of matrix}
\end{figure}

\subsection{IF-QIPM using QISKIT Simulator}

As discussed earlier, current quantum simulators can only solve linear systems with limited dimensions and condition numbers. Due to several factors, implementing the proposed IF-QIPM on existing quantum devices and simulators is challenging. Since the OSS system is asymmetric, constructing a larger system with a Hermitian coefficient matrix is necessary to apply QLSAs. Additionally, the absence of an initial interior solution necessitates the use of SDE formulations, which increase the dimensionality of Newton systems too. For instance, solving a linear optimization problem with just two variables would require handling a linear system of dimension 8 when one use QLSAs. Moreover, the errors inherent in simulators and the high condition numbers of Newton systems pose significant challenges for IF-QIPMs on current platforms.

In Figure~\ref{fig:log}, we present the results of solving a randomly generated linear optimization (LO) problem using the Qiskit HHL simulator. The problem is designed with a predetermined interior solution, eliminating the need for a SDE. As shown, the condition number increases throughout the iterations of IF-QIPM, leading to a significant rise in the simulation time of QLSAs. Despite these challenges, IF-QIPM effectively handles errors, maintaining primal and dual infeasibility close to zero. Moreover, within just a few iterations, the solution achieves a precision of $10^{-2}$. 
\begin{figure}[]
    \centering
    \includegraphics[width=0.8\linewidth]{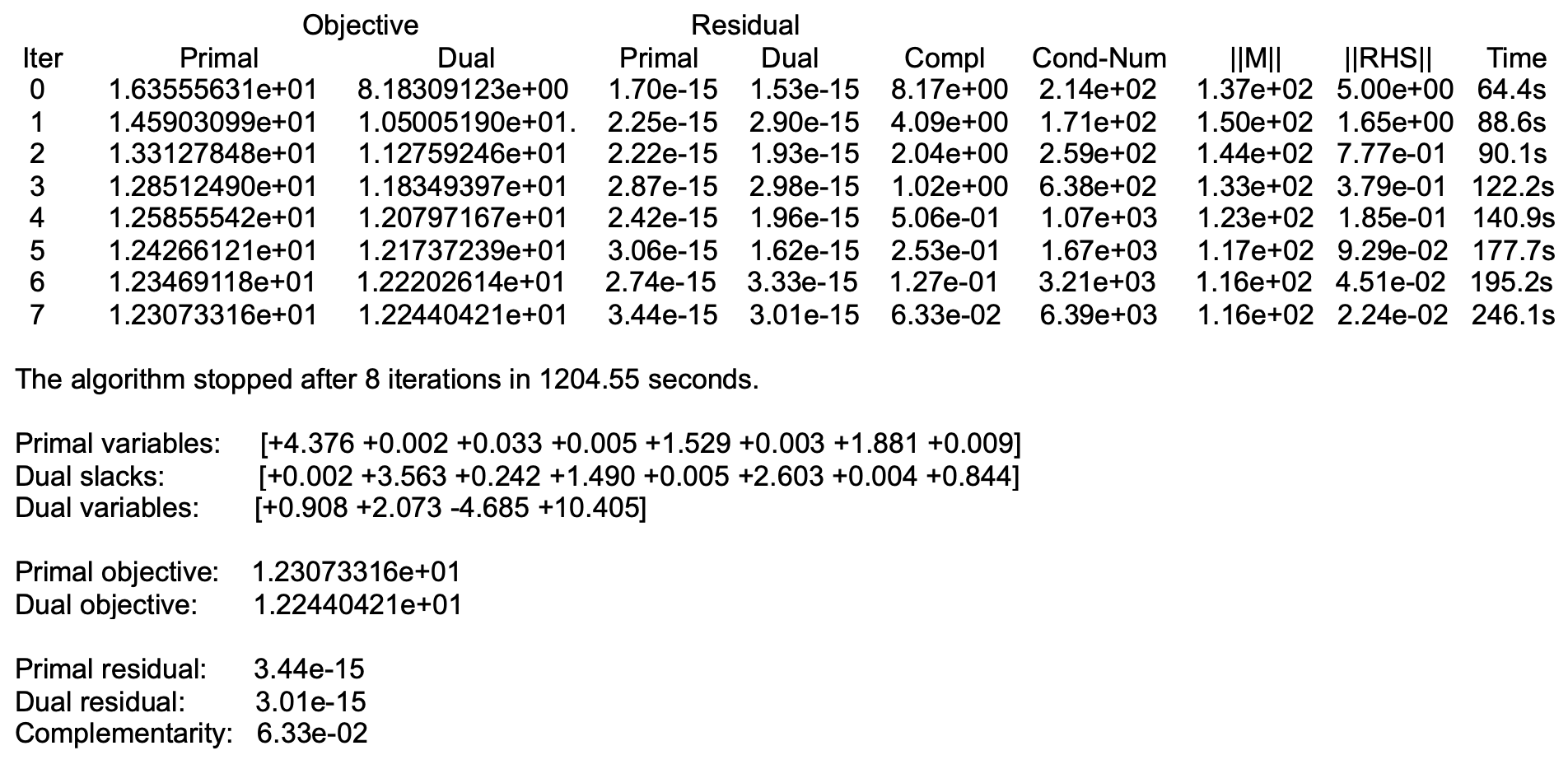}
    \caption{Result of solving a LO problem with 8 variables using QISKIT Simulator}
    \label{fig:log}
\end{figure}

\subsection{Analysis of IR-IF-IPM}
The size of the linear systems and their asymmetric nature in the IF-IPM make it hard to study the performance of IF-QIPM using this quantum solver in our experiments.
In this section, we examine IR-IF-IPM with a classical inexact linear solver.
We conducted the numerical results on a workstation with Dual Intel Xeon{\textregistered} CPU E5-2630~@ 2.20 GHz (20 cores) and 64 GB of RAM. 
We employ the random instance generator of~\cite{generator} to generate LO instances systematically and with desired parameters.
We generated feasible LO instances in the canonical format with $m'=4$, $n'=12$, $\kappa_{A'}=4$ and $\|A'\| = \|b'\| = \|c'\| = 2$.
We also set $\eta = 0.1$ and $\zeta= 10^{-6}$.
Here, we assume that $\zeta$ is the desired precision of the obtained optimal solution of the canonical problem.

\begin{figure}[] \centering
\includegraphics[width =0.5 \linewidth, trim = {.1cm .1cm .1cm .1cm}, clip]{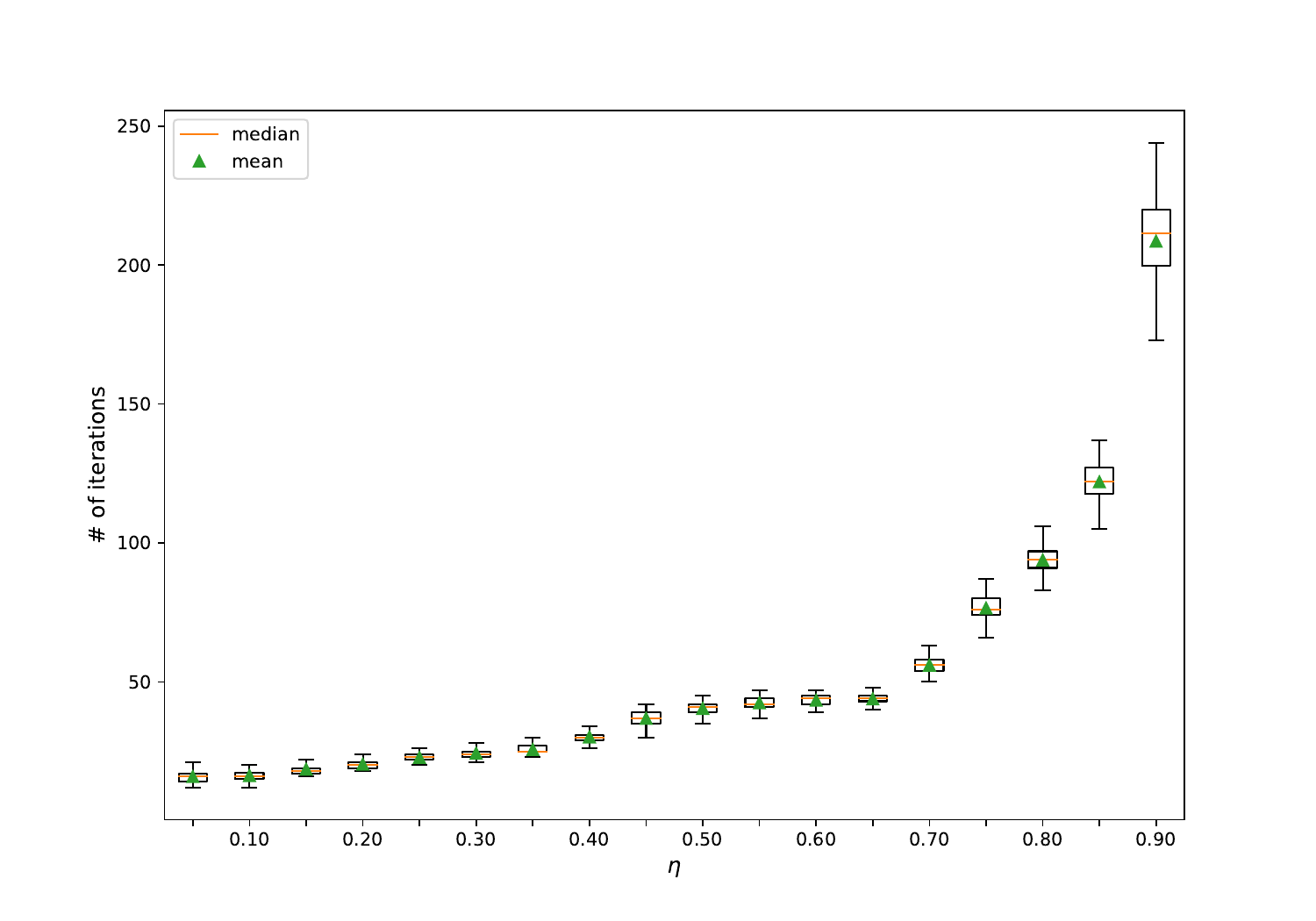}
\caption{The effect of the solver error on the number of iterations of the SDE IF-IPM.} \label{fig: eta_iter}
\end{figure}

Fig.~\ref{fig: eta_iter} illustrates how the noise of the linear system solver affects the number of iterations of the proposed SDE IF-IPM.
The number of iterations of the algorithm increases as the error level increases.
However, the algorithm is relatively stable with respect to $\eta$ while it is less than $0.7$.
The number of iterations increases rapidly as the error parameter converges to $1$.

As the IF-IPM converges to the central optimal solution, the condition number of linear systems solved at every iteration of the algorithm grows to infinity.
The largest condition number of the OSS occurs at the last iterations of IF-IPM.
Fig.~\ref{fig: largest condition number of the self-dual embedding IF-IPM} shows how the largest condition number of the OSS changes in the self-dual IF-IPM with respect to different parameters of the problem and the algorithm.
Fig.~\ref{fig: min_sing_A_cond} indicates that the condition number of the OSS is almost indifferent to the smallest singular value of $A'$.
Here, we adjust the smallest singular value of the matrix $A'$ by just changing its condition number while keeping its norm constant. 
In other words, the condition number of the OSS does not depend directly on the condition number of matrix $A'$, which is one of its submatrices.
Conversely, the largest singular values of OSS submatrices, i.e., the $\ell_2$ norm of those submatrices, affect the OSS condition number.
Fig.~\ref{fig: norm_A_cond} and Fig.~\ref{fig: norm_b_cond} show that the norm of matrix $A'$ and right-hand side (RHS) vector $b'$ have a direct relationship with the condition number of the OSS.
A similar trend can be observed with vector $c'$.
As shown in Fig.~\ref{fig: pre_cond}, the condition number of the OSS changes with rate $\frac{1}{\zeta}$. 
This observation can be explained by the relationship derived in~\eqref{eq: cond_oss dependence}.

\begin{figure}[] \centering
\subfloat[The smallest singular value of matrix $A'$]{\label{fig: min_sing_A_cond}\includegraphics[width =0.5 \linewidth, trim = {.1cm .1cm .1cm .1cm}, clip]{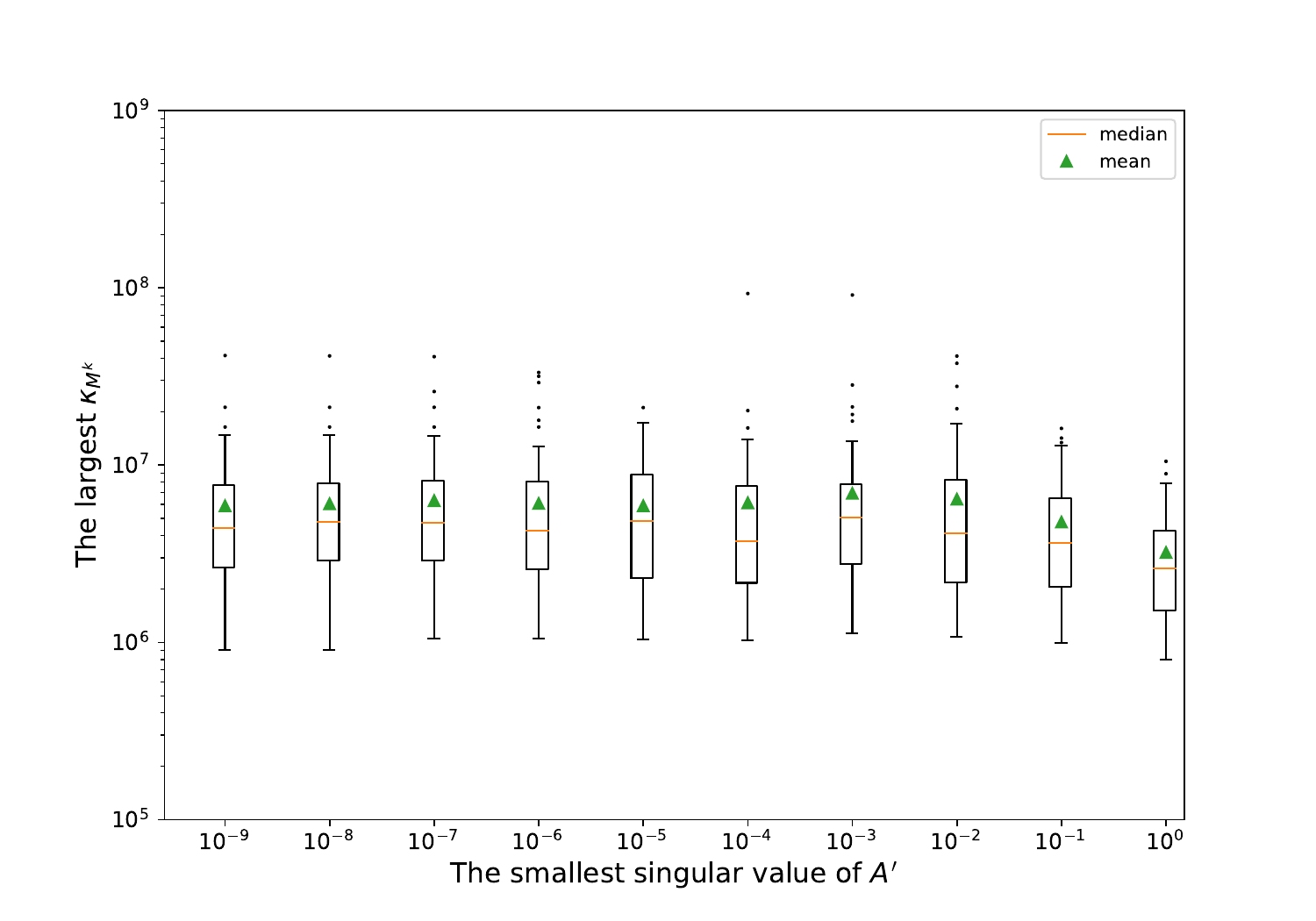}}
\subfloat[The norm of matrix $A'$]{\label{fig: norm_A_cond}\includegraphics[width =0.5 \linewidth, trim = {.1cm .1cm .1cm .1cm}, clip]{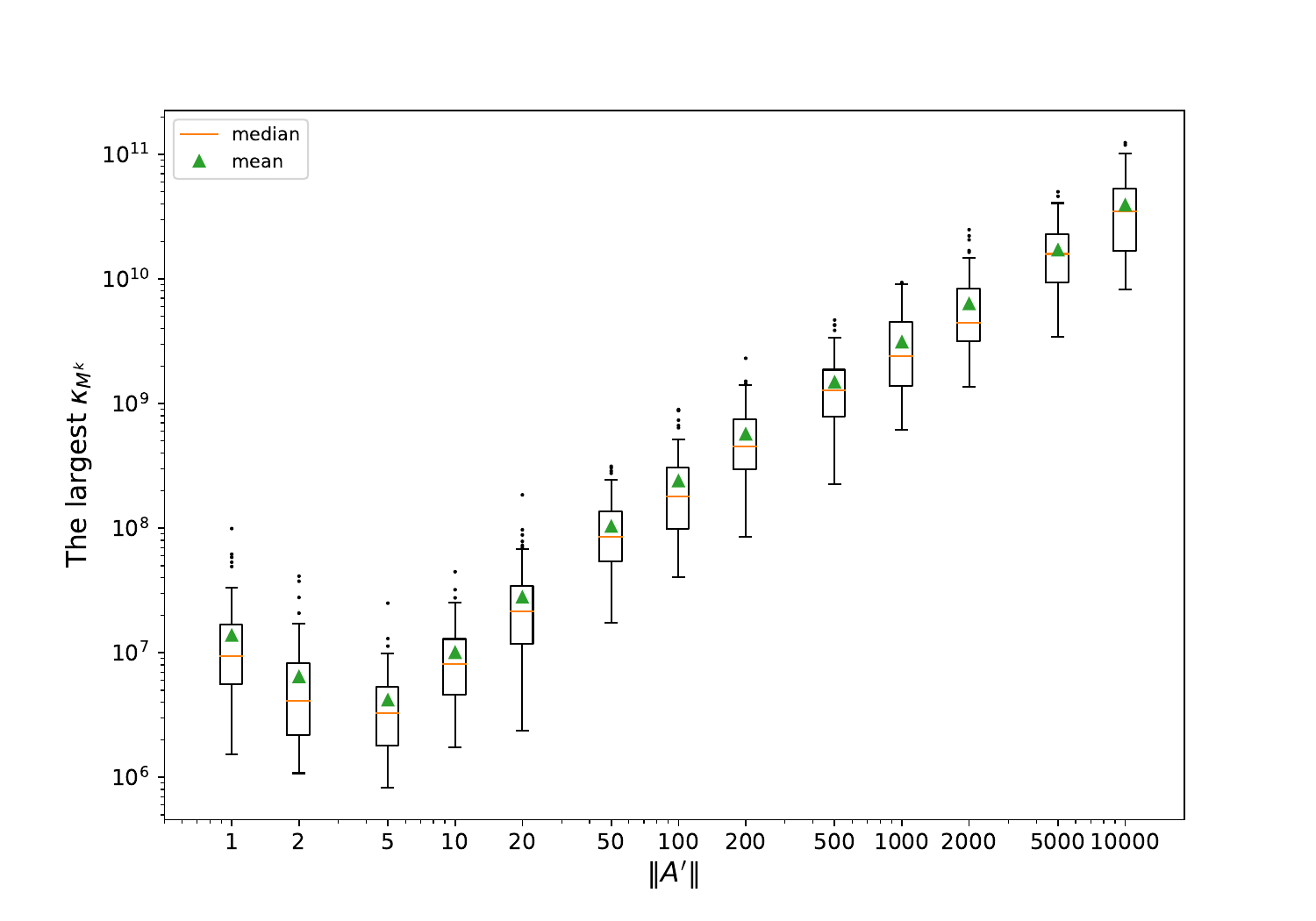}}\\
\subfloat[The desired precision]{\label{fig: pre_cond}\includegraphics[width =0.5 \linewidth, trim = {.1cm .1cm .1cm .1cm}, clip]{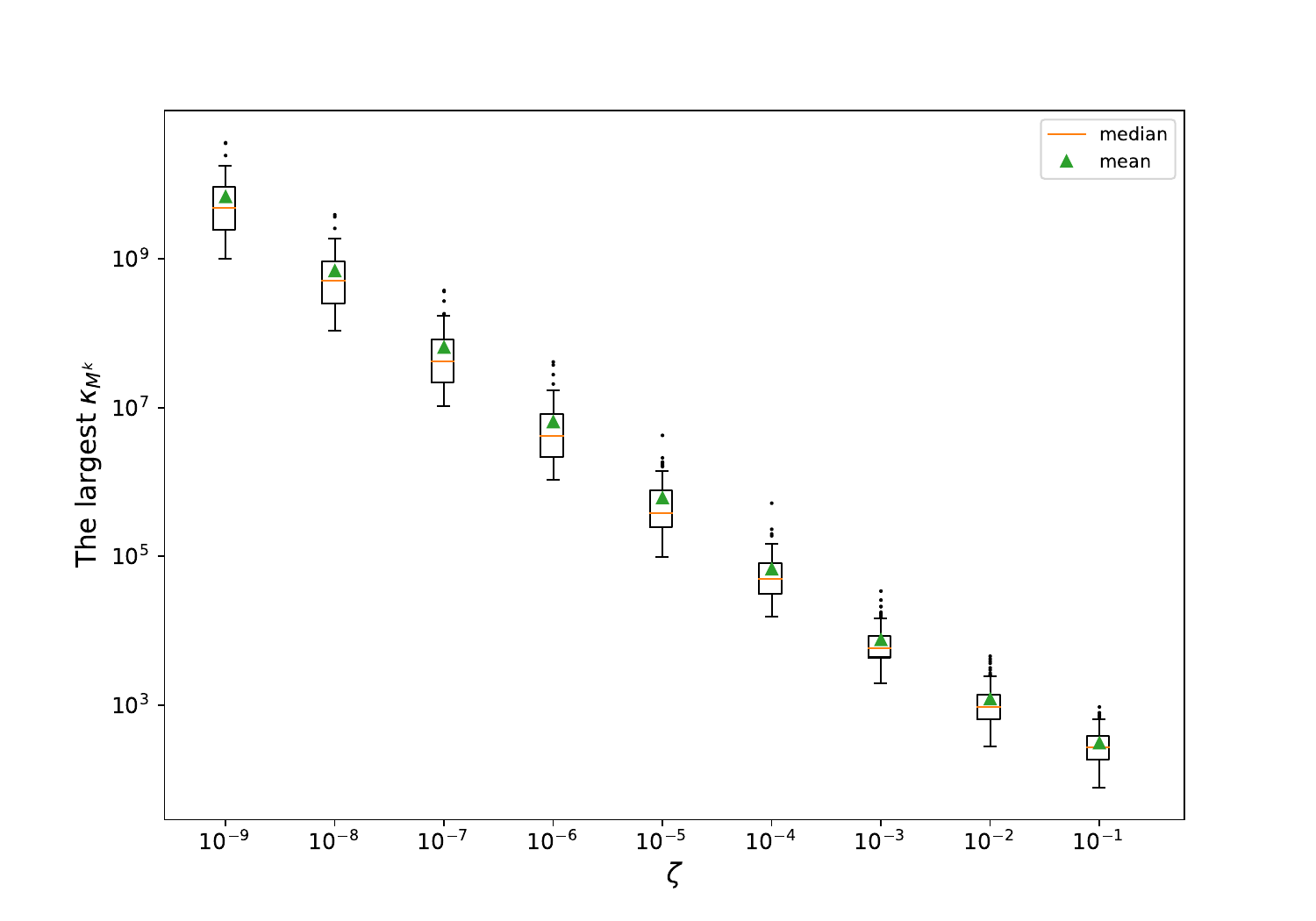}}
\subfloat[The norm of RHS vector $b'$]{\label{fig: norm_b_cond}\includegraphics[width =0.5 \linewidth, trim = {.1cm .1cm .1cm .1cm}, clip]{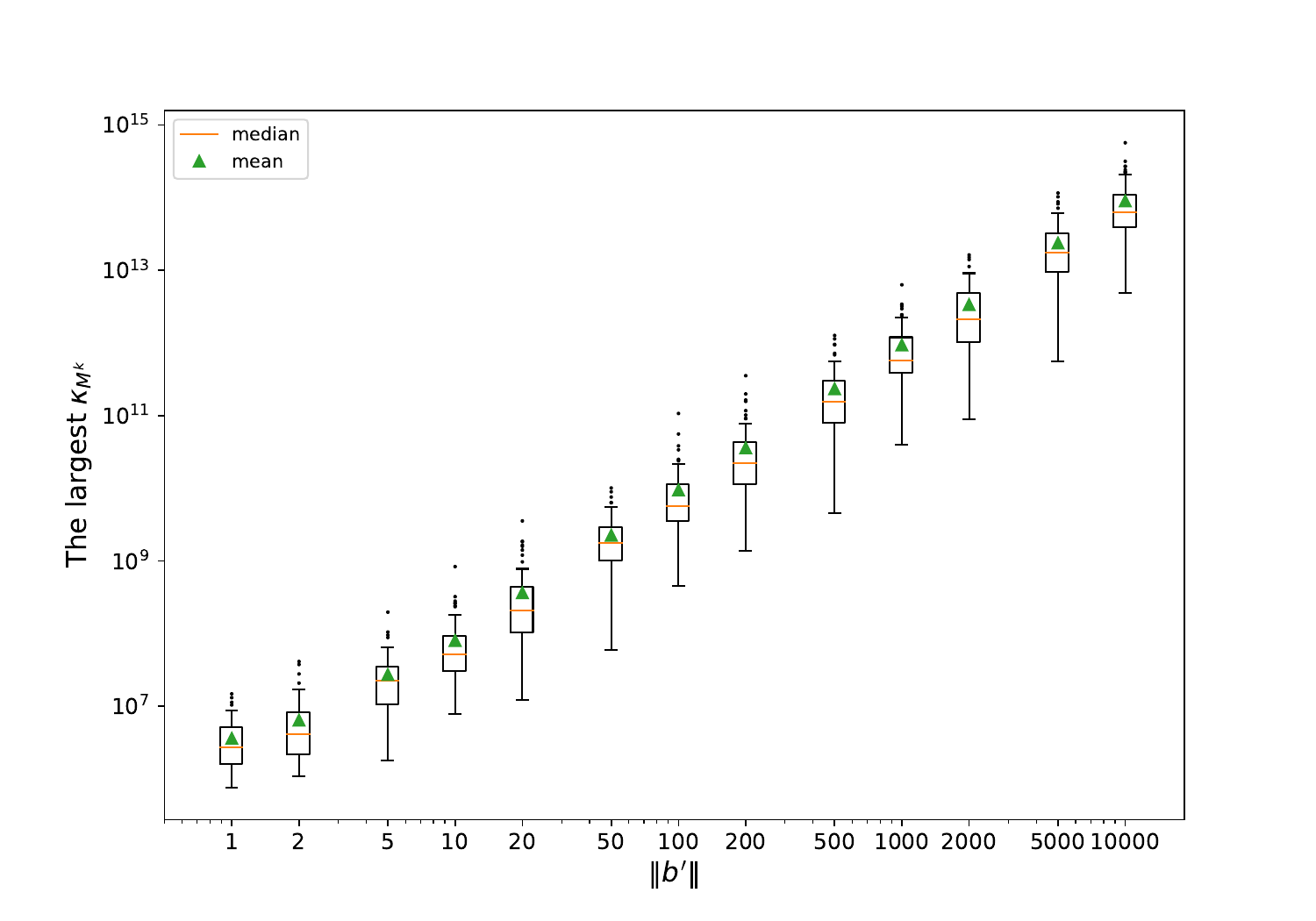}}
\caption{ The effect of different properties on the largest condition number of the OSS in IF-IPM. } \label{fig: largest condition number of the self-dual embedding IF-IPM}
\end{figure}

As we can see in Fig.~\ref{fig: sys} that the condition number of the OSS system is as good as the one for the full Newton system, and better than the Augmented system and Normal Equation system. Also, Figure~\ref{fig: primal_degenerate} verifies that the condition number of the OSS system will go to infinity with rate $\frac{1}{\mu}$ in the worst case, which is better than the Augmented system and Normal Equation System. Figure~\ref{fig: ir_cond_improvement} shows how Iterative Refinement can help to avoid the growing condition number of the Newton system. More precisely, by restarting QIPM in each iteration of IR, the algorithm starts with a system with a relatively low condition number. 
%
\begin{figure}[] \centering
\subfloat[Well-conditoned $A$]{\label{fig: well_cond}\includegraphics[width =0.48 \linewidth, trim = {.1cm .1cm .1cm .1cm}, clip]{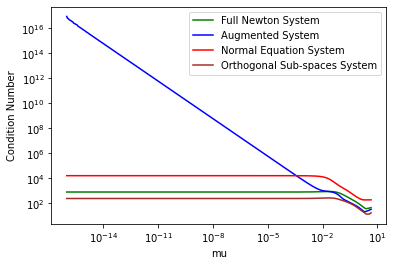}}
\subfloat[Ill-conditoned $A$]{\label{fig: ill_cond}\includegraphics[width =0.48 \linewidth, trim = {.1cm .1cm .1cm .1cm}, clip]{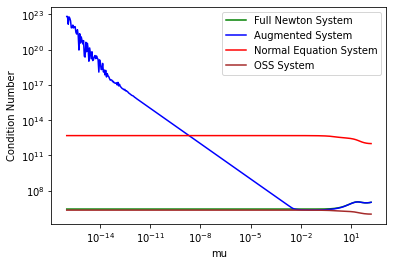}}\\
\subfloat[Primal degenerate problem]{\label{fig: primal_degenerate}\includegraphics[width =0.48 \linewidth, trim = {.1cm .1cm .1cm .1cm}, clip]{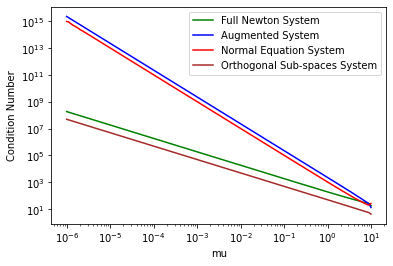}}
\subfloat[Effect of Iterative Refinement]{\label{fig: ir_cond_improvement}\includegraphics[width =0.5 \linewidth, trim = {.1cm .1cm .1cm .1cm}, clip]{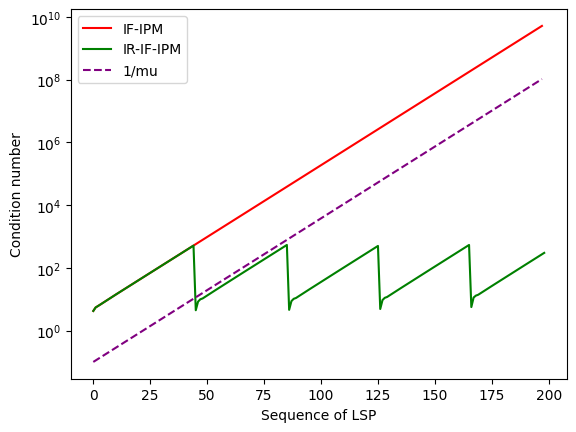}}
\caption{Iterations of IF-IPM with different levels of error.} 
\label{fig: sys}
\end{figure}

To evaluate the performance of the classical IR-IF-IPM, we utilized SciPy’s CG solver and compared its efficiency against IR-IF-IPM with SciPy’s LU decomposition, as well as IR-II-IPM using CG and Cholesky decomposition. Table~\ref{tab:performance} presents the runtime and iteration counts for selected problems from the Netlib library. The results highlight the potential of the proposed IR-IF-IPM in solving large-scale LO instances more efficiently than other inexact IPMs. It is important to note that the quantum version, IR-IF-QIPM, could not solve any Netlib problems on the Qiskit simulator due to the inherent size limitations of the simulator.
\begin{table}[]
    \centering
    \scriptsize
    \renewcommand{\arraystretch}{1.2}
    \resizebox{\textwidth}{!}{\begin{tabular}{lrr|rr|rr|rr|rr}
        \toprule
        \multirow{2}{*}{Name} & \multirow{2}{*}{$m$} & \multirow{2}{*}{$n$} &  \multicolumn{2}{c|}{IR-II-IPM(CGM)} & \multicolumn{2}{c|}{IR-IF-IPM(CGM)} & \multicolumn{2}{c|}{IR-II-IPM(Chol)} & \multicolumn{2}{c}{IR-II-IPM(LU)} \\
        & & &  Iter. & Sec. & Iter. & Sec. & Iter. & Sec. & Iter. & Sec. \\
        \midrule
        25fv47 & 1856 & 3712 &  105 & 1512.84 & 81 & 1026.14 & 103 & 1478.27 & 79 & 1192.39 \\
        adlittle & 19 & 278 &  23 & 0.38 & 22 & 0.33 & 22 & 0.35 & 22 & 0.36 \\
        afiro & 53 & 106 &  16 & 0.06 & 16 & 0.04 & 16 & 0.04 & 16 & 0.04 \\
        agg & 591 & 1182 &  48 & 29.35 & 31 & 15.63 & 44 & 26.38 & 32 & 21.45 \\
        agg2 & 78 & 15 &  44 & 51.71 & 33 & 34.35 & 41 & 47.78 & 32 & 37.92 \\
        agg3 & 758 & 15 &  60 & 70.67 & 38 & 37.25 & 56 & 65.33 & 36 & 47.96 \\
        bandm & 418 & 836 &  53 & 12.66 & 38 & 7.69 & 50 & 12.12 & 37 & 9.39 \\
        beaconfd & 222 & 444 &  25 & 1.20 & 16 & 1.09 & 22 & 1.04 & 20 & 0.82 \\
        blend & 116 & 232 &  20 & 0.28 & 19 & 0.18 & 20 & 0.24 & 19 & 0.20 \\
        bn11 & 1550 & 3100 &  136 & 1164.02 & 87 & 654.98 & 126 & 1081.27 & 83 & 941.94 \\
        \midrule
        Avg. & & &  46.88 & 225.33 & 34.62 & 153.85 & 44.23 & 214.55 & 34.60 & 201.86 \\
        \bottomrule
    \end{tabular}}
    \caption{Performance comparison of IPMs for selected LO problems of Netlib. (Precision $=10^{-6}$)}
    \label{tab:performance}
\end{table}

In conclusion, we examined the potential of the proposed IF-IPM and the challenges of implementing its quantum version on current quantum devices and simulators. Notably, the proposed IR-IF-QIPM is well-suited for fault-tolerant quantum computers. While current NISQ devices are not yet viable for our approach, recent advances have brought fault-tolerant quantum computing closer to realization \cite{rodriguez2024experimental, neven2024meet}. Moreover, GPU-based quantum simulators, such as those on the CUDA-Quantum platform, are emerging as efficient tools for quantum algorithm testing \cite{kim2023cuda}. Future research should explore the practical advantages of IR-IF-QIPMs using these emerging and fast-evolving quantum technologies.

\section{Conclusion} \label{sec: conclusion}
Motivated by the efficient use of QLSA in IPMs, an Inexact Feasible IPM (IF-IPM) is developed with $\Ocal(\sqrt{n}L)$ iteration complexity, analogous to the best exact feasible IPM. 
In terms of classical computing, as well as quantum computing, it is a novel algorithm. 
The improvement in total complexity comes from taking feasible steps using fast but inexact quantum or classical linear solvers. 
We proposed a new linear system, called Orthogonal Spaces System, to generate inexact but feasible Newton steps. 
In consequence, an Inexact Feasible Quantum IPM is developed to solve LO problems nearly better than previous classical and quantum IPMs. 
We analyzed the proposed IF-IPM theoretically and empirically. 
It is necessary to use an iterative refinement scheme to avoid exponential complexity for finding an exact optimal solution using IF-QIPM coupled with QLSAs or IF-IPM with CGMs. 
%


\begin{table}[]
    \centering
\resizebox{\textwidth}{!}{    \begin{tabular}{ |c|c|c| } 
 \hline
 Algorithm & 

 Time Complexity & Comment\\ 
 \hline
Best classical bound & $\Ocal(n^3L)$&\\
Classical II-IPM+CGM \cite{Monteiro2003_Convergence}& $\tilde{\Ocal}_{\phi}\big(n^5L\bar{\chi}^2\big)$&$\bar{\chi}=\max_{B}\{\|A_B^{-1}A\|_F\}$\\
QIPM of~\cite{kerenidisParkas2020_quantum} & 
$\tilde{\Ocal}_{n}\Big(n^{2}L\bar{\kappa}^3 2^{4L}\Big)^*$&\\
QIPM of~\cite{Casares2020_quantum}& 
$\tilde{\Ocal}_{n}\big(n^2L \bar{\kappa}^2 2^{2L}\big)^*$&\\
IR-II-QIPM of~\cite{mohammadisiahroudi2022efficient}& $\tilde{\Ocal}_{n,,\phi,\kappa_{\Ahat}}\big(n^4L\phi\kappa_{\Ahat}^4\big)$&$\phi=\omega^{19}(\|\Ahat\|+\|\bhat\|)$ \\
IR-II-IPM + CGM of~\cite{mohammadisiahroudi2022efficient}& $\tilde{\Ocal}_{\omega,\|\Ahat\|,\|b\|}\big(n^4L\kappa_{\Ahat}\omega^4\big)$& \\
\hline
IR-IF-QIPM &
$\tilde{\Ocal}_{n,\kappa_Q,\omega,\mu^0,L}\big(n^{2.5}L\kappa_Q^2 \|Q\|\omega^5\big)$&\\
IR-IF-IPM + CGM &
$\tilde{\Ocal}_{\mu^0,L}\big(n^{2.5}L\kappa_{Q}\omega^2\big)$&\\
\hline
\end{tabular}}
    \caption{Time complexity of finding the exact solution using different QIPMs ($*$ indicates the time complexity is not attainable)}
    \label{tab:QIPMs complexity}
\end{table}

Table~\ref{tab:QIPMs complexity} indicates that with respect to dimension the best theoretical bound for solving LO problems has been improved for the first time, but this complexity still depends on constants, such as $\kappa_Q$ and $\omega$. 
The proposed IR-IF-QIPM has much better time complexity than IR-II-QIPM for solving LO problems. 
The QIPMs proposed in~\cite{kerenidisParkas2020_quantum, Casares2020_quantum} seem to have a better dependence on $n$. 
However, their time complexities can not be attained since they are based on the premise that QLSAs can provide an exact solution.
This fundamental assumption invalidates the whole convergence of a QIPM algorithm since quantum algorithms are inherently noisy.
All in all, the proposed IR-IF-QIPM has the best complexity among convergent QIPMs and w.r.t the dimension, better complexity than classical IPMs.

As evidenced by the superior complexity of the proposed IR-IF-QIPM, the proposed framework has high adaptability to quantum computers because the noise and error coming from quantum computation are handled efficiently in a way that at each iteration of IPM feasibility of the Newton step is guaranteed, and the noise is added to the duality gap. Consequently, as Figure~\ref{fig: eta_iter} illustrates, the algorithm is highly robust against the noise since at each iteration of the IF-IPM, the Newton direction decreases the optimality gap and addresses the error of the previous direction. On the other hand, the novel OSS system has a better condition number bound, which makes it more adaptable for QLSAs. By augmenting iterative refinement, our approach leads to the best complexity among other QIPMs.

Although this paper studied an application of QLSAs, the proposed method takes advantage of the best iteration complexity of feasible IPMs and the low cost of iterative methods, such as CGM.
The condition number of OSS is increasing more slowly than that of other Newton systems. 
We also employed an iterative refinement scheme using the proposed IF-QIPM with low precision to address both errors of QLSAs and the growing condition number of Newton systems. 
Another promising line of research is to study preconditioning and regularization techniques for the OSS to mitigate the impact of the growing condition number in IF-(Q)IPMs through iterative methods. 
This paper is the first comprehensive approach to develop an inexact but feasible IPMs to solve LO problems. 
This direction can be pursued by modifying the NES to guarantee the feasibility of the inexact solution but taking advantage of small positive definite coefficient matrices. 
The proposed IF-IPM can also be extended to other optimization problems, such as conic and nonlinear optimization problems.

{\hyphenpenalty=100000
\bibliographystyle{dependencies/siamplain}

\bibliography{dependencies/references.bib}}
\end{document}